\DeclareMathOperator{\PG}{PG}
\newcommand{\qbin}[2]{\genfrac{[}{]}{0pt}{}{#1}{#2}}
\newtheorem{definition}{Definition}[section]
\newtheorem{prop}[definition]{Proposition}
\newtheorem{theorem}[definition]{Theorem}
\newtheorem{mtheorem}[definition]{Main Theorem}
\newtheorem{lemma}[definition]{Lemma}
\newtheorem{gevolg}[definition]{Corollary}
\newtheorem{remark}[definition]{Remark}
\newtheorem{example}[definition]{Example}
\newtheorem{notation}[definition]{Notation}
\renewenvironment{proof}[1][\noindent Proof]{{\par\pushQED{\qed}\itshape #1\@. }}{\popQED}
\title{Maximal sets of $k$-spaces pairwise intersecting in at least a $(k-2)$-space}
\author{Jozefien D'haeseleer\thanks{Department of Mathematics: Analysis, Logic and Discrete Mathematics, Ghent University, Krijgslaan 281, Building S8, 9000 Gent, Flanders, Belgium; jozefien.dhaeseleer@ugent.be, leo.storme@ugent.be.}\and Giovanni Longobardi\thanks{Department of Management and Engineering, University of Padua,
Stradella S. Nicola 3,
36100 Vicenza,
Italy; giovanni.longobardi@unipd.it.} \and Ago-Erik Riet\thanks{ Institute of Mathematics and Statistics, University of Tartu, Narva mnt 18, 51009 Tartu, Estonia; agoerik@ut.ee.} \and Leo Storme\footnotemark[1]}\date{}
\begin{document}
\maketitle
\begin{abstract}
 
In this paper, we analyze the structure of maximal sets of  $k$-dimensional spaces in $\PG(n,q)$ pairwise intersecting in at least a $(k-2)$-dimensional space, for $3 \leq k\leq n-2$. We give an overview of the largest examples of these sets with size more than $f(k,q)=\max\{3q^4+6q^3+5q^2+q+1,\theta_{k+1}+q^4+2q^3+3q^2\}$.

\end{abstract}

\section{Introduction and preliminaries}

One of the classical problems in extremal set theory is to determine the size of the largest sets of pairwise non-trivially  intersecting subsets. In 1961, it was solved by Erd{\H{o }}s, Ko and Rado \cite{erdos_ko_rado}, and their result was improved by Wilson in 1984.

\begin{theorem}{\textbf{\cite{wilsonEKR}}}\label{thm1}
	Let $n, k$ and $t$ be positive integers and  suppose that $k \geq t \geq 1$ and $n \geq (t+1)(k-t+1)$. If $\mathcal{S}$ is a family of subsets of size $k$ in a set $\Omega$ with $|\Omega | = n$ , such that the elements of $\mathcal{S}$ pairwise intersect in at least $t$ elements, then $|\mathcal{S}| \leq \binom{n-t}{k-t}$.\\
	Moreover, if $n \geq (t + 1)(k − t + 1) + 1$, then $|\mathcal{S}| = \binom{n-t}{k-t}$ holds if and only if $\mathcal{S}$ is the set of all the subsets of size $k$ through a fixed subset  of $\Omega$ of size $t$.
\end{theorem}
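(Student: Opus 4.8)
The plan is to recast the problem spectrally and apply the Hoffman ratio bound, which is the route that makes the exact threshold $n \geq (t+1)(k-t+1)$ visible. First I would introduce the graph $G$ whose vertices are the $k$-subsets of $\Omega$, with two subsets adjacent precisely when they meet in \emph{fewer} than $t$ elements, i.e.\ in at most $t-1$ elements. A $t$-intersecting family $\mathcal{S}$ is then exactly an independent set of $G$, so the theorem becomes the assertion that $\alpha(G) = \binom{n-t}{k-t}$, together with a description of the maximum independent sets.

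The key structural input is that $G$ lives in the Johnson association scheme $J(n,k)$: writing $A_i$ for the $0/1$ matrix recording the pairs $(A,B)$ with $|A\cap B| = k-i$, one has $G = A_{k-t+1} + \cdots + A_k$. The matrices $A_i$ are simultaneously diagonalizable, with common eigenspaces $V_0, V_1, \dots, V_k$ of dimensions $\dim V_j = \binom{n}{j} - \binom{n}{j-1}$, and the eigenvalue of $A_i$ on $V_j$ is the Eberlein polynomial $E_i(j)$. Hence the eigenvalue of $G$ on $V_j$ is $\lambda_j = \sum_{i=k-t+1}^{k} E_i(j)$, with $\lambda_0 = d$ the common degree (the number of $k$-sets meeting a fixed one in at most $t-1$ points). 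I would then invoke the ratio bound: for a $d$-regular graph with least eigenvalue $\tau$,
$$\alpha(G) \leq |V(G)|\,\frac{-\tau}{d-\tau},$$
and check by direct computation that the right-hand side equals $\binom{n-t}{k-t}$.

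The main obstacle is exactly this eigenvalue analysis: one must identify which eigenspace carries the least eigenvalue $\tau$ and then evaluate the resulting expression in closed form. The decisive fact, and the genuine source of the hypothesis, is that the ratio bound collapses to $\binom{n-t}{k-t}$ precisely when $n \geq (t+1)(k-t+1)$; below that threshold the minimizing eigenspace changes and the canonical $t$-star ceases to be extremal (the Frankl families overtake it). Carrying out the explicit Eberlein polynomial estimates and comparing the competing eigenvalues is the technical heart of the proof, and I expect this to be where almost all of the real work lies.

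For the uniqueness statement under the strict inequality $n \geq (t+1)(k-t+1)+1$, I would use the equality case of the ratio bound: if an independent set $\mathcal{S}$ meets the bound, then its characteristic vector $x$ satisfies $x - \tfrac{|\mathcal{S}|}{|V(G)|}\mathbf{1} \in V_\tau$, where $V_\tau$ is the eigenspace attaining $\tau$. A combinatorial analysis of families whose characteristic vectors lie in this controlled span, exploiting that the strict inequality keeps $\tau$ strictly separated from the other relevant eigenvalues, forces $\mathcal{S}$ to consist of all $k$-subsets through a common $t$-subset. As a consistency check, I would verify that such a canonical $t$-star indeed has $\binom{n-t}{k-t}$ members and is $t$-intersecting, confirming that the bound is attained.
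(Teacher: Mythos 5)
This statement is quoted from Wilson \cite{wilsonEKR}; the paper you are comparing against gives no proof of it at all, so there is no internal argument to match your proposal to. Judged on its own terms, your outline names the right habitat (the Johnson scheme $J(n,k)$ and an eigenvalue bound on independent sets) but it has a concrete gap at exactly the point you defer as ``the technical heart.'' If you take $G = A_{k-t+1} + \cdots + A_k$ literally as a $0/1$ adjacency matrix and apply the Hoffman ratio bound, the resulting bound $|V(G)|\cdot(-\tau)/(d-\tau)$ is \emph{not} equal to $\binom{n-t}{k-t}$ for $t \geq 2$ in general; the unweighted ratio bound is tight for the Kneser graph ($t=1$) but fails to recover the exact EKR bound for larger $t$. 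This is precisely why Wilson's paper is nontrivial: his argument constructs a specific \emph{weighted} combination $\sum_i \alpha_i A_i$ in the Bose--Mesner algebra (equivalently, a carefully chosen feasible solution to the Delsarte linear program, built from the inclusion matrices $W_{t,k}$), chosen so that the eigenvalue on $V_j$ is nonnegative for $0 < j \leq t$ and bounded below by a common negative value for $j > t$, and it is the positive-semidefiniteness of that matrix --- not the least eigenvalue of the unweighted graph --- that produces the bound and makes the threshold $n \geq (t+1)(k-t+1)$ appear. Without specifying those weights and verifying the eigenvalue inequalities, the plan as written would not close.

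The uniqueness step has the same character: the correct statement is that equality forces the characteristic vector of $\mathcal{S}$ into $V_0 \oplus V_1 \oplus \cdots \oplus V_t$, and one must then prove that a $t$-intersecting family whose characteristic vector lies in this module is a $t$-star when $n \geq (t+1)(k-t+1)+1$. That is a genuine combinatorial-algebraic argument (it is where the strict inequality is used, to exclude the Frankl families $\{A : |A \cap T| \geq t+i\}$ which are extremal at the boundary), and your sketch asserts the conclusion rather than deriving it. In short: the strategy is the historically correct one, but both halves of the proof are missing the computations that constitute them, and the specific unweighted ratio bound you propose to compute would not yield the stated bound.
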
 
In \cite[Theorem 2.1.2]{wilsonEKR}, Wilson also showed that the upper bound in the previous theorem is sharp. Note that if $t= 1$, then $\mathcal{S}$ is a collection of subsets of size $k$ of an arbitrary set, which are pairwise not disjoint. In the literature, a family of subsets that are pairwise not disjoint, is called an \textit{Erd{\H{o}}s-Ko-Rado set} and the classification of the largest Erd{\H{o}}s-Ko-Rado sets is called the \textit{Erd{\H{o}}s-Ko-Rado problem}, in short \emph{EKR problem}.   Hilton and Milner \cite{hilton_minler} described the largest  Erd{\H{o}}s-Ko-Rado sets $S$ with the property that there is no point contained in all elements of $S$.

This set-theoretical problem can be generalized in a natural way to many other structures such as designs \cite{EKRdesign}, permutation groups \cite{EKRpermutation} and projective geometries. In this article, we work in the projective setting (for an overview, see \cite{surveyEKR}); where this problem is known as the \textit{$q$-analogue} of the Erd{\H{o}}s-Ko-Rado problem.

More precisely, let $q$ be a prime power and let $\PG(n,q)$ be the projective geometry of the subspaces of the vector space $\mathbb{F}_q^{n+1}$ over the finite field $\mathbb{F}_q$. Clearly, results on families of vector spaces pairwise intersecting in at least a  vector space with fixed dimension can be interpreted in projective spaces, and vice versa. Here, in the projective setting, a projective $m$-dimensional subspace of $\PG(n,q)$ will be called $m$-\textit{space}.
In $\PG(n,q)$, we can consider families of $k$-spaces  pairwise intersecting in at least a $t$-dimensional subspace  for $0 \leq t \leq k-1$. In particular for $t=0$, these sets are the Erd{\H{o}}s-Ko-Rado sets of $\PG(n,q)$

Before stating the $q$-analogue of Theorem \ref{thm1}, we briefly recall the definition of $q$\textit{-ary Gaussian coefficient}.

\begin{definition}
	Let $q$ be a prime power,  let $n,k$ be  non-negative integers with $k \leq n $. The $q$\textit{-ary Gaussian
	coefficient} of $n$ and $k$ is defined by
\begin{equation*}
\qbin{n}{k}_q=
\begin{cases} 
\frac{(q^n-1)\cdots(q^{n-k+1}-1)}{(q^k-1)\cdots(q-1)}  \hspace{0.5cm}\textnormal{ if $k>0$} \\
\hspace{1.5cm} 1 \hspace{2.5cm}\textnormal{otherwise} 

\end{cases}
\end{equation*}
\end{definition}
We will write $\qbin{n}{k}$, if the field size $q$ is clear from the context. The number of $k$-spaces in $\PG(n,q)$ is $\qbin{n+1}{k+1}$  and the number of $k$-spaces through a fixed $t$-space  in $\PG(n,q)$, with $0 \leq t \leq k$, is $\qbin{n-t}{k-t}$.
Moreover, we will denote the number $\qbin{n+1}{1}$ by the symbol $\theta_n$.

\begin{theorem}{\textbf{\cite[Theorem 1]{q-analogueEKR}}}\label{q-analogue}
Let $t$ and $k$ be integers, with $0 \leq t \leq k$. Let $\mathcal{S}$ be a set of $k$-spaces
in $\PG(n,q)$, pairwise intersecting in at least a $t$-space.
\begin{itemize}
	\item [(i)] If $n \geq 2k + 1$, then $| \mathcal{S} |
	\leq \qbin{n-t}{k-t}$. Equality holds if and only if $\mathcal{S}$ is the set of all the $k$-spaces, containing a fixed $t$-space of $\PG(n,q)$, or $n = 2k +1$ and $\mathcal{S}$ is the set of all the $k$-spaces in a fixed $(2k -t)$-space.
	\item[(ii)] If $2k-t \leq n \leq 2k$, then $|\mathcal{S}| \leq \qbin{2k-t+1}{k-t}$. Equality holds if and only if $\mathcal{S}$ is the set of all the $k$-spaces in a fixed $(2k-t)$-space.
\end{itemize}
\end{theorem}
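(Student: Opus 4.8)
The plan is to establish the upper bounds first and then treat the equality cases, splitting throughout according to the two regimes $n \ge 2k+1$ and $2k-t \le n \le 2k$. The starting observation is the dimension formula: any two $k$-spaces of $\PG(n,q)$ meet in at least a $(2k-n)$-space, so the intersection hypothesis is automatically satisfied once $n \le 2k-t$ and becomes genuinely restrictive only when $n > 2k-t$. It is worth recording at the outset that the two candidate extremal families have the advertised sizes: the $t$-pencil of all $k$-spaces through a fixed $t$-space contains $\qbin{n-t}{k-t}$ members, while the subspace family of all $k$-spaces inside a fixed $(2k-t)$-space contains $\qbin{2k-t+1}{k+1}=\qbin{2k-t+1}{k-t}$ members. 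These two numbers coincide precisely at $n=2k+1$, which explains why both families appear as extremal in (i) exactly on that boundary.

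For the upper bound I would model $\mathcal{S}$ as a coclique in the graph $\Gamma$ on all $k$-spaces of $\PG(n,q)$ in which two $k$-spaces are adjacent when they meet in at most a $(t-1)$-space, so that $|\mathcal{S}| \le \alpha(\Gamma)$. This graph is a union of relations in the Grassmann ($q$-Johnson) association scheme, whose spectrum is known explicitly: the eigenvalues are the $q$-Eberlein numbers, expressible as alternating sums of Gaussian coefficients. The main computational step is then to identify the least eigenvalue $\lambda_{\min}$ of $\Gamma$ and to apply the Hoffman ratio bound $\alpha(\Gamma) \le N \cdot \frac{-\lambda_{\min}}{d-\lambda_{\min}}$, where $N=\qbin{n+1}{k+1}$ is the number of vertices and $d$ the valency. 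After simplification this collapses to $\qbin{n-t}{k-t}$ when $n\ge 2k+1$, giving (i), and to $\qbin{2k-t+1}{k-t}$ when $2k-t\le n\le 2k$, giving (ii).

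For the equality discussion I would invoke the tightness criterion of the ratio bound: if $|\mathcal{S}|$ meets the bound, then $\chi_{\mathcal{S}}-\frac{|\mathcal{S}|}{N}\mathbf{1}$ lies in the $\lambda_{\min}$-eigenspace, which forces strong regularity — in particular every $k$-space outside $\mathcal{S}$ is adjacent to a constant number of members, and $\mathcal{S}$ induces an equitable partition. The remaining, and genuinely harder, task is to pass from this regularity to the geometric classification. Here I would argue by induction on $t$: if the members of $\mathcal{S}$ share a common point $P$, one quotients by $P$ to obtain a family of $(k-1)$-spaces in $\PG(n-1,q)$ pairwise meeting in at least a $(t-1)$-space, to which the induction hypothesis applies and which lifts back to a $t$-pencil; the base case $t=0$ is the $q$-analogue of the Erd\H{o}s--Ko--Rado classification for projective spaces.

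The main obstacle is the case where $\mathcal{S}$ has no common point. This is the Hilton--Milner-type heart of the argument: one must show that a family attaining the bound yet lacking a common point can occur only on the boundary $n=2k+1$, where it is exactly the subspace family, and is otherwise impossible. I expect this to require a careful local analysis — fixing a member $A\in\mathcal{S}$, studying how the $t$-spaces of $A$ are used by the remaining members, and combining the equitable-partition data coming from the ratio bound with counting of incident $t$-spaces and $(t+1)$-spaces — in order to exclude the ``mixed'' configurations and pin down precisely the $t$-pencil and the dual subspace family.
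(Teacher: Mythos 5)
This statement is quoted from the literature (Frankl--Wilson, \cite[Theorem 1]{q-analogueEKR}); the paper states it without proof, so there is no internal argument to compare your attempt against. Judged on its own, your outline follows the general strategy that is in fact used in the literature for this theorem (an eigenvalue/ratio-bound argument in the Grassmann association scheme), so the overall plan is sound, but as written it is not a proof: it has two concrete gaps.

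First, the entire quantitative content is compressed into the sentence ``after simplification this collapses to $\qbin{n-t}{k-t}$.'' Identifying the least eigenvalue of the union of relations $\{(U,V):\dim(U\cap V)\le t-1\}$ from the $q$-Eberlein numbers, and verifying that the Hoffman ratio $N\cdot\frac{-\lambda_{\min}}{d-\lambda_{\min}}$ equals the claimed bound, is the technical core of the theorem and is nowhere carried out; moreover, in the regime $2k-t\le n\le 2k$ it is not at all clear that the ratio bound is tight for the graph you define (the extremal family there is a subspace family, not a pencil, and the standard ratio-bound computation is calibrated to the pencil), so part (ii) may require a different argument than the one you sketch. Second, the characterization of equality is reduced to an induction on $t$ whose base case you outsource to the $t=0$ EKR classification and whose crucial step --- excluding, for $n\ge 2k+2$, an extremal family with no common point, and showing that at $n=2k+1$ the only alternative is the family of all $k$-spaces in a $(2k-t)$-space --- is explicitly deferred (``I expect this to require a careful local analysis''). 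That deferred step is precisely the Hilton--Milner-type heart of the classification, so the equality part of both (i) and (ii) remains unproved in your write-up.
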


\begin{gevolg}\label{corollEKR}
	 Let $\mathcal{S}$ be an Erd\H{o}s-Ko-Rado set of $k$-spaces in $\PG(n, q)$. If  $n \geq  2k + 1$, then
$|\mathcal{S}| \leq \qbin{n}{k}$. Equality holds if and only if $\mathcal{S}$ is the set of all the $k$-spaces, containing a fixed point of $\PG(n,q)$, or $n = 2k +1$ and $\mathcal{S}$ is the set
of all the $k$-spaces in a fixed hyperplane.
\end{gevolg}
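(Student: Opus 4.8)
The plan is to recognize that the Erd\H{o}s--Ko--Rado condition, namely that the $k$-spaces of $\mathcal{S}$ pairwise meet non-trivially, is exactly the statement that they pairwise intersect in at least a $0$-space. Hence an Erd\H{o}s--Ko--Rado set is nothing but a family of $k$-spaces satisfying the hypothesis of Theorem \ref{q-analogue} with $t = 0$, and the entire corollary should follow by substituting $t = 0$ into that theorem.

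Concretely, I would first invoke Theorem \ref{q-analogue}(i) with $t = 0$. Since $n \geq 2k+1$, the bound given there is $|\mathcal{S}| \leq \qbin{n-t}{k-t} = \qbin{n}{k}$, which is precisely the desired inequality. For the equality case, the theorem asserts that equality holds if and only if $\mathcal{S}$ consists of all $k$-spaces through a fixed $t$-space, or $n = 2k+1$ and $\mathcal{S}$ consists of all $k$-spaces inside a fixed $(2k-t)$-space.

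The only points that require translation from the general statement are the following. With $t = 0$, a fixed $t$-space is a fixed $0$-space, i.e.\ a point, which yields the first extremal configuration. Likewise, with $t = 0$ the fixed $(2k-t)$-space becomes a $(2k)$-space; and when $n = 2k+1$, a $(2k)$-space is precisely a hyperplane of $\PG(2k+1,q)$, yielding the second extremal configuration. Since there is no genuine obstacle beyond this substitution and the matching of terminology, the corollary follows at once. I do not expect any hard step here: the whole content is already contained in Theorem \ref{q-analogue}, and this corollary merely records the combinatorially most natural special case $t=0$ for convenient later reference.
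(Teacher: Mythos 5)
Your proposal is correct and matches the paper's intent exactly: the paper states this as an immediate corollary of Theorem \ref{q-analogue} with no separate proof, and your substitution $t=0$ (a $0$-space is a point; a $(2k)$-space in $\PG(2k+1,q)$ is a hyperplane) is precisely the translation required.
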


Note that in Theorem \ref{q-analogue} the condition $n \geq 2k -t$ is not a restriction, since any two $k$-dimensional subspaces in $\PG(n,q)$, with $n \leq 2k-t$, meet in at least a $t$-dimensional subspace.
Furthermore, as new families of any size can be found by deleting elements, the research is focused on
\textit{maximal} families: these are sets of $k$-spaces pairwise intersecting in at least a $t$-space, not extendable to a larger family of $k$-spaces with the same property. Related to this question, we report the $q$-analogue of the Hilton-Milner result on the second-largest maximal Erd{\H{o}}s-Ko-Rado sets of subspaces in a finite projective space, due to Blokhuis \textit{et al}. In the context of projective spaces, a set of subspaces  through a fixed $t$-space will be called a \emph{$t$-pencil}, and, in particular, a \textit{point-pencil} if $t=0$ and a \textit{line-pencil} if $t=1$.\\

\begin{theorem}{\textbf{\cite[Theorem 1.3, Proposition 3.4]{q-analogue_hilton}}}\label{q-analogue_Hilton}
Let  $\mathcal{S}$ be a maximal set of pairwise intersecting $k$-spaces in $\PG(n,q)$, with $n \geq 2k + 2$, $k \geq 2$ and $q \geq 3$ (or
$n \geq 2k + 4$, $k \geq 2$ and $q = 2$). If $\mathcal{S}$ is not a point-pencil, then
\[ |\mathcal{S} |\leq \qbin{n}{k}-q^{k(k+1)}\qbin{n-k-1}{k}+q^{k+1}.\]
Moreover, if equality holds, then
\begin{itemize}
	\item [(i)] either $\mathcal{S}$ consists of all the $k$-spaces through a fixed point
	P, meeting a fixed $(k+1)$-space $\tau$, with $P \in \tau$, in a $j$-space, $j \geq 1$, and all the $k$-spaces in $\tau$;
	\item [(ii)]or else $k = 2$ and $\mathcal{S}$ is the set of all the planes meeting a fixed plane $\pi$ in at
	least a line.
\end{itemize}
\end{theorem}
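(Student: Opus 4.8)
The plan is to adapt the classical Hilton--Milner argument to the projective setting by means of a dichotomy on the maximum point-degree of $\mathcal{S}$. For a point $P$, write $d(P)$ for the number of members of $\mathcal{S}$ through $P$; since $\mathcal{S}$ is not a point-pencil, no point lies on all of them, so $d(P)<|\mathcal{S}|$ for every $P$. Let $\Delta=\max_P d(P)$, attained at a point $P_0$. First I would record the crude bound obtained by fixing any $F_0\in\mathcal{S}$: every other member meets $F_0$ in at least a point, so
\[ |\mathcal{S}|\le 1+\sum_{P\in F_0}\bigl(d(P)-1\bigr)\le 1+\theta_k(\Delta-1), \]
where $\theta_k$ is the number of points of a $k$-space. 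Choosing the threshold $\Delta_0$ so that $1+\theta_k(\Delta_0-1)$ equals the claimed value, this immediately disposes of the ``small-degree'' regime $\Delta<\Delta_0$: there $|\mathcal{S}|$ is strictly below the Hilton--Milner bound and there is nothing to prove.

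The substance lies in the case $\Delta\ge\Delta_0$. Here I would split $\mathcal{S}=\mathcal{S}_0\cup\mathcal{S}_1$, where $\mathcal{S}_0$ consists of the members through $P_0$ and $\mathcal{S}_1=\mathcal{S}\setminus\mathcal{S}_0\ne\emptyset$. Each $F\in\mathcal{S}_1$ avoids $P_0$ yet must meet every element of $\mathcal{S}_0$. Passing to the quotient $\PG(n,q)/P_0\cong\PG(n-1,q)$, the members of $\mathcal{S}_0$ become $(k-1)$-spaces and $F$ projects to a $k$-space not through the image of $P_0$. Because $\Delta$ is large, $\mathcal{S}_0$ accounts for almost all $(k-1)$-spaces of the quotient; and since $n\ge 2k+2$ leaves room for a $k$-space and a $(k-1)$-space to be skew, the ``holes'' missing from the full pencil through $P_0$ are tightly constrained to be exactly the projections skew to the members of $\mathcal{S}_1$. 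The key step is then to show that these holes, together with the requirements that the members of $\mathcal{S}_1$ pairwise intersect and each meets every non-hole, force all of $\mathcal{S}_1$ to lie inside a single $(k+1)$-space $\tau$ through $P_0$. This pins $\mathcal{S}_0$ down as the $k$-spaces through $P_0$ meeting $\tau$ in at least a line, of which there are $\qbin{n}{k}-q^{k(k+1)}\qbin{n-k-1}{k}$, and $\mathcal{S}_1$ as the $\theta_{k+1}-\theta_k=q^{k+1}$ members lying in $\tau$ off $P_0$, giving both the bound and configuration (i).

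For the equality discussion I would first verify directly that family (i) is intersecting, maximal, and attains the bound, and then retrace the inequalities of the large-degree case to confirm that equality forces precisely this configuration, up to one degeneracy. When $k=2$ the skew-space analysis in the quotient degenerates and the holes may instead be organised around a plane rather than a point, producing the exceptional family (ii) of all planes meeting a fixed plane $\pi$ in at least a line; I would check separately that it too meets the bound. The hypotheses $n\ge 2k+2$ and $q\ge 3$ enter exactly in the skewness counts and in the gap between $1+\theta_k(\Delta-1)$ and the target value; for $q=2$ these auxiliary estimates are weaker, which is why the hypothesis is strengthened to $n\ge 2k+4$, and I would redo the relevant counts under that stronger assumption.

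The hard part will be the structural step of the large-degree case: upgrading ``each deviating member meets almost the entire pencil through $P_0$'' into the rigid conclusion that the deviating members all live in one $(k+1)$-space $\tau$. This demands controlling which pencil elements can be simultaneously skew to several members of $\mathcal{S}_1$ and how the members of $\mathcal{S}_1$ intersect one another; it is precisely here that the exact quantity $q^{k(k+1)}\qbin{n-k-1}{k}$ is forced, that the exceptional family (ii) emerges for $k=2$, and that the $q=2$ complications are concentrated.
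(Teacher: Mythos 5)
This theorem is quoted in the paper from Blokhuis \emph{et al.} \cite{q-analogue_hilton} without proof, so there is no in-paper argument to compare against; your proposal must stand on its own, and as written it has two genuine gaps. First, the quantitative claim driving your large-degree case is false. The threshold $\Delta_0$ determined by $1+\theta_k(\Delta_0-1)=\qbin{n}{k}-q^{k(k+1)}\qbin{n-k-1}{k}+q^{k+1}$ is of order $q^{(k-1)(n-k)}$ (the right-hand side counts, up to the $+q^{k+1}$, the $k$-spaces through a point meeting a fixed $(k+1)$-space in at least a line, which is roughly $\theta_k\cdot\qbin{n-1}{k-1}\sim q^{(k-1)(n-k)+k}$, and you divide by $\theta_k\sim q^k$). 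The full pencil of $(k-1)$-spaces in the quotient $\PG(n-1,q)$ has $\qbin{n}{k}\sim q^{k(n-k)}$ elements, so $\Delta\ge\Delta_0$ does \emph{not} imply that $\mathcal{S}_0$ ``accounts for almost all'' of that pencil; it could still be a vanishing fraction of it. The inequality you actually need, $|\mathcal{S}_0|\le\qbin{n}{k}-q^{k(k+1)}\qbin{n-k-1}{k}$, does hold, but for a different reason: a single $F\in\mathcal{S}_1$ already forces $\mathcal{S}_0$ to omit, in the quotient, every $(k-1)$-space skew to the projected $k$-space $\bar F$, and the number of such skew $(k-1)$-spaces in $\PG(n-1,q)$ is exactly $q^{k(k+1)}\qbin{n-k-1}{k}$; this needs $n\ge 2k+1$ and no degree hypothesis at all.

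Second, and more seriously, the entire content of the theorem --- the bound $|\mathcal{S}_1|\le q^{k+1}$, equivalently that the members avoiding the maximum-degree point $P_0$ all lie in a single $(k+1)$-space $\tau$ through $P_0$ (or, for $k=2$, organise around a plane to give configuration (ii)) --- is exactly the step you defer as ``the hard part'' without supplying an argument. Nothing in the proposal explains how the pairwise intersection of the members of $\mathcal{S}_1$, together with each of them meeting all of $\mathcal{S}_0$, is converted into this rigid structure; nor where the maximality of $d(P_0)$ is actually used; nor where the hypotheses $q\ge 3$ versus $n\ge 2k+4$ for $q=2$ enter beyond the remark that ``the estimates are weaker.'' What you have is a plausible outline of the Hilton--Milner strategy with the decisive lemma missing, so it does not yet constitute a proof of the stated bound or of the equality characterisation.
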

The Erd{\H{o}}s-Ko-Rado problem for $k = 1$ has been solved completely. Indeed, in $\PG(n,q)$ with $n \geq 3$, a maximal Erd{\H{o}}s-Ko-Rado set of lines is either the set of all the lines through a fixed point or the set of all the lines contained in a fixed plane. It is possible to generalize this result for a maximal family $\mathcal{S}$ of $k$-spaces, pairwise intersecting in a $(k-1)$-space, in a projective space $\PG(n,q)$, $n \geq k +2$. Precisely
\begin{theorem}{\textbf{\cite[Section 9.3]{brouwer}}}\label{basisEKRthm}
Let $\mathcal{S}$ be a set of projective $k$-spaces, pairwise intersecting in a $(k-1)$-space in $\PG(n,q)$, $n\geq k+2$, then all the $k$-spaces of $\mathcal{S}$ go through a fixed $(k-1)$-space or they are contained in a fixed $(k + 1)$-space.
\end{theorem}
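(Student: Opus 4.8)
The plan is to reduce everything to a purely local dichotomy coming from just two members of $\mathcal{S}$, and then bootstrap it to the whole family. Assume $|\mathcal{S}| \geq 2$ (otherwise there is nothing to prove) and fix two distinct elements $\pi_1, \pi_2 \in \mathcal{S}$. Since two distinct $k$-spaces meet in at most a $(k-1)$-space, the hypothesis forces $\sigma := \pi_1 \cap \pi_2$ to be exactly a $(k-1)$-space, and by the dimension formula $\Pi := \langle \pi_1, \pi_2\rangle$ is a $(k+1)$-space. These are the two candidate objects: $\sigma$ for the first alternative and $\Pi$ for the second.

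First I would establish the key local lemma, which holds verbatim for any two distinct members of $\mathcal{S}$: every $\pi \in \mathcal{S}$ either contains $\sigma$ or is contained in $\Pi$. To see this, set $a = \pi \cap \pi_1$ and $b = \pi \cap \pi_2$, two $(k-1)$-spaces (hyperplanes) of $\pi$. If $a = b$, then this common space lies in $\pi_1 \cap \pi_2 = \sigma$ and, having dimension $k-1$, equals $\sigma$; hence $\sigma \subseteq \pi$. If $a \neq b$, then two distinct hyperplanes of the $k$-space $\pi$ span $\pi$, so $\pi = \langle a, b\rangle$; since $a \subseteq \pi_1 \subseteq \Pi$ and $b \subseteq \pi_2 \subseteq \Pi$, we conclude $\pi \subseteq \Pi$.

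With the lemma in hand, I would split into two cases. If every member of $\mathcal{S}$ contains $\sigma$, we are in the first alternative and done. Otherwise there is a member $\pi_3$ with $\sigma \not\subseteq \pi_3$, and the lemma gives $\pi_3 \subseteq \Pi$. Now take an arbitrary $\pi_4 \in \mathcal{S}$ and suppose, for contradiction, that $\pi_4 \not\subseteq \Pi$. Applying the lemma to the pair $\pi_1, \pi_2$ forces $\sigma \subseteq \pi_4$; applying it to the pair $\pi_1, \pi_3$ (whose span $\langle \pi_1, \pi_3\rangle$ is again a $(k+1)$-space contained in, hence equal to, $\Pi$) forces $\sigma' := \pi_1 \cap \pi_3 \subseteq \pi_4$, since the alternative branch $\pi_4 \subseteq \langle \pi_1,\pi_3\rangle = \Pi$ is excluded by our standing assumption. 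The two hyperplanes $\sigma, \sigma'$ of $\pi_1$ are distinct — equality would give $\sigma \subseteq \pi_3$, contradicting the choice of $\pi_3$ — so they span $\pi_1$, whence $\pi_1 = \langle \sigma, \sigma'\rangle \subseteq \pi_4$ and therefore $\pi_4 = \pi_1 \subseteq \Pi$, a contradiction. Thus every member lies in $\Pi$, giving the second alternative.

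The argument is elementary and I do not expect a genuine obstacle beyond bookkeeping; the one step that deserves care is the reapplication of the local lemma in the second case, where I must verify that $\langle \pi_1, \pi_3\rangle$ really coincides with $\Pi$ (so that the two ``contained in'' conclusions agree) and that $\sigma \neq \sigma'$, which is exactly what rules out the degenerate possibility and drives the contradiction. I would also note at the outset that for $n = k+1$ the second alternative is automatic, so the substance of the theorem lies in the range $n \geq k+2$, matching the hypothesis.
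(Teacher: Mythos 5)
Your argument is correct. Note first that the paper itself gives no proof of this statement: it is quoted as a known classical result with a citation to Brouwer--Cohen--Neumaier, so there is nothing internal to compare against. Your proof is the standard one and is sound: the local dichotomy (each $\pi\in\mathcal{S}$ either contains $\sigma=\pi_1\cap\pi_2$ or lies in $\Pi=\langle\pi_1,\pi_2\rangle$, because the two hyperplanes $\pi\cap\pi_1$ and $\pi\cap\pi_2$ of $\pi$ either coincide, forcing them to equal $\sigma$, or span $\pi$ inside $\Pi$) is exactly the right key lemma, and the bootstrap in the second case correctly uses that $\langle\pi_1,\pi_3\rangle=\Pi$ by dimension and that $\sigma\neq\sigma'$ to force $\pi_4=\pi_1$, a contradiction. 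The only cosmetic gaps are that the lemma should formally exclude $\pi\in\{\pi_1,\pi_2\}$ (where the conclusion is trivial anyway, since $\pi\cap\pi_1$ would then be $k$-dimensional rather than a hyperplane of $\pi$), and that, as you yourself observe, the hypothesis $n\geq k+2$ is never actually needed in the argument --- it only serves to make the dichotomy non-degenerate.
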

The Erd{\H{o}}s-Ko-Rado problem for sets of projective planes is trivial if $n \leq 4$.  For $n = 5$, Blokhuis, Brouwer and Sz\H{o}nyi  classified the six largest examples \cite[Section 6]{kneser}.\\
De Boeck investigated the maximal Erd\H{o}s-Ko-Rado sets of planes in $\PG(n, q)$ with $n  \geq 5$, see \cite{EKRplanes}. He characterized those sets with sufficiently large size and showed that they belong to one of the 11 known examples, explicitly described in his work.\\


In \cite{precisek-2,(k-2)-spaces}, a classification of the largest examples of sets of $k$-spaces in PG$(n,q)$ pairwise intersecting in precisely a $(k-2)$-space is given. In \cite{012clique}, Brouwer and Hemmeter investigated sets of generators, pairwise intersecting in at least a space with codimension 2, in
quadrics and symplectic polar spaces. 
In this paper, we will study the projective analogue of this question. Let $f(k,q)=\max\{3q^4+6q^3+5q^2+q+1,\theta_{k+1}+q^4+2q^3+3q^2\}$ and so \begin{align*}
    f(k,q)=\begin{cases}
    3q^4+6q^3+5q^2+q+1 \ \text{    if } k=3, q\geq 2 \text{    or } k=4, q=2 \\
    \theta_{k+1}+q^4+2q^3+3q^2 \ \ \text{    else. }
    \end{cases}
\end{align*} We analyze the sets of $k$-spaces in $\PG(n,q)$ pairwise intersecting in at least a $(k-2)$-space and with more than $f(k,q)$ elements.  We will suppose that these sets $\mathcal{S}$ of subspaces are maximal. During the discussion, we will give bounds on the size of the largest examples and we will indicate the order of such families of $k$-spaces in $\PG(n,q)$, using
the \textit{big $O$ notation}.\\

In \cite{ellis}, families of subspaces pairwise intersecting in at least a $t$-space were investigated. More specifically, the author investigates the largest non-trivial example of a set of $k$-spaces, pairwise intersecting in at least a $t$-space in $\PG(n,q)$. The main theorem of this article is consistent with Theorem $1$ from \cite{ellis}.

\begin{theorem}{\textbf{\cite[Theorem 1]{ellis}}}
Let $\mathcal{F}$ be a 
set of $k$-spaces pairwise intersecting in at least a $t$-space in $\PG(n,q)$, $n \geq 2k+5+ \frac{t(t+5)}{2}$, of maximum size, with $\mathcal{F}$ not a $t$-pencil, then $\mathcal{F}$ is one of the following examples:
\begin{itemize}
    \item [$i)$] the set of $k$-spaces, meeting a fixed $(t+2)$-space in at least a $(t+1)$-space,
    \item [$ii)$]the set of $k$-spaces in a fixed $(k+1)$-space $Y$ together with the set of $k$-spaces through a $t$-space $\pi\subset Y$, that have at least a $(t+1)$-space in common with $Y$.
\end{itemize}
\end{theorem}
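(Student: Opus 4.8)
The plan is to reduce the classification to the already-settled one-dimension-lower problem of Theorem \ref{basisEKRthm}, by showing that a maximum non-$t$-pencil family is essentially controlled by the set of $(t+1)$-spaces whose \emph{entire} pencil it contains. First I would record that both (i) and (ii) really are families pairwise meeting in at least a $t$-space and estimate their sizes. Writing $\sigma$ for the $(t+2)$-space in (i) and $(Y,\pi)$ for the data in (ii): construction (i) is exactly the union of the $(t+1)$-pencils through the $\theta_{t+2}$ hyperplanes of $\sigma$, so $|\mathcal F_{(i)}|=\Theta\!\left(q^{t+2}\,\qbin{n-t-1}{k-t-1}\right)$; while (ii) is the union of the $(t+1)$-pencils through the $(t+1)$-spaces $\rho$ with $\pi\subseteq\rho\subseteq Y$, together with the bounded set of $k$-spaces of $Y$ not through $\pi$. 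Let $M$ be the larger of the two sizes; since $\mathcal F$ is a maximum non-pencil, $|\mathcal F|\ge M$, which is of order $q^{(k-t-1)(n-k)+O(1)}$.

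The core of the argument is then the following. Call a $(t+1)$-space $\rho$ \emph{heavy} if it is contained in at least a fixed suitable fraction of $\qbin{n-t-1}{k-t-1}$ elements of $\mathcal F$, and let $\mathcal R$ be the set of heavy $(t+1)$-spaces. The main step is to prove that (a) $\mathcal R\ne\emptyset$ and in fact almost every element of $\mathcal F$ contains a heavy $(t+1)$-space, and (b) any two heavy $(t+1)$-spaces meet in at least a $t$-space. For (b): if $\rho_1,\rho_2\in\mathcal R$ met in dimension less than $t$, then since $n$ is large and each $\rho_i$ is abundant one could choose $k$-spaces $H_i\supseteq\rho_i$ in $\mathcal F$ in generic position, giving $\dim(H_1\cap H_2)=\dim(\rho_1\cap\rho_2)<t$, contradicting the intersection property. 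As two distinct $(t+1)$-spaces meeting in at least a $t$-space meet in exactly a $t$-space, $\mathcal R$ is a set of $(t+1)$-spaces pairwise intersecting in a $((t+1)-1)$-space. Applying Theorem \ref{basisEKRthm} with $k$ replaced by $t+1$, either all members of $\mathcal R$ pass through a common $t$-space $\pi$, or all members of $\mathcal R$ lie in a common $(t+2)$-space $\sigma$.

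It then remains to upgrade these two alternatives to the exact descriptions (i), (ii), which requires absorbing the elements of $\mathcal F$ containing no heavy $(t+1)$-space. In the ``$(t+2)$-space'' alternative the members of $\mathcal R$ are the hyperplanes of a fixed $(t+2)$-space $\sigma$; one checks that every $k$-space meeting $\sigma$ in at least a $(t+1)$-space is compatible with the whole family, so maximality yields exactly (i) and no residual elements survive. In the ``common $t$-space'' alternative the members of $\mathcal R$ all pass through a fixed $t$-space $\pi$; since $\mathcal F$ is not the $t$-pencil through $\pi$, residual elements not containing $\pi$ must exist, and the generic-position argument above forces each of them to meet $\pi$ in a $(t-1)$-space and to lie, together with $\pi$, in a bounded space. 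A further use of the intersection property confines all residual elements to the $k$-subspaces of a single $(k+1)$-space $Y\supseteq\pi$, and maximality then reconstitutes exactly the family in (ii). Comparing $|\mathcal F_{(i)}|$ with $|\mathcal F_{(ii)}|$ for the given $n,k,t,q$ determines which alternative is realised.

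The hard part is step (a): showing that a non-pencil family of size at least $M$ must be concentrated, i.e. that heavy $(t+1)$-spaces exist and cover all but a negligible portion of $\mathcal F$. This is the quantitative heart of the proof and is exactly where the hypothesis $n\ge 2k+5+\tfrac{t(t+5)}{2}$ enters: one fixes an element $E_0\in\mathcal F$, sorts the remaining elements by the dimension and position of their intersection with $E_0$, and bounds the number of elements of each ``type'' by iterating Theorem \ref{q-analogue}, together with the Hilton--Milner-type bound of Theorem \ref{q-analogue_Hilton}, inside the relevant quotient spaces. If no $(t+1)$-space were heavy these bounds would force $|\mathcal F|<M$, contradicting maximality; the polynomial-in-$t$ slack in the dimension hypothesis is precisely what is consumed by the $t$ nested quotient-and-count steps, and the need to keep the hypotheses of the auxiliary theorems valid throughout these quotients is what makes the bound quadratic in $t$. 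Controlling the error terms uniformly in $q$, including the small case $q=2$, is the most delicate point.
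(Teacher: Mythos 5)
This statement is quoted verbatim from \cite[Theorem 1]{ellis} and the paper you are reading offers no proof of it at all, so there is no in-paper argument to compare yours against; the closest analogue is the paper's Main Theorem for the special case $t=k-2$, which is proved by a completely different, elementary geometric route (fixing three members $A,B,C$ with $\dim(A\cap B\cap C)=k-4$, analysing the span $\alpha$ of the traces $D\cap\langle A,B\rangle$, and exhausting the possible dimensions of $\alpha$), with no concentration or ``heavy subspace'' machinery anywhere. Your reduction of the classification to Theorem \ref{basisEKRthm} applied to the set $\mathcal{R}$ of heavy $(t+1)$-spaces is a sensible and recognisable strategy for this kind of stability result, but as written it is an outline rather than a proof.

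The genuine gap is your step (a), which you yourself flag as ``the hard part'': you never actually show that heavy $(t+1)$-spaces exist, nor that they cover all but a negligible part of $\mathcal{F}$. The sentence ``if no $(t+1)$-space were heavy these bounds would force $|\mathcal F|<M$'' is the entire content of the theorem and is asserted, not derived; iterating Theorem \ref{q-analogue} and Theorem \ref{q-analogue_Hilton} in quotients does not obviously produce the required estimate, and no accounting is given of how the threshold defining ``heavy'' interacts with the bound $M$ or with the hypothesis $n\ge 2k+5+\tfrac{t(t+5)}{2}$. Two further steps are also under-specified: in (b), ruling out $\dim(\rho_1\cap\rho_2)<t$ requires an actual count showing that, for a fixed $H_2\supseteq\rho_2$, the $k$-spaces through $\rho_1$ meeting $H_2$ in at least a $t$-space form a sufficiently small fraction of $\qbin{n-t-1}{k-t-1}$ (generic position alone does not force $\dim(H_1\cap H_2)=\dim(\rho_1\cap\rho_2)$); and in the common-$t$-space alternative, the claim that all residual elements meet $\pi$ in a $(t-1)$-space and are confined to a single $(k+1)$-space $Y$ is exactly the Hilton--Milner-type structure one is trying to establish, so it cannot be dispatched by ``a further use of the intersection property'' without an explicit argument. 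Until step (a) is carried out quantitatively, the proposal does not constitute a proof.
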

Note that the two examples in the previous theorem correspond to Example \ref{voorbeeldenalgemeen}$(ii)$ and $(iii)$  for $t=k-2$ respectively. 
While, in [10], David Ellis classifies the largest non-trivial example for all values of $t$, here we specify this problem classifying the ten largest examples when $t=k-2$, see Main Theorem \ref{overzicht2}.

We end this section with some examples of maximal sets $\mathcal{S}$ of $k$-spaces in $\PG(n,q)$ pairwise intersecting in at least a $(k-2)$-space, $n \geq k+2$ and $k \geq 3$. We add a proof of maximality for the examples for which it is not straightforward.

\begin{example}\label{voorbeeldenalgemeen}

\begin{itemize}
      \item [$(i)$] \emph{$(k-2)$-pencil:} the set $\mathcal{S}$ is the set of all $k$-spaces that contain  a fixed $(k-2)$-space.  Then $|\mathcal{S}|=\qbin{n-k+2}{2}$.
    \item [$(ii)$] \emph{Star:} there exists a $k$-space $\zeta$ such that $\mathcal{S}$ contains all $k$-spaces that have at least a $(k-1)$-space in common with $\zeta$. Then $|\mathcal{S}|=q \theta_k \theta_{n-k-1}+1$.
    \item[$(iii)$] \emph{Generalized Hilton-Milner example:} there exists a $(k+1)$-space $\nu$ and a $(k-2)$-space $\pi\subset \nu$ such that $\mathcal{S}$ consists of all $k$-spaces in $\nu$ (type 1), together with all $k$-spaces of $\PG(n,q)$, not in $\nu$, through $\pi$ that intersect $\nu$ in a $(k-1)$-space (type 2). Then $|\mathcal{S}|=\theta_{k+1}+ q^2(q^2+q+1) \theta_{n-k-2}$.

    \item[$(iv)$] There exists a $(k+2)$-space $\rho$, a $k$-space $\alpha \subset \rho$ and a $(k-2)$-space $\pi\subset \alpha$ so that $\mathcal{S}$ contains all $k$-spaces in $\rho$ that meet $\alpha$ in  a $(k-1)$-space not through $\pi$ (type 1),  all $k$-spaces in $\rho$ through $\pi$ (type 2), and all $k$-spaces in $\PG(n,q)$, not in $\rho$, that contain a $(k-1)$-space of $\alpha$ through $\pi$ (type 3). Then $|\mathcal{S}|=(q+1)\theta_{n-k}+q^3(q+1)\theta_{k-2}+q^4-q$.

\textnormal{This example will be discussed in Proposition 2.5.}

\begin{figure}[h!] 
\centering
\definecolor{qqwuqq}{rgb}{0,0.39215686274509803,0}
\definecolor{qqqqff}{rgb}{0,0,1}
\definecolor{ffqqqq}{rgb}{1,0,0}
\begin{tikzpicture}[line cap=round,line join=round,>=triangle 45,x=1cm,y=1cm,scale=0.33]
\clip(-12.110416907277649,-7.994359453778139) rectangle (11.665742069618393,9.432875037725609);
\draw [rotate around={41.835971459820975:(0.7739561172384822,-0.7261943713711371)},line width=0.8pt] (0.7739561172384822,-0.7261943713711371) ellipse (7.515681694324461cm and 3.928027465508016cm);
\draw [line width=0.8pt] (2.64,0.92) circle (2.1661549101291957cm);
\draw [line width=0.8pt] (2.45,0.13) circle (1.1244380174566182cm);
\draw [rotate around={61.19796975948429:(1.693478646313929,-1.5636335417967262)},line width=0.8pt,dash pattern=on 2pt off 3pt,color=ffqqqq] (1.693478646313929,-1.5636335417967262) ellipse (3.6789125397853346cm and 1.7950249650548737cm);
\draw [rotate around={36.656108415966884:(0.8077953790001363,-1.1263974984392835)},line width=0.8pt,dash pattern=on 2pt off 3pt,color=ffqqqq] (0.8077953790001363,-1.1263974984392835) ellipse (3.5192555909516052cm and 1.8323303997725884cm);
\draw [rotate around={7.6744795168939195:(0.06785745947215666,0.18531063163304232)},line width=0.8pt,dash pattern=on 2pt off 3pt,color=qqqqff] (0.06785745947215666,0.18531063163304232) ellipse (2.438938473400423cm and 1.8450453518989294cm);
\draw [rotate around={40.49194625346984:(3.9794532195605288,2.2084019527121024)},line width=0.8pt,dash pattern=on 2pt off 3pt,color=qqqqff] (3.9794532195605288,2.2084019527121024) ellipse (2.3934159430050927cm and 1.21445240062208cm);
\draw [rotate around={-41.8361163811192:(3.7522489926541422,-1.1182853553527468)},line width=0.8pt,dash pattern=on 2pt off 3pt,color=qqwuqq] (3.7522489926541422,-1.1182853553527468) ellipse (3.3921314712323936cm and 2.745190681796649cm);
\draw [rotate around={-77.4034267641815:(1.4193006863489073,3.802114708854659)},line width=0.8pt,dash pattern=on 2pt off 3pt,color=qqwuqq] (1.4193006863489073,3.802114708854659) ellipse (5.696714538512506cm and 2.3561372340601765cm);
\begin{scriptsize}
\draw[color=black] (-4.101180727785475,1.3681230214944122) node {\normalsize{$\rho$}};
\draw[color=black] (1.2288763594784877,3.2052652590350867) node {\normalsize{$\alpha$}};
\draw[color=black] (2.7916156101946235,-0.16671017117321907) node {\normalsize{$\pi$}};
\end{scriptsize}
\end{tikzpicture}  \caption{Example $(iv)$: The blue, red and green $k$-spaces correspond to the $k$-spaces of type $1$, $2$ and $3$, respectively.}
\end{figure}
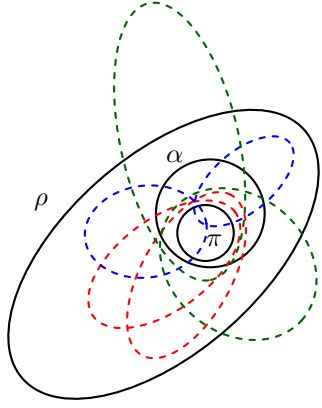

  \newpage
  
\begin{lemma}
    The set $\mathcal{S}$ is maximal.
\end{lemma}
\begin{proof}
 \textnormal{ Suppose there is a $k$-space $E\notin \mathcal{S}$, meeting all elements of $\mathcal{S}$ in at least a $(k-2)$-space. If $\pi \not \subset E$, then $E$ contains a $(k-1)$-space $\sigma_E \subset \alpha$. This follows since $E$ meets all $k$-spaces in $\mathcal{S}$ of type 3, in at least a $(k-2)$-space. Let $G$ be an element of $\mathcal{S}$ of type 2 such that
 $\langle G , \alpha \rangle=\rho$, and so $G\cap\alpha=\pi$. We have
    \begin{equation*}
    \dim(E\cap \rho)\geq \dim(E\cap \alpha)+\dim(E\cap G)-\dim(E\cap G \cap \alpha)\geq (k-1)+(k-2)-(k-3)\geq k.
    \end{equation*} 
    So, $E\subset \rho$, which implies that $E\in \mathcal{S}$ (type 1), a contradiction. Now, we suppose that $\pi\subset E$. Let $F_1$ and $F_2$ be two elements of $\mathcal{S}$ of type $1$, with $\langle F_1,F_2 \rangle =\rho$ and  $\dim(\pi \cap F_1 \cap F_2)=k-4$. {First we show that their existence is assured. Indeed, let $\pi_1$ and $\pi_2$ be two different $(k-3)$-spaces in $\pi$ and let $\alpha_i$ be a $(k-1)$-space in $\alpha$ through $\pi_i$, $i=1,2$. Consider $P_1$ be a point in $\rho\setminus \alpha$ and let  $F_1=\langle P_1,\alpha_1\rangle $. Finally, consider $P_2$ be a point in $\rho \setminus \langle \alpha, F_1\rangle $ and let $F_2=\langle P_2,\alpha_2\rangle $.}
    Since  $E \not \in \mathcal{S}$ and $\pi \subset E$, we know that $E$ cannot contain a $(k-1)$-space of $\alpha$, and so, $E\cap \alpha=\pi$. Hence, from $F_1\cap F_2 \subset \alpha$, there follows that   $\dim(E \cap F_1 \cap F_2)=\dim(\pi \cap F_1 \cap F_2)$. Then
    \begin{equation*}
\begin{split}
\dim(E \cap \rho)=\dim(E\cap \langle F_1, F_2 \rangle)\geq
\dim(E\cap F_1)+\dim(E\cap F_2)& \\-\dim(E\cap F_1 \cap F_2)\geq (k-2)+(k-2)-(k-4)\geq k
\end{split}
    \end{equation*}
Hence, $E\subset \rho$ which implies that $E\in \mathcal{S}$, type 2, again a contradiction.}
    \end{proof}

    \item[$(v)$] There is a $(k+2)$-space $\rho$, and a $(k-1)$-space $\alpha \subset \rho$  such that $\mathcal{S}$ contains all $k$-spaces in $\rho$ that meet $\alpha$ in at least a $(k-2)$-space (type 1), and all $k$-spaces in $\PG(n,q)$, not in $\rho$, through $\alpha$ (type 2). Note that all $k$-spaces in $\PG(n,q)$ through $\alpha$ are contained in $\mathcal{S}$.
    Then $|\mathcal{S}|=\theta_{n-k}+ q^2 (q^2+q+1) \theta_{k-1}$.

\textnormal{This example will be discussed in Proposition \ref{prop5}.}

  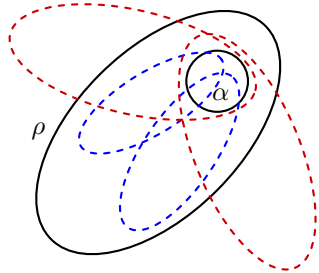
\begin{figure}[h!]
    \centering
\definecolor{ccqqqq}{rgb}{0.8,0,0}
\definecolor{qqqqff}{rgb}{0,0,1}
\begin{tikzpicture}[line cap=round,line join=round,>=triangle 45,x=1cm,y=1cm,scale=0.38]
\clip(-4.17,-5.99) rectangle (18.23,4.93);
\draw [rotate around={45.27590049754996:(7.72,-0.35)},line width=0.8pt] (7.72,-0.35) ellipse (5.254750324487912cm and 2.864524563117887cm);
\draw [line width=0.8pt] (9.76,1.44) circle (1.0667708282475676cm);
\draw [rotate around={56.093723011557856:(8.46,-1.02)},line width=0.8pt,dash pattern=on 2pt off 3pt,color=qqqqff] (8.46,-1.02) ellipse (3.195468857636123cm and 1.2521266789435819cm);
\draw [rotate around={31.776932347298654:(7.48,0.68)},line width=0.8pt,dash pattern=on 2pt off 3pt,color=qqqqff] (7.48,0.68) ellipse (2.8668214711795144cm and 1.0728771353774806cm);
\draw [rotate around={-67.82376365850719:(10.8,-1.02)},line width=0.8pt,dash pattern=on 2pt off 3pt,color=ccqqqq] (10.8,-1.02) ellipse (4.375072333115626cm and 1.8299885026943925cm);
\draw [rotate around={-12.983775584947859:(6.84,2.11)},line width=0.8pt,dash pattern=on 2pt off 3pt,color=ccqqqq] (6.84,2.11) ellipse (4.35598972476016cm and 1.811890306341988cm);
\begin{scriptsize}
\draw[color=black] (3.57,-0.3) node {\normalsize{$\rho$}};
\draw[color=black] (9.89,1) node {\normalsize{$\alpha$}};
\end{scriptsize}
\end{tikzpicture}
\caption{Example$(v)$: The blue and red $k$-spaces correspond to the $k$-spaces of type $1$, $2$, respectively.}
\end{figure}

\begin{lemma}
    The set $\mathcal{S}$ is maximal.
\end{lemma}
\begin{proof}
   \textnormal{Suppose there is a $k$-space $E\notin \mathcal{S}$, meeting all elements of $\mathcal{S}$ in at least a $(k-2)$-space. Then $E$ contains a $(k-2)$-space $\sigma_E$ in $\alpha$, since $E$ meets all elements of $\mathcal{S}$ of type 2. Note that $E$ cannot contain $\alpha$, since then, $E$ would be a $k$-space in $\mathcal{S}$. Let $\sigma_1$ and $\sigma_2$ be two distinct $(k-2)$-spaces in $\alpha$ with $\sigma_1,\sigma_2 \neq \sigma_E$ and $\dim(\sigma_1\cap\sigma_2\cap \sigma_E)=k-4$. Consider $F_1$ and $F_2$, two elements of $\mathcal{S}$ of type 1 through $\sigma_1$ and $\sigma_2$, respectively, with $\dim(F_1 \cap F_2)=k-2$. Note that $\dim(E \cap F_1 \cap F_2)=k-4$. Indeed,} 
    $$ k-4 \leq \dim(E\cap F_1 \cap F_2) \leq k-2.$$
    \textnormal{\begin{itemize}
    \item [$(a)$] If $\dim(E \cap F_1 \cap F_2)=k-2$, then $E \cap F_1 \cap F_2 \cap \alpha=F_1 \cap F_2 \cap \alpha$, a contradiction.
    \item [$(b)$] If $\dim(E \cap F_1 \cap F_2)=k-3$, there exists a point $P \in F_1 \cap F_2 \cap E$ not in $\alpha$ and $\dim(E \cap \rho) \geq k-1$. Since $E \not \in \mathcal{S}$, then $E \not \subset \rho$. The only possibility is $\dim(E \cap \rho)=k-1$, but then we can find a $k$-space  $F$ of type 1 such that $E\cap F$ is a $(k-3)$-space, again a contradiction.
    \end{itemize}}
\textnormal{Hence, $\dim(E \cap F_1 \cap F_2)=k-4$ and
    \begin{equation*}
    \begin{split}
     \dim(E\cap \rho)=\dim(E\cap \langle F_1, F_2 \rangle)&\geq \dim(E\cap F_1)+\dim(E\cap F_2)\\
     &-\dim(E\cap F_1 \cap F_2)\geq (k-2)+(k-2)-(k-4)\geq k.
     \end{split}
    \end{equation*} And so, $E\subset \rho$, which implies that $E\in \mathcal{S}$, a contradiction.}
    \end{proof}

\item [$(vi)$] There are two $(k+2)$-spaces $\rho_1,\rho_2$ intersecting in a $(k+1)$-space $\alpha=\rho_1\cap \rho_2$. There are two $(k-1)$-spaces $\pi_A,\pi_B\subset \alpha$ with $\pi_A\cap \pi_B$ the $(k-2)$-space $\lambda$, there is a point $P_{AB} \in \alpha\setminus \langle \pi_A,\pi_B \rangle$, and let $\lambda_A, \lambda_B \subset \lambda$ be two different $(k-3)$-spaces. Then $\mathcal{S}$ contains 
    \begin{enumerate}
        \item[type 1.] all $k$-spaces in $\alpha$,
        \item[type 2.]  all $k$-spaces of $\PG(n,q)$ through $\langle P_{AB},\lambda \rangle$,  not in $\rho_1$ and not in $\rho_2$.
        \item[type 3.]  all $k$-spaces in $\rho_1$, not in $\alpha$, through $P_{AB}$ and a $(k-2)$-space in $\pi_A$ through $\lambda_A$,
        \item[type 4.]  all $k$-spaces in $\rho_1$, not in $\alpha$, through $P_{AB}$ and a $(k-2)$-space in $\pi_B$ through $\lambda_B$,
        \item[type 5.]  all $k$-spaces in $\rho_2$, not in $\alpha$, through $P_{AB}$ and a $(k-2)$-space in $\pi_A$ through $\lambda_B$,
        \item[type 6.]  all $k$-spaces in $\rho_2$, not in $\alpha$, through $P_{AB}$ and a $(k-2)$-space in $\pi_B$ through $\lambda_A$.
    \end{enumerate}
Then $|\mathcal{S}|= \theta_{n-k}+q^2 \theta_{k-1}+4q^3$.\\

\textnormal{This example will be discussed in Proposition \ref{prop6k=3} for $k=3$ and in Proposition \ref{prop6kmeerdan3} for $k>3$.}

\begin{figure}[h!]
    \centering
\definecolor{ccqqqq}{rgb}{0.8,0,0}
\definecolor{qqqqff}{rgb}{0,0,1}
\definecolor{qqwuqq}{rgb}{0,0.39215686274509803,0}
	\definecolor{ffzztt}{rgb}{1,0.6,0.2}
\begin{tikzpicture}[line cap=round,line join=round,>=triangle 45,x=1cm,y=1cm,scale=0.51]
\clip(-13.037950890260937,-6.351958797718809) rectangle (11.963335364369781,6.938077890838478);
\draw [rotate around={-53.06917102705861:(2.73,0.25)},line width=0.8pt] (2.73,0.25) ellipse (6.529021470134076cm and 3.285714740733243cm);
\draw [rotate around={55.813877513624966:(-2.67,0.14)},line width=0.8pt] (-2.67,0.14) ellipse (6.543782930999828cm and 3.0936378340143063cm);
\draw [line width=0.8pt] (-0.73,3.15) circle (1.2425779653607254cm);
\draw [line width=0.8pt] (0.5120078446767284,3.1876366013538404) circle (1.2425779653607254cm);
\draw [rotate around={-89.66903576176806:(-0.11571359371648168,-0.598299934225342)},line width=0.8pt,dash pattern=on 2pt off 3pt,color=qqwuqq] (-0.11571359371648168,-0.598299934225342) ellipse (4.627035258195526cm and 1.2623662400486673cm);
\draw [line width=0.8pt] (-0.10374059835660195,3.41661695378231) circle (0.5774467420658767cm);
\draw [line width=0.8pt] (-0.10374059835660195,2.860403342673657) circle (0.5562136111086531cm);
\draw [rotate around={70.07441934181567:(-1.1367087332726713,0.039605743479772915)},line width=0.8pt,dash pattern=on 2pt off 3pt,color=qqqqff] (-1.1367087332726713,0.039605743479772915) ellipse (3.58024255995664cm and 1.1784214387887235cm);
\draw [rotate around={41.814945181755704:(-2.442614054139102,1.2877675641923922)},line width=0.8pt,dash pattern=on 2pt off 3pt,color=ccqqqq] (-2.442614054139102,1.2877675641923922) ellipse (3.745744489564882cm and 1.5021645972555848cm);
\draw [rotate around={-62.43812695799701:(1.4084001571710807,0.04939030228868189)},line width=0.8pt,dash pattern=on 2pt off 3pt,color=qqqqff] (1.4084001571710807,0.04939030228868189) ellipse (3.756230277271819cm and 1.509070780483217cm);
\draw [rotate around={-38.56195048500158:(2.5382740544095,1.3109511402995522)},line width=0.8pt,dash pattern=on 2pt off 3pt,color=ccqqqq] (2.5382740544095,1.3109511402995522) ellipse (3.9753805280474555cm and 1.614294582027233cm);
\draw [line width=0.8pt,dash pattern=on 2pt off 3pt, color=ffzztt] (-0.14347014200722,4.64823280695147) circle (1.0398216662005848cm);
\begin{scriptsize}
\draw[color=black] (6.99,0.5643892454504077) node {\normalsize{$\rho_2$}};
\draw[color=black] (-6.8747418222639896,0.5643892454504077) node {\normalsize{$\rho_1$}};
\draw[color=black] (-1.5391014630440187,3.199542239187616) node {\normalsize{$\pi_A$}};
\draw [fill=black] (-0.07948249109653749,1.7130646142144335) circle (2.5pt);
\draw[color=black] (-0.11352340855708789,0.97542239187616) node {\normalsize{$P_{AB}$}};
\draw[color=black] (1.2702765220146597,3.5479922218305524) node {\normalsize{$\pi_B$}};
\draw[color=black] (-0.12352340855708789,6.332885956643122) node {\normalsize{$\alpha$}};
\draw[color=black] (-0.12352340855708789,3.732885956643122) node {\normalsize{$\lambda_A$}};
\draw[color=black] (-0.10174528464190434,2.729314132629496) node {\normalsize{$\lambda_B$}};
\end{scriptsize}
\end{tikzpicture}\caption{Example$(vi)$: The orange $k$-space is of type 1, the green one of type 2, the red ones of type 3 and 6, and  the blue ones of type 4 and 5.}
\end{figure}
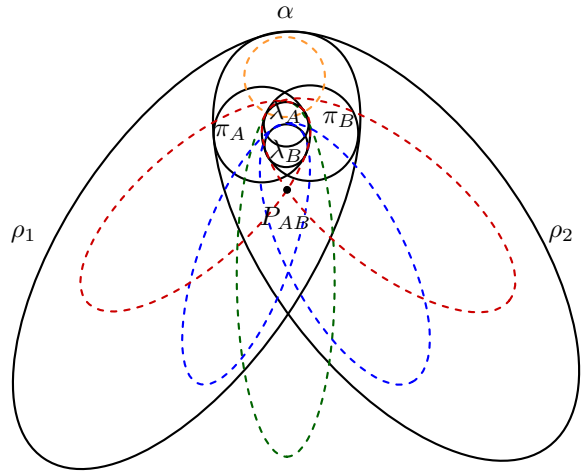

\begin{lemma}
    The set $\mathcal{S}$  is maximal.
    \end{lemma}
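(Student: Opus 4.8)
The plan is to argue by contradiction, exactly in the spirit of the maximality proofs for Examples $(iv)$ and $(v)$. Suppose there is a $k$-space $E\notin\mathcal{S}$ meeting every element of $\mathcal{S}$ in at least a $(k-2)$-space. Writing $\mu:=\langle P_{AB},\lambda\rangle$ (a $(k-1)$-space contained in $\alpha\subset\rho_1\cap\rho_2$), I aim to force $E$ to lie in $\alpha$, in $\rho_1$ or in $\rho_2$, or to pass through $\mu$; in each of these situations $E$ would be of type $1$, $2$, $3$, $4$, $5$ or $6$, contradicting $E\notin\mathcal{S}$. The only tool needed throughout is the submodular inequality $\dim(E\cap\langle F_1,F_2\rangle)\geq\dim(E\cap F_1)+\dim(E\cap F_2)-\dim(E\cap F_1\cap F_2)$ applied to well-chosen pairs $F_1,F_2\in\mathcal{S}$.

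First I would pin down $E\cap\alpha$. Since the type $1$ members are \emph{all} $k$-spaces of the $(k+1)$-space $\alpha$, meeting each of them in at least a $(k-2)$-space already forces $\dim(E\cap\alpha)=k-1$: if $\dim(E\cap\alpha)=k$ then $E\subset\alpha$ is of type $1$, while if $\dim(E\cap\alpha)\leq k-2$ one can pick a hyperplane $F$ of $\alpha$ (a type $1$ space) with $E\cap\alpha\not\subset F$, so that $\dim(E\cap F)=\dim(E\cap\alpha\cap F)\leq k-3$, a contradiction. Set $\sigma:=E\cap\alpha$, a $(k-1)$-space of $\alpha$.

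Next I would control $E\cap\mu=\sigma\cap\mu$, which a priori has dimension in $\{k-3,k-2,k-1\}$. Here I would project from $\mu$ into the quotient $\PG(n,q)/\mu\cong\PG(n-k,q)$; note $n\geq k+3$, since $\langle\rho_1,\rho_2\rangle$ is a $(k+3)$-space, so this quotient has dimension at least $3$. Under this projection the type $2$ spaces are precisely the points off the two planes $\overline{\rho_1},\overline{\rho_2}$ (which meet in the line $\overline{\alpha}=\overline{\rho_1}\cap\overline{\rho_2}$), while $E$ maps to a subspace $\overline{E}$ of dimension $k-1-\dim(E\cap\mu)$, and $\dim(E\cap G)\geq k-2$ holds for a type $2$ space $G$ exactly when the corresponding point lies in $\overline{E}$. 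If $\dim(E\cap\mu)=k-3$ then $\overline{E}$ is only a plane, and $\overline{\alpha}\subseteq\overline{E}$; passing to $\PG(n-k,q)/\overline{\alpha}\cong\PG(n-k-2,q)$ the three planes $\overline{E},\overline{\rho_1},\overline{\rho_2}$ become three points, and (unless two of them coincide, i.e.\ $E\subset\rho_1$ or $E\subset\rho_2$, which is already a favourable conclusion) these three points cannot cover $\PG(n-k-2,q)$ for $q\geq3$ or $n-k\geq4$, producing a type $2$ space met only in a $(k-3)$-space, a contradiction. Hence $\dim(E\cap\mu)\geq k-2$, the tiny case $q=2,\ n=k+3$ being absorbed into the final analysis.

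Finally comes the main work. If $\dim(E\cap\mu)=k-1$ then $\mu\subseteq E$, and the projection of $E$ is a single point, immediately identifying $E$ as a type $2$ space or placing it in $\rho_1$, $\rho_2$ or $\alpha$. Otherwise $\sigma\cap\mu$ is a $(k-2)$-space and $\mu\not\subseteq E$, and I would split into cases according to whether $P_{AB}\in E$ (equivalently $P_{AB}\in\sigma\cap\mu$) and according to the position of the $(k-1)$-space $\sigma$ with respect to $\pi_A,\pi_B,\lambda_A,\lambda_B$. For each configuration I would exhibit a pair $F_1,F_2\in\mathcal{S}$ — typically one from the $\rho_1$-types $\{1,2,3,4\}$ and one from the $\rho_2$-types $\{1,2,5,6\}$, chosen to make $\dim(E\cap F_1\cap F_2)$ as small as the incidences permit — so that the submodular inequality gives $\dim(E\cap\rho_i)\geq k$, whence $E\subset\rho_i$, and then a second application inside $\rho_i$ (or direct inspection using types $1,3,4$, resp.\ $1,5,6$) shows $E\in\mathcal{S}$. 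The hard part is exactly this case analysis: the families of types $3$–$6$ are built from the delicately chosen $(k-3)$-spaces $\lambda_A,\lambda_B$ precisely so that $\mathcal{S}$ is pairwise intersecting, and the difficulty is to verify that for \emph{every} admissible $\sigma$ one can still find two members of $\mathcal{S}$ whose span traps $E$ in $\rho_1$ or $\rho_2$; the borderline positions of $\sigma$ (meeting $\pi_A,\pi_B$ or containing $\lambda_A,\lambda_B$ degenerately) together with the residual small case left open above are where the witnessing spaces must be selected with care.
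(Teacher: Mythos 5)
Your overall strategy (contradiction plus repeated use of the Grassmann inequality $\dim(E\cap\langle F_1,F_2\rangle)\geq\dim(E\cap F_1)+\dim(E\cap F_2)-\dim(E\cap F_1\cap F_2)$ with well-chosen $F_1,F_2\in\mathcal{S}$) is exactly the one the paper uses, and your preliminary reductions are sound: forcing $\dim(E\cap\alpha)=k-1$ from the type~1 spaces is correct, and your quotient argument modulo $\mu=\langle P_{AB},\lambda\rangle$ correctly rules out $\dim(E\cap\mu)=k-3$ in most cases. But the proposal has a genuine gap: the decisive step is never carried out. You write that ``for each configuration I would exhibit a pair $F_1,F_2$'' and then concede that ``the hard part is exactly this case analysis'' and that the borderline positions of $\sigma$ are ``where the witnessing spaces must be selected with care.'' That case analysis \emph{is} the proof; without exhibiting the witnessing pairs and verifying that $\dim(E\cap F_1\cap F_2)$ really drops to $k-4$ in each configuration, nothing is established. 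In the paper this is done concretely: when $P_{AB}\notin E$ one takes $F,G$ of types $3$ and $4$ with $F\cap G\cap\alpha=\langle P_{AB},\lambda_A\cap\lambda_B\rangle$ to trap $E$ in $\rho_1$ (and symmetrically in $\rho_2$, forcing $E\subset\alpha$); when $P_{AB}\in E$ one first shows that exactly one of $E\cap\pi_A,E\cap\pi_B$ is a $(k-2)$-space and exactly one of $E\cap\lambda_A,E\cap\lambda_B$ is a $(k-3)$-space, and then a single pair consisting of a type~$1$ space $H$ and a type~$4$ space $G$ with $H\cap G$ a $(k-2)$-space $\sigma\neq\lambda$ of $\pi_B$ finishes the argument. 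None of these choices appears in your write-up.

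A second, smaller gap: your covering argument for the type~$2$ spaces fails exactly when $q=2$ and $n=k+3$, since then $\overline{E},\overline{\rho_1},\overline{\rho_2}$ are three planes through the common line $\overline{\alpha}$ in $\PG(3,2)$ and these $3+3\cdot 4=15$ points do cover all of $\PG(3,2)$. You note this and promise it will be ``absorbed into the final analysis,'' but since that analysis is not given, the case remains open. The paper avoids this issue entirely by never isolating $\dim(E\cap\mu)$ as a separate case: it works directly with $E\cap\pi_A$, $E\cap\pi_B$, $E\cap\lambda_A$, $E\cap\lambda_B$ and the types $3$--$6$, which is why its argument is uniform in $q$ and $n$.
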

    \begin{proof}
\textnormal{    Suppose there is a $k$-space $E\notin \mathcal{S}$, meeting all elements of $\mathcal{S}$ in at least a $(k-2)$-space. Suppose first that $P_{AB}\notin E$. As $E$ contains at least a $(k-2)$-space of all elements of $\mathcal{S}$, type 1 and 2, $E$ contains a $(k-1)$-space $\beta$ in $\alpha$ such that $\beta$ contains a $(k-2)$-space of $\langle P_{AB}, \lambda \rangle$, not through $P_{AB}$. Consider now the elements $F$ and $G$ of $\mathcal{S}$, type $3$ and $4$ respectively, with $F\cap G\cap \alpha=\langle P_{AB}, \lambda_A\cap \lambda_B\rangle$. If $E\not \subset \rho_1$, then $\dim(E \cap F \cap G)\leq k-4$ and
    \begin{equation*}
    \begin{split}
 k-1&=\dim(E\cap \alpha)= \dim(E\cap \rho_1)= \dim(E\cap \langle F, G\rangle)\geq\\
 & \dim(E\cap F)+\dim(E\cap G)-\dim(E\cap F\cap G)\geq (k-2)+(k-2)-(k-4)\geq k,
    \end{split}
    \end{equation*}
a contradiction. Hence, $E\subset \rho_1$. Analogously, we find that $E \subset \rho_2$, using two elements of $\mathcal{S}$ of type $5$ and $6$. And so, $E\subset \rho_1\cap \rho_2=\alpha$, which implies that $E\in \mathcal{S}$, type 1, a contradiction. So now we can suppose that $P_{AB}\in E$. Then $E$ contains a $(k-1)$-space of $\alpha$ that meets $\lambda$ in a $(k-3)$-space. This follows
since $E$ meets the elements of $\mathcal{S}$ of type $1$ and $2$ in at least a $(k-2)$-space.
Note that the dimension of $E\cap \pi_A$ and $E\cap \pi_B$ is $k-2$ or $k-3$ as $E\cap \lambda$ is a $(k-3)$-space. Moreover, the latter spaces do not both have the same dimension. Indeed, if $\dim(E \cap \pi_A)=\dim(E \cap \pi_B)=k-2$, then $E \subset \alpha$, type 1, a contradiction. 
Moreover, since $E$ contains $P_{AB}$, and since $\dim(E\cap \alpha)=k-1$, we know that  $\dim(E \cap \langle \pi_A, \pi_B \rangle) =k-2$. If $\dim(E \cap \pi_A)=\dim(E \cap \pi_B)=k-3$, then $E \cap \pi_A=E \cap \lambda=E \cap \pi_B=E \cap \langle \pi_A, \pi_B \rangle$, which cannot occur.\\
By a similar argument, we find that the dimension of $E\cap \lambda_A$ and $E\cap \lambda_B$ is $k-3$ or $k-4$, both not the same dimension. Then $E$ contains a $(k-2)$-space of $\pi_A$ or $\pi_B$, and $E$ contains $\lambda_A$ or $\lambda_B$. W.l.o.g. we can suppose that $E$ contains $\lambda_A$ and a $(k-2)$-space of $\pi_A$, and meets $\pi_B$ in $\lambda_A$.\\
Let $H$ be an element of type $1$ of $\mathcal{S}$, and let $G$ be an element of type $4$ of $\mathcal{S}$ through a $(k-2)$-space $\sigma \neq \lambda$ in $\pi_B$ with $H \cap G=\sigma$. Then, since $\dim(E \cap G \cap H)=k-4$, 
\begin{equation*}
\begin{split}
    \dim(E\cap \rho_1)=&\dim(E\cap \langle G,H \rangle)
\geq \dim(E\cap G)+\dim(E\cap H)\\
    &-\dim(E\cap G\cap H)\geq (k-2)+(k-2)-(k-4)\geq k,
\end{split}
\end{equation*}
and so $E\subset \rho_1$. Hence, $E\in \mathcal{S}$, type $3$, a contradiction.}
    \end{proof}

   \item[$(vii)$] There is a $(k-3)$-space $\gamma$ contained in all $k$-spaces of $\mathcal{S}$. In the quotient space  $\PG(n,q)/ \gamma$, the set of planes corresponding to the elements of $\mathcal{S}$ is the set of planes of example $VIII$ in \cite{EKRplanes}: Let $\Psi$ be an $(n-k+2)$-space, disjoint to $\gamma$, in $\PG(n,q)$. Consider two solids $\sigma_1$ and $\sigma_2$ in $\Psi$, intersecting in a line $l$. Take the points $P_1$ and $P_2$ on $l$.  Then $\mathcal{S}$ is the set containing all $k$-spaces through $\langle \gamma,l\rangle$ (type 1), all $k$-spaces through $\langle \gamma, P_1 \rangle$ that contain a line in $\sigma_1$ and a line in $\sigma_2$ skew to $\gamma$ (type 2), and all $k$-spaces through $\langle \gamma, P_2\rangle$ in $\langle \gamma,\sigma_1\rangle$ or in $\langle \gamma,\sigma_2\rangle$ (type 3).
    Then $|\mathcal{S}|=\theta_{n-k}+ q^4+2q^3+3q^2$.\\
    
\textnormal{In Lemma \ref{maxmaarten8}, we  prove that the set $\mathcal{S}$ is maximal.}

\begin{figure}[h!]
    \centering
\definecolor{ffqqqq}{rgb}{1,0,0}
\definecolor{qqwuqq}{rgb}{0,0.39215686274509803,0}
\definecolor{sexdts}{rgb}{0.1803921568627451,0.49019607843137253,0.19607843137254902}
\definecolor{rvwvcq}{rgb}{0.08235294117647059,0.396078431372549,0.7529411764705882}
\begin{tikzpicture}[line cap=round,line join=round,>=triangle 45,x=1cm,y=1cm,scale=0.78]
\clip(-7.925724667883978,-3.063460824240777) rectangle (8.849845539882477,3);
\fill[line width=0.8pt,color=rvwvcq,fill=rvwvcq,fill opacity=0.05] (0,2.418429242254347) -- (2.1785816316471482,-1.6492751461457986) -- (-2.197707227597139,-1.7053814135720076) -- cycle;
\fill[line width=0.8pt,color=ffqqqq,fill=ffqqqq,fill opacity=0.05] (0,3) -- (-1.4614876107779,-3.1682507742901) -- (0,1) -- cycle;
\fill[line width=0.8pt,color=ffqqqq,fill=ffqqqq,fill opacity=0.05] (1.4614876107779,-3.1682507742901) -- (0,3) -- (0,1) -- cycle;
\fill[line width=0.8pt,color=qqwuqq,fill=qqwuqq,fill opacity=0.04] (0,1) -- (2.3202546646618,0.3260427553064) -- (3.1197964044848,-1.6876179227661) -- cycle;
\fill[line width=0.8pt,color=qqwuqq,fill=qqwuqq,fill opacity=0.04] (0,1) -- (-3.1197964044848,-1.6876179227661) -- (-2.3202546646618,0.3260427553) -- cycle;
\draw [shift={(7.759315031590855,-6.753068855411385)},line width=0.8pt]  plot[domain=2.2428343529918475:2.719276510038199,variable=\t]({1*12.463118464407989*cos(\t r)+0*12.463118464407989*sin(\t r)},{0*12.463118464407989*cos(\t r)+1*12.463118464407989*sin(\t r)});
\draw [shift={(-2.789442594519279,-2.0131851503265046)},line width=0.8pt]  plot[domain=2.7190357130443834:5.602432282667888,variable=\t]({1*0.8983969224547149*cos(\t r)+0*0.8983969224547149*sin(\t r)},{0*0.8983969224547149*cos(\t r)+1*0.8983969224547149*sin(\t r)});
\draw [shift={(-13.632015596660036,6.566007798330017)},line width=0.8pt]  plot[domain=5.6131084485081635:5.895540788551897,variable=\t]({1*14.724547260905952*cos(\t r)+0*14.724547260905952*sin(\t r)},{0*14.724547260905952*cos(\t r)+1*14.724547260905952*sin(\t r)});
\draw [shift={(-7.759315031590855,-6.753068855411385)},line width=0.8pt]  plot[domain=0.4223161435515943:0.8987583005979458,variable=\t]({1*12.463118464407989*cos(\t r)+0*12.463118464407989*sin(\t r)},{0*12.463118464407989*cos(\t r)+1*12.463118464407989*sin(\t r)});
\draw [shift={(13.632015596660036,6.566007798330017)},line width=0.8pt]  plot[domain=3.5292371722174822:3.811669512261216,variable=\t]({1*14.724547260905952*cos(\t r)+0*14.724547260905952*sin(\t r)},{0*14.724547260905952*cos(\t r)+1*14.724547260905952*sin(\t r)});
\draw [shift={(2.789442594519279,-2.0131851503265046)},line width=0.8pt]  plot[domain=-2.4608396290780945:0.42255694054541004,variable=\t]({1*0.8983969224547149*cos(\t r)+0*0.8983969224547149*sin(\t r)},{0*0.8983969224547149*cos(\t r)+1*0.8983969224547149*sin(\t r)});
\draw [line width=0.8pt] (0,3)-- (0,1);
\draw [line width=0.8pt,color=rvwvcq] (0,2.418429242254347)-- (2.1785816316471482,-1.6492751461457986);
\draw [line width=0.8pt,color=rvwvcq] (-2.197707227597139,-1.7053814135720076)-- (0,2.418429242254347);
\draw [line width=0.8pt,color=qqwuqq] (0,1)-- (3.1197964044848,-1.6876179227661);
\draw [shift={(-0.028876364862109104,-0.170870062664042)},line width=0.8pt,color=rvwvcq,fill=rvwvcq,fill opacity=0.05]  plot[domain=3.757353975775618:5.693063605939563,variable=\t]({1*2.6567936309049984*cos(\t r)+0*2.6567936309049984*sin(\t r)},{0*2.6567936309049984*cos(\t r)+1*2.6567936309049984*sin(\t r)});
\draw [line width=0.8pt,color=rvwvcq] (0,2.418429242254347)-- (2.1785816316471482,-1.6492751461457986);
\draw [line width=0.8pt,color=rvwvcq] (-2.197707227597139,-1.7053814135720076)-- (0,2.418429242254347);
\draw [line width=0.8pt,color=ffqqqq] (0,3)-- (-1.4614876107779,-3.1682507742901);
\draw [line width=0.8pt,color=ffqqqq] (-1.4614876107779,-3.1682507742901)-- (0,1);
\draw [line width=0.8pt,color=ffqqqq] (1.4614876107779,-3.1682507742901)-- (0,3);
\draw [line width=0.8pt,color=ffqqqq] (0,1)-- (1.4614876107779,-3.1682507742901);
\draw [line width=0.8pt,color=qqwuqq] (0,1)-- (2.3202546646618,0.3260427553064);
\draw [line width=0.8pt,color=qqwuqq] (2.3202546646618,0.3260427553064)-- (3.1197964044848,-1.6876179227661);
\draw [line width=0.8pt,color=qqwuqq] (3.1197964044848,-1.6876179227661)-- (0,1);
\draw [line width=0.8pt,color=qqwuqq] (0,1)-- (-3.1197964044848,-1.6876179227661);
\draw [line width=0.8pt,color=qqwuqq] (-3.1197964044848,-1.6876179227661)-- (-2.3202546646618,0.3260427553);
\draw [line width=0.8pt,color=qqwuqq] (-2.3202546646618,0.3260427553)-- (0,1);
\begin{scriptsize}
\draw[color=black] (-1.7959046625747865,1.8733040851489817) node {\normalsize{$\sigma_1$}};
\draw[color=black] (2.186997708024663,1.5771775148441896) node {\normalsize{$\sigma_2$}};
\draw[color=black] (0.06969273034540183,3.398355922218662) node {\normalsize{$l$}};
\draw [fill=sexdts] (0,1) circle (1pt);
\draw[color=black] (0.3,2.35) node {\normalsize{$P_1$}};
\draw[color=black] (0.15853070143683937,1.2810509445393972) node {\normalsize{$P_2$}};;
\end{scriptsize}
\end{tikzpicture}
    \caption{Example$(vii)$: The red, blue and green planes correspond to the $k$-spaces of type $1$, $2$ and $3$ in $\PG(n,q)/\gamma$, respectively.}

\end{figure}
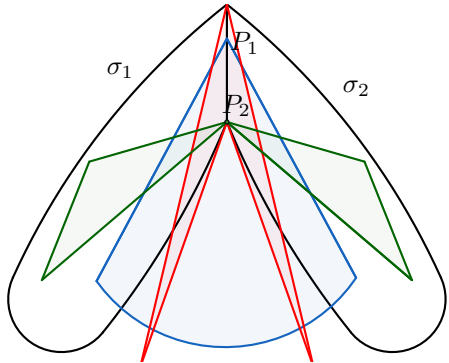

\item[$(viii)$]  There is a $(k-3)$-space $\gamma$ contained in all $k$-spaces of $\mathcal{S}$. In the quotient space  $\PG(n,q)/ \gamma$, the set of planes corresponding to the elements of $\mathcal{S}$ is the set of planes of example $IX$ in \cite{EKRplanes}: Let $\Psi$ be an $(n-k+2)$-space, disjoint to $\gamma$, in $\PG(n,q)$, and let $l$ be a line and $\sigma$ a solid skew to $l$, both in $\Psi$. Denote $\langle l, \sigma \rangle$ by  $\rho$. Let $P_1$ and $P_2$ be two points on $l$ and let $\mathcal{R}_1$ and $\mathcal{R}_2$ be a regulus and its opposite regulus in $\sigma$.
Then $\mathcal{S}$ is the set containing all $k$-spaces through $\langle \gamma,l\rangle$ (type 1), all $k$-spaces through $\langle \gamma, P_1 \rangle $ in the $(k+1)$-space generated by $\gamma,l$ and a fixed line of $\mathcal{R}_1$ (type 2), and all $k$-spaces through $\langle \gamma, P_2 \rangle$ in the $(k+1)$-space generated by $\gamma,l$ and a fixed line of $\mathcal{R}_2$ (type 3).
Then $|\mathcal{S}|=\theta_{n-k}+ 2q^3+2q^2$.\\

\textnormal{In Lemma \ref{maxmaarten9}, we prove that the set $\mathcal{S}$ is maximal.}

\begin{figure}[h!]
    \centering
    \definecolor{qqwuqq}{rgb}{0,0.39215686274509803,0}
\definecolor{rvwvcq}{rgb}{0.08235294117647059,0.396078431372549,0.7529411764705882}
\definecolor{ccqqqq}{rgb}{0.8,0,0}
\begin{tikzpicture}[line cap=round,line join=round,>=triangle 45,x=1cm,y=1cm,scale=0.45]
\clip(-2.422798262991203,-6.267715333945327) rectangle (18.393532208113562,5.813685632891923);
\fill[line width=0.8pt,color=ccqqqq,fill=ccqqqq,fill opacity=0.03] (1.8214779402454433,-2.1802868460189853) -- (0.36301324969468696,2.3211473840759442) -- (7.5112908070854285,3.041376860891131) -- cycle;
\fill[line width=0.8pt,color=ccqqqq,fill=ccqqqq,fill opacity=0.03] (7.5112908070854285,3.041376860891131) -- (-0.3932277009612608,0.736642535082529) -- (1.8214779402454433,-2.1802868460189853) -- cycle;
\fill[line width=0.8pt,color=rvwvcq,fill=rvwvcq,fill opacity=0.04] (7.5112908070854285,3.0413768608911305) -- (12.756098492367299,-0.6836085781483103) -- (9.767146163584266,-3.1864060100810914) -- cycle;
\fill[line width=0.8pt,color=rvwvcq,fill=rvwvcq,fill opacity=0.04] (7.5112908070854285,3.0413768608911305) -- (13.201103673925418,-1.4420516322834165) -- (10.266168555903526,-4.0168720118977195) -- cycle;
\fill[line width=0.8pt,color=rvwvcq,fill=rvwvcq,fill opacity=0.04] (7.5112908070854285,3.0413768608911305) -- (13.738706660811122,-2.2354051085197035) -- (10.893800227391129,-4.648173855850587) -- cycle;
\fill[line width=0.8pt,color=qqwuqq,fill=qqwuqq,fill opacity=0.04] (3.247611861749814,-0.8714930572966189) -- (9.653973500610615,-2.0722524244967078) -- (12.232080316885316,-4.799885261592406) -- cycle;
\fill[line width=0.8pt,color=qqwuqq,fill=qqwuqq,fill opacity=0.04] (3.247611861749814,-0.8714930572966189) -- (10.57226608354998,-1.424045895363037) -- (13.192080316885315,-4.119885261592408) -- cycle;
\fill[line width=0.8pt,color=qqwuqq,fill=qqwuqq,fill opacity=0.04] (3.247611861749814,-0.8714930572966189) -- (11.490558666489346,-0.7938451031497469) -- (14.029367572262885,-3.296642535082529) -- cycle;
\fill[line width=0.8pt,color=ccqqqq,fill=ccqqqq,fill opacity=0.05] (1.8214779402454433,-2.1802868460189853) -- (2.1275754678918983,4.373801392999234) -- (7.5112908070854285,3.0413768608911305) -- cycle;
\draw [rotate around={31.607502246248927:(11.679618904153337,-2.648436005948856)},line width=0.8pt] (11.679618904153337,-2.648436005948856) ellipse (3.8251907238116476cm and 2.510733927770601cm);
\draw [line width=0.8pt] (14.029367572262885,-3.296642535082529)-- (11.490558666489346,-0.7938451031497469);
\draw [line width=0.8pt] (13.192080316885315,-4.119885261592408)-- (10.57226608354998,-1.424045895363037);
\draw [line width=0.8pt] (12.232080316885316,-4.799885261592406)-- (9.653973500610615,-2.0722524244967078);
\draw [line width=0.8pt] (13.738706660811122,-2.2354051085197035)-- (10.893800227391129,-4.648173855850587);
\draw [line width=0.8pt] (13.201103673925418,-1.4420516322834165)-- (10.266168555903526,-4.0168720118977195);
\draw [line width=0.8pt] (12.756098492367299,-0.6836085781483103)-- (9.767146163584266,-3.1864060100810914);
\draw [line width=0.8pt] (1.8214779402454433,-2.1802868460189853)-- (7.5112908070854285,3.041376860891131);
\draw [line width=0.8pt,color=ccqqqq] (1.8214779402454433,-2.1802868460189853)-- (0.36301324969468696,2.3211473840759442);
\draw [line width=0.8pt,color=ccqqqq] (0.36301324969468696,2.3211473840759442)-- (7.5112908070854285,3.041376860891131);
\draw [line width=0.8pt,color=ccqqqq] (7.5112908070854285,3.041376860891131)-- (-0.3932277009612608,0.736642535082529);
\draw [line width=0.8pt,color=ccqqqq] (-0.3932277009612608,0.736642535082529)-- (1.8214779402454433,-2.1802868460189853);
\draw [line width=0.8pt,color=rvwvcq] (7.5112908070854285,3.0413768608911305)-- (12.756098492367299,-0.6836085781483103);
\draw [line width=0.8pt,color=rvwvcq] (9.767146163584266,-3.1864060100810914)-- (7.5112908070854285,3.0413768608911305);
\draw [line width=0.8pt,color=rvwvcq] (7.5112908070854285,3.0413768608911305)-- (13.201103673925418,-1.4420516322834165);
\draw [line width=0.8pt,color=rvwvcq] (10.266168555903526,-4.0168720118977195)-- (7.5112908070854285,3.0413768608911305);
\draw [line width=0.8pt,color=rvwvcq] (7.5112908070854285,3.0413768608911305)-- (13.738706660811122,-2.2354051085197035);
\draw [line width=0.8pt,color=rvwvcq] (10.893800227391129,-4.648173855850587)-- (7.5112908070854285,3.0413768608911305);
\draw [line width=0.8pt,color=qqwuqq] (3.247611861749814,-0.8714930572966189)-- (9.653973500610615,-2.0722524244967078);
\draw [line width=0.8pt,color=qqwuqq] (12.232080316885316,-4.799885261592406)-- (3.247611861749814,-0.8714930572966189);
\draw [line width=0.8pt,color=qqwuqq] (3.247611861749814,-0.8714930572966189)-- (10.57226608354998,-1.424045895363037);
\draw [line width=0.8pt,color=qqwuqq] (13.192080316885315,-4.119885261592408)-- (3.247611861749814,-0.8714930572966189);
\draw [line width=0.8pt,color=qqwuqq] (3.247611861749814,-0.8714930572966189)-- (11.490558666489346,-0.7938451031497469);
\draw [line width=0.8pt,color=qqwuqq] (14.029367572262885,-3.296642535082529)-- (3.247611861749814,-0.8714930572966189);
\draw [line width=0.8pt,color=ccqqqq] (1.8214779402454433,-2.1802868460189853)-- (2.1275754678918983,4.373801392999234);
\draw [line width=0.8pt,color=ccqqqq] (2.1275754678918983,4.373801392999234)-- (7.5112908070854285,3.0413768608911305);
\begin{scriptsize}
\draw[color=black] (7.78072795489226,-3.55633798490686) node {\normalsize{$\sigma$}};
\draw[color=black] (13.359397566381254,-1.0957850359612173) node {\normalsize{$\mathcal{R}_2$}};
\draw[color=black] (13.50963409138269,-3.704716127661905) node {\normalsize{$\mathcal{R}_1$}};
\draw[color=black] (5.18352196210798,0.5026445765596264) node {\normalsize{$l$}};
\draw [fill=black] (3.247611861749814,-0.8714930572966189) circle (2.5pt);
\draw[color=black] (3.1992645120821055,-0.08528355678137356) node {\normalsize{$P_1$}};
\draw [fill=black] (7.5112908070854285,3.013768608911305) circle (2.5pt);
\draw[color=black] (7.71896203714104,3.4974035057653454) node {\normalsize{$P_2$}};
\end{scriptsize}
\end{tikzpicture}

    \caption{ Example($viii$): the red, green and blue planes correspond to the $k$-spaces of type $1$, $2$, $3$ in $\PG(n,q)/\gamma$, respectively.}

\end{figure}
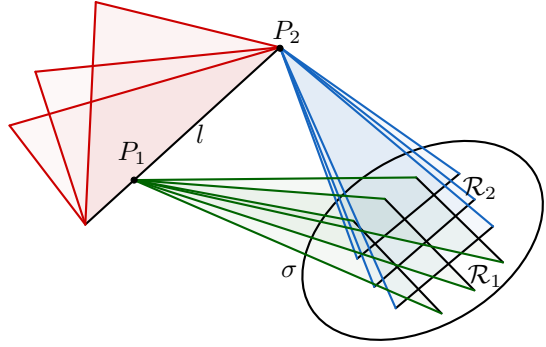

   \item[$(ix)$]  There is a $(k-3)$-space $\gamma$ contained in all $k$-spaces of $\mathcal{S}$. In the quotient space  $\PG(n,q)/ \gamma$, the set of planes corresponding to the elements of $\mathcal{S}$ is the set of planes of example $VII$ in \cite{EKRplanes}: 
    Let $\Psi$ be an $(n-k+2)$-space, disjoint to $\gamma$ in $\PG(n,q)$ and let $\rho$ be a $5$-space in $\Psi$. Consider a line $l$ and a $3$-space $\sigma$
disjoint to $l$. Choose three points $P_1$, $P_2$, $P_3$ on $l$ and choose four non-coplanar
points $Q_1$, $Q_2$, $Q_3$, $Q_4$ in $\sigma$. Denote $l_1= Q_1Q_2$, $\bar{l}_1=Q_3Q_4$, $l_2=Q_1Q_3$, $\bar{l}_2=Q_2Q_4$, $l_3=Q_1Q_4$, and $\bar{l}_3=Q_2Q_3$. Then $\mathcal{S}$ is the set containing all $k$-spaces through $\langle \gamma,l \rangle$ (type 0) and all $k$-spaces through $\langle \gamma, P_i \rangle $ in $\langle \gamma,l,l_i \rangle$ or in $\langle \gamma,l, \bar{l}_i \rangle$, $i=1,2,3$ (type $i$). Then $|\mathcal{S}|=\theta_{n-k}+ 6q^2$.\\

\textnormal{In Lemma \ref{maxmaarten7}, we prove that the set $\mathcal{S}$ is maximal.}

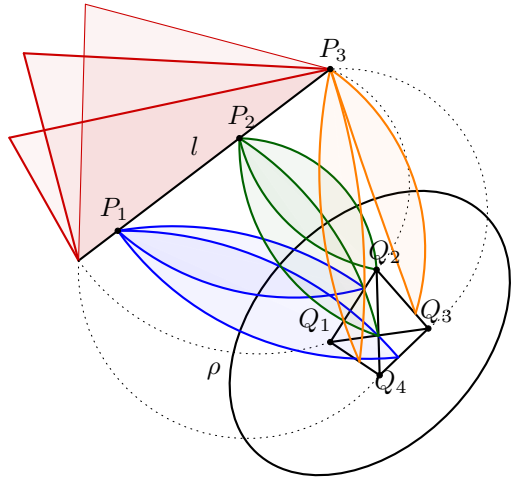
\begin{figure}[h!]
    \centering
\definecolor{ffxfqq}{rgb}{1,0.4980392156862745,0}
\definecolor{qqwuqq}{rgb}{0,0.39215686274509803,0}
\definecolor{qqqqff}{rgb}{0,0,1}
\definecolor{ccqqqq}{rgb}{0.8,0,0}
\begin{tikzpicture}[line cap=round,line join=round,>=triangle 45,x=1cm,y=1cm,scale=0.50]
\clip(-1.12,-7.7) rectangle (20.08,5.64);
\fill[line width=0.8pt,color=ccqqqq,fill=ccqqqq,fill opacity=0.04] (4.3,-1.66) -- (2.48,1.6) -- (10.92,3.42) -- cycle;
\fill[line width=0.8pt,color=ccqqqq,fill=ccqqqq,fill opacity=0.05] (4.3,-1.66) -- (2.86,3.84) -- (10.92,3.42) -- cycle;
\fill[line width=0.4pt,color=ccqqqq,fill=ccqqqq,fill opacity=0.05] (4.3,-1.66) -- (4.48,5.14) -- (10.92,3.42) -- cycle;
\draw [rotate around={46.78196059032664:(11.96,-3.58)},line width=0.8pt] (11.96,-3.58) ellipse (4.354353088322431cm and 2.970789595003832cm);
\draw [line width=0.8pt] (13.5,-3.46)-- (12.22,-4.7);
\draw [line width=0.8pt] (10.92,-3.82)-- (12.22,-4.7);
\draw [line width=0.8pt] (10.92,-3.82)-- (13.5,-3.46);
\draw [line width=0.8pt] (12.14,-1.9)-- (12.22,-4.7);
\draw [line width=0.8pt] (12.14,-1.9)-- (13.5,-3.46);
\draw [line width=0.8pt] (12.14,-1.9)-- (10.92,-3.82);
\draw [line width=0.8pt] (10.92,3.42)-- (4.3,-1.66);
\draw [line width=0.8pt,color=ccqqqq] (4.3,-1.66)-- (2.48,1.6);
\draw [line width=0.8pt,color=ccqqqq] (2.48,1.6)-- (10.92,3.42);
\draw [line width=0.8pt,color=ccqqqq] (4.3,-1.66)-- (2.86,3.84);
\draw [line width=0.8pt,color=ccqqqq] (2.86,3.84)-- (10.92,3.42);
\draw [line width=0.4pt,color=ccqqqq] (4.3,-1.66)-- (4.48,5.14);
\draw [line width=0.4pt,color=ccqqqq] (4.48,5.14)-- (10.92,3.42);
\draw [shift={(9.68045059857165,0.33585521245440064)},line width=0.4pt,dotted]  plot[domain=-0.7377930686569201:1.188644268276375,variable=\t]({1*3.323918138150528*cos(\t r)+0*3.323918138150528*sin(\t r)},{0*3.323918138150528*cos(\t r)+1*3.323918138150528*sin(\t r)});
\draw [shift={(11.242306382978724,-0.38288510638297807)},line width=0.4pt,dotted]  plot[domain=-0.9378060479488051:1.6553473884899492,variable=\t]({1*3.8165189029871582*cos(\t r)+0*3.8165189029871582*sin(\t r)},{0*3.8165189029871582*cos(\t r)+1*3.8165189029871582*sin(\t r)});
\draw [shift={(8.739742725880552,-1.9301439509954057)},line width=0.4pt,dotted]  plot[domain=3.0808208208239254:5.610963568242939,variable=\t]({1*4.447953824655634*cos(\t r)+0*4.447953824655634*sin(\t r)},{0*4.447953824655634*cos(\t r)+1*4.447953824655634*sin(\t r)});
\draw [shift={(9.039768177549306,1.6419746923038883)},line width=0.4pt,dotted]  plot[domain=3.7500689134373237:5.043923480198164,variable=\t]({1*5.7765421530119925*cos(\t r)+0*5.7765421530119925*sin(\t r)},{0*5.7765421530119925*cos(\t r)+1*5.7765421530119925*sin(\t r)});
\draw [shift={(9.978973525642951,4.338256844853014)},line width=0.8pt,color=qqqqff,fill=qqqqff,fill opacity=0.04]  plot[domain=3.983673875246295:4.980422970007183,variable=\t]({1*6.979453416579155*cos(\t r)+0*6.979453416579155*sin(\t r)},{0*6.979453416579155*cos(\t r)+1*6.979453416579155*sin(\t r)});
\draw [shift={(6.771538622028463,-9.339351168482855)},line width=0.8pt,color=qqqqff,fill=qqqqff,fill opacity=0.04]  plot[domain=0.9416973490179988:1.7392141890558928,variable=\t]({1*8.592292485546125*cos(\t r)+0*8.592292485546125*sin(\t r)},{0*8.592292485546125*cos(\t r)+1*8.592292485546125*sin(\t r)});
\draw [shift={(11.904973962833898,3.7929215292453122)},line width=0.8pt,color=qqqqff,fill=qqqqff,fill opacity=0.05]  plot[domain=3.7584141463454785:4.8124196388898834,variable=\t]({1*8.058758059412492*cos(\t r)+0*8.058758059412492*sin(\t r)},{0*8.058758059412492*cos(\t r)+1*8.058758059412492*sin(\t r)});
\draw [shift={(5.477774135225141,-10.333993582067006)},line width=0.8pt,color=qqqqff,fill=qqqqff,fill opacity=0.04]  plot[domain=0.7013759373289415:1.586272540726835,variable=\t]({1*9.466497950295246*cos(\t r)+0*9.466497950295246*sin(\t r)},{0*9.466497950295246*cos(\t r)+1*9.466497950295246*sin(\t r)});
\draw [shift={(3.945482853982031,-5.5104807104230575)},line width=0.8pt,color=qqwuqq,fill=qqwuqq,fill opacity=0.04]  plot[domain=0.22091403566867648:0.9971503564220695,variable=\t]({1*8.450129388826488*cos(\t r)+0*8.450129388826488*sin(\t r)},{0*8.450129388826488*cos(\t r)+1*8.450129388826488*sin(\t r)});
\draw [shift={(13.893323572911756,1.4279319865011544)},line width=0.8pt,color=qqwuqq,fill=qqwuqq,fill opacity=0.05]  plot[domain=3.111931909445377:4.389317789824955,variable=\t]({1*5.364331877868283*cos(\t r)+0*5.364331877868283*sin(\t r)},{0*5.364331877868283*cos(\t r)+1*5.364331877868283*sin(\t r)});
\draw [shift={(8.436152257405945,-2.122270905104563)},line width=0.8pt,color=qqwuqq,fill=qqwuqq,fill opacity=0.05]  plot[domain=0.05993892686372905:1.545136957605966,variable=\t]({1*3.7105110774090244*cos(\t r)+0*3.7105110774090244*sin(\t r)},{0*3.7105110774090244*cos(\t r)+1*3.7105110774090244*sin(\t r)});
\draw [shift={(13.052105182993095,2.6546902574728084)},line width=0.8pt,color=qqwuqq,fill=qqwuqq,fill opacity=0.04]  plot[domain=3.3735141515354634:4.514747040113817,variable=\t]({1*4.6451199345507295*cos(\t r)+0*4.6451199345507295*sin(\t r)},{0*4.6451199345507295*cos(\t r)+1*4.6451199345507295*sin(\t r)});
\draw [shift={(7.585103617309275,-1.3653923097972551)},line width=0.8pt,color=ffxfqq,fill=ffxfqq,fill opacity=0.04]  plot[domain=-0.29707012409213096:0.9621600518887308,variable=\t]({1*5.832796365548037*cos(\t r)+0*5.832796365548037*sin(\t r)},{0*5.832796365548037*cos(\t r)+1*5.832796365548037*sin(\t r)});
\draw [shift={(221.395736681109,72.47857889370302)},line width=0.8pt,color=ffxfqq]  plot[domain=3.4586322264941622:3.4896430084820236,variable=\t]({1*221.5155142428479*cos(\t r)+0*221.5155142428479*sin(\t r)},{0*221.5155142428479*cos(\t r)+1*221.5155142428479*sin(\t r)});
\draw [shift={(23.29605102888687,0.7309828202252172)},line width=0.8pt,color=ffxfqq,fill=ffxfqq,fill opacity=0.04]  plot[domain=2.9276422036464695:3.5537595855806248,variable=\t]({1*12.664811584178251*cos(\t r)+0*12.664811584178251*sin(\t r)},{0*12.664811584178251*cos(\t r)+1*12.664811584178251*sin(\t r)});
\draw [shift={(-6.061450494184168,-2.1881514582019856)},line width=0.8pt,color=ffxfqq,fill=ffxfqq,fill opacity=0.04]  plot[domain=-0.12075796681704354:0.31897444886455106,variable=\t]({1*17.883540579666004*cos(\t r)+0*17.883540579666004*sin(\t r)},{0*17.883540579666004*cos(\t r)+1*17.883540579666004*sin(\t r)});
\begin{scriptsize}
\draw[color=black] (7.86,-4.59) node {\normalsize{$\rho$}};
\draw [fill=black] (13.5,-3.46) circle (2pt);
\draw[color=black] (13.72,-3.01) node {\normalsize{$Q_3$}};
\draw [fill=black] (12.22,-4.7) circle (2pt);
\draw[color=black] (12.52,-4.9) node {\normalsize{$Q_4$}};
\draw [fill=black] (10.92,-3.82) circle (2pt);
\draw[color=black] (10.52,-3.25) node {\normalsize{$Q_1$}};

\draw [fill=black] (12.14,-1.9) circle (2pt);
\draw[color=black] (12.36,-1.45) node {\normalsize{$Q_2$}};

\draw[color=black] (7.36,1.39) node {\normalsize{$l$}};
\draw [fill=black] (5.331274436025437,-0.86862928474181) circle (2pt);
\draw[color=black] (5.24,-0.29) node {\normalsize{$P_1$}};
\draw [fill=black] (8.531351183671594,1.5870187330893806) circle (2pt);
\draw[color=black] (8.6,2.15) node {\normalsize{$P_2$}};
\draw [fill=black] (10.92,3.42) circle (2pt);
\draw[color=black] (11,3.93) node {\normalsize{$P_3$}};
\end{scriptsize}
\end{tikzpicture}
    \caption{Example$(ix)$: The red, blue, green and orange planes correspond to the $k$-spaces of type $0,1,2$ and $3$ respectively.}
\end{figure}
    
    \item[$(x)$]  $\mathcal{S}$ is the set of all $k$-spaces contained in a fixed $(k+2)$-space $\rho$. Then $|\mathcal{S}|=\qbin{k+3}{2}$.
\end{itemize}
\end{example}
From now on, let $\mathcal{S}$ be a maximal set of $k$-spaces pairwise intersecting in at least a $(k-2)$-space in the projective space $\PG(n,q)$ with $n \geq k+2$. 
We will study these relevant families focusing on the sets $\mathcal{S}$ such that  $|\mathcal{S}|> f(k,q)$.
In Section \ref{confi}, we investigate the sets $\mathcal{S}$ of $k$-spaces in $\PG(n,q)$  such that there is no point contained in all elements of $\mathcal{S}$ and such that $\mathcal{S}$ contains a set of three $k$-spaces that meet in a $(k-4)$-space.
In  Section \ref{noconfi}, we assume again that there is no point contained in all elements of $\mathcal{S}$ and that for any three $k$-spaces $X,Y,Z$ in $\mathcal{S}$, $\dim(X\cap Y\cap Z)\geq k-3$. In Section \ref{sectionsolidpointk}, we investigate the maximal sets $\mathcal{S}$ of $k$-spaces such that there is at least a point contained in all elements of $\mathcal{S}$. We end this article with the Main Theorem \ref{overzicht2} that classifies all sets of $k$-spaces pairwise intersecting in at least a $(k-2)$-space with size larger than $f(k,q)$.




\section{There are three elements of $\mathcal{S}$ that meet in a $(k-4)$-space}\label{confi}

 Suppose there exist three $k$-spaces $A,B,C$ in $\mathcal{S}$ with $\dim(A\cap B\cap C)=k-4$, and suppose that there is no point contained in all elements of $\mathcal{S}$. By the existence of Example \ref{voorbeeldenalgemeen}$(x)$, we may assume that the elements of $\mathcal{S}$ span at least a $(k+3)$-space. In this subsection, we will use the following notation.
 
\begin{notation}\label{notk}
Let $\mathcal{S}$ be a maximal set of $k$-spaces in $\PG(n,q)$ pairwise intersecting in at least a $(k-2)$-space. Let $A,B$ and $C$ in $\mathcal{S}$ be three $k$-spaces with $\pi_{ABC}=A\cap B\cap C$ a $(k-4)$-space. Let $\pi_{AB}=A\cap B$, $ \pi_{AC}= A \cap C$ and $\pi_{BC}=B\cap C$. Let $\mathcal{S}'$ be the set of $k$-spaces of $\mathcal{S}$ not contained in $\langle A,B \rangle$, and let $\alpha$ be the span of all subspaces $D':=D\cap \langle A,B \rangle$, $D\in \mathcal{S}'$.\end{notation}

Note explicitly, by Grassmann's dimension property, that $\pi_{AB}$, $\pi_{BC}$ and $\pi_{AC}$ are $(k-2)$-spaces and $\langle A,B\rangle =\langle B,C \rangle=\langle A,C \rangle $.\\ 


We first present a lemma that will be useful for the later classification results in this section.
\begin{lemma}\label{lemmahoespaceserbuiten}\emph{[Using Notation \ref{notk}]}
If there exist three $k$-spaces $A$, $B$ and $C$  in $\mathcal{S}$, with $\dim(A\cap B\cap C)=k-4$, then a $k$-space of $\mathcal{S}'$ meets $\langle A,B \rangle$ in a $(k-1)$-space. More specifically, it contains $\pi_{ABC}$ and meets $\pi_{AB}, \pi_{AC}$ and $\pi_{BC}$, each in a $(k-3)$-space through $\pi_{ABC}$. 
\end{lemma}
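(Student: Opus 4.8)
The plan is to fix an arbitrary $D \in \mathcal{S}'$, set $\delta := D \cap \langle A,B\rangle$, and establish every assertion by Grassmann dimension counting inside the $(k+2)$-space $\langle A,B\rangle$. The two facts I will use repeatedly are that $D$ meets each of $A,B,C$ in at least a $(k-2)$-space (the defining property of $\mathcal{S}$), and that $A,B,C\subseteq\langle A,B\rangle$, so that $D\cap A,\ D\cap B,\ D\cap C$ all lie inside $\delta$.

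First I would pin down $\dim\delta = k-1$, which is the first assertion. The upper bound $\dim\delta\le k-1$ is immediate since $D\not\subseteq\langle A,B\rangle$. Since $D\cap A\subseteq\delta$ and $\dim(D\cap A)\ge k-2$, we get $\dim\delta\ge k-2$. To exclude $\dim\delta=k-2$: in that case $D\cap A$, $D\cap B$, $D\cap C$ would each be forced to equal $\delta$, whence $\delta\subseteq A\cap B\cap C=\pi_{ABC}$, contradicting $\dim\pi_{ABC}=k-4<k-2$.

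Next I would show that $H_A:=D\cap A$, $H_B:=D\cap B$, $H_C:=D\cap C$ are three pairwise distinct hyperplanes of $\delta$. Each lies in $\delta$ and has dimension at least $k-2$, so each is either $\delta$ itself or a $(k-2)$-dimensional hyperplane of $\delta$. The possibility $H_A=\delta$ (i.e. $\delta\subseteq A$) is ruled out, since it would force $D\cap B=\pi_{AB}$ and $D\cap C=\pi_{AC}$, so that $\langle\pi_{AB},\pi_{AC}\rangle\subseteq\delta$, a subspace of dimension $(k-2)+(k-2)-(k-4)=k$ inside the $(k-1)$-space $\delta$. Thus each $H_\bullet$ is a genuine hyperplane. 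If two coincide, say $H_A=H_B$, then this common hyperplane lies in $A\cap B=\pi_{AB}$ and, having dimension $k-2$, equals $\pi_{AB}\subseteq\delta$; but then $\langle\pi_{AB},H_C\rangle\subseteq\delta$ again has dimension $k$, a contradiction. By symmetry the three hyperplanes are pairwise distinct. This distinctness step is the main obstacle: it is the only place where a degenerate configuration must be turned into a dimension overflow, and it is exactly what keeps $\delta$ from meeting any of the $\pi$'s in too large a subspace.

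Finally, since $H_A\ne H_B$ are distinct hyperplanes of $\delta$ they span $\delta$, so $\dim(H_A\cap H_B)=k-3$; as $H_A\cap H_B=D\cap\pi_{AB}$, this yields a $(k-3)$-space, and likewise for $\pi_{AC}$ and $\pi_{BC}$. The $(k-3)$-spaces $D\cap\pi_{AB}$ and $D\cap\pi_{AC}$ must differ, for a common subspace would lie in $\pi_{AB}\cap\pi_{AC}=\pi_{ABC}$ of dimension $k-4<k-3$; being two distinct hyperplanes of the $(k-2)$-space $H_A$, they meet in a $(k-4)$-space, which equals $D\cap\pi_{AB}\cap\pi_{AC}=D\cap\pi_{ABC}$. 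Hence $\dim(D\cap\pi_{ABC})=k-4$, i.e. $\pi_{ABC}\subseteq D$, and each of $D\cap\pi_{AB}$, $D\cap\pi_{AC}$, $D\cap\pi_{BC}$ contains $\pi_{ABC}$, completing the more precise claim. The remainder beyond the distinctness step is routine bookkeeping with Grassmann's formula.
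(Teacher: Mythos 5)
Your proof is correct and follows essentially the same route as the paper's: both first pin down $\dim(D\cap\langle A,B\rangle)=k-1$ by the identical argument, and then use Grassmann dimension counting together with $\dim \pi_{ABC}=k-4$ to force the finer intersection pattern. Your repackaging of the second half as ``three pairwise distinct hyperplanes of $\delta$'' is a tidy reorganization of the same computation (your distinctness step is equivalent to the paper's exclusion of $\dim(E\cap\pi_{AB})=k-2$), not a genuinely different method.
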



\begin{proof}
Consider a $k$-space $E$ of $\mathcal{S}'$. Clearly,
\begin{equation*}
    k-2 \leq \dim(E \cap \langle A,B \rangle) \leq k-1.
\end{equation*}
If  $\dim(E \cap \langle A,B \rangle)=k-2$, then this $(k-2)$-space has to lie in $A,B$ and $C$, and so in the $(k-4)$-space $\pi_{ABC}$, a contradiction. Hence, we know that $\dim(E \cap \langle A,B \rangle)=k-1$. By the symmetry of the $k$-spaces $A,B$ and $C$, it suffices to prove that $E$ contains $\pi_{ABC}$ and meets $\pi_{AB}$ in a $(k-3)$-space through  $\pi_{ABC}$. Using Grassmann's dimension property we find that 
\begin{equation*}
 \dim(E \cap \pi_{AB})\geq\dim(E\cap A)+\dim(E\cap B)-\dim(E\cap \langle A,B \rangle)\end{equation*}
is $k-2$ or $k-3$. If $\dim(E \cap \pi_{AB})=k-2$, then 
\begin{equation*}
   \dim(E \cap C)\leq \dim(E\cap \pi_{ABC})+\dim(E\cap \langle C, \pi_{AB} \rangle)-\dim(E\cap \pi_{AB})\leq (k-4)+(k-1)-(k-2)=k-3,
\end{equation*}
a contradiction since any two elements of $\mathcal{S}$ meet in at least a $(k-2)$-space. Hence, $\dim(E \cap \pi_{AB})=k-3$, and so \begin{equation*}
\dim(E\cap \pi_{ABC})\geq\dim(E\cap C)+\dim(E\cap \pi_{AB})-\dim(E\cap \langle C,\pi_{AB} \rangle) \geq (k-2)+(k-3)-(k-1)=k-4.
\end{equation*}This implies that the $(k-4)$-space $\pi_{ABC}$ is contained in $E$.
\end{proof}\\

Therefore, let $D$ be a $k$-space of $\mathcal{S'}$. By Lemma \ref{lemmahoespaceserbuiten}, for the remaining part of this section, we will denote by  $D'$  the $ (k-1) $-space $D \cap \langle A, B \rangle $.

\begin{gevolg}\emph{[Using Notation \ref{notk}]}\label{lemmagevolg}
Suppose $\mathcal{S}$ contains three elements $A,B$ and $C$, meeting in a $(k-4)$-space, and $\alpha$ is a $(k+i)$-space. {Up to a suitable labelling of $A,B$ and $C$, we have the following results. }
\begin{itemize}
    \item [$a)$] If $i=-1$, then $\alpha=D\cap \langle A,B \rangle$ for every $D\in \mathcal{S'}$.
    \item [$b)$] If $i=0$, then $\alpha=\langle \rho_1,\rho_2, \rho_3 \rangle$, with $\rho_1$ a $(k-3)$-space in $\pi_{AB}$, $\rho_2$ a $(k-3)$-space in $\pi_{BC}$, $\rho_3=\pi_{AC}$ and $\pi_{ABC}\subset \rho_j, j=1,2,3$. In this case, all elements of $\mathcal{S}'$ contain the $(k-2)$-space $\langle \rho_1,\rho_2 \rangle$. 
    \item [$c)$] If $i=1$, then $\alpha=\langle \rho_1,\rho_2, \rho_3 \rangle$, with $\rho_1$ a $(k-3)$-space in $\pi_{AB}$, $\rho_2=\pi_{BC}$, $\rho_3=\pi_{AC}$ and $\pi_{ABC}\subset \rho_j, j=1,2,3$. In this case, all elements of $\mathcal{S}'$ contain the $(k-3)$-space $ \rho_1$.
    \item [$d)$] If $i=2$, then $\alpha=\langle A,B \rangle$.
\end{itemize}
\end{gevolg}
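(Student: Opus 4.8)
The plan is to collapse the whole statement to a configuration of three pairwise skew lines in $\PG(5,q)$ by passing to the quotient geometry modulo $\pi_{ABC}$. Since $A,B,C$ all contain the $(k-4)$-space $\pi_{ABC}$ and $\langle A,B\rangle$ is a $(k+2)$-space, the quotient $\langle A,B\rangle/\pi_{ABC}$ is isomorphic to $\PG(5,q)$. Under this projection I would write $\bar A,\bar B,\bar C$ for the images of $A,B,C$; a routine dimension count shows these are three solids, and the images $\bar\pi_{AB},\bar\pi_{AC},\bar\pi_{BC}$ of the pairwise intersections are lines. Because $\pi_{AB}\cap\pi_{AC}=\pi_{ABC}$ (and cyclically), these three lines are pairwise skew, and since $\langle\bar\pi_{AB},\bar\pi_{AC}\rangle=\bar A$, $\langle\bar\pi_{AB},\bar\pi_{BC}\rangle=\bar B$ and $\langle\bar\pi_{AC},\bar\pi_{BC}\rangle=\bar C$, they jointly span $\PG(5,q)$. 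Equivalently, writing $W_1,W_2,W_3$ for the underlying $2$-dimensional vector spaces of the three lines, the ambient $6$-dimensional space decomposes as a direct sum $V=W_1\oplus W_2\oplus W_3$.

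Next I would describe, via Lemma \ref{lemmahoespaceserbuiten}, the images $\bar D'$ of the spaces $D'=D\cap\langle A,B\rangle$ for $D\in\mathcal S'$. The lemma forces $D'$ to be a $(k-1)$-space through $\pi_{ABC}$ meeting each of $\pi_{AB},\pi_{AC},\pi_{BC}$ in a $(k-3)$-space through $\pi_{ABC}$; hence $\bar D'$ is a plane meeting each of the three skew lines in exactly one point. The key observation is that these three trace points are automatically in general position: choosing nonzero representing vectors $v_j\in W_j$, linear independence is forced by the direct sum, so $\bar D'=\langle\bar D'\cap\bar\pi_{AB},\,\bar D'\cap\bar\pi_{AC},\,\bar D'\cap\bar\pi_{BC}\rangle$ is precisely the plane they span. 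Thus each $\bar D'$ is determined by, and reconstructs, its triple of trace points, one on each line.

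With this in hand, I would let $S_j$ be the set of trace points on the $j$-th line as $D$ ranges over $\mathcal S'$ (each $S_j$ nonempty, since the elements of $\mathcal S$ span more than $\langle A,B\rangle$, so $\mathcal S'\neq\emptyset$), giving $\bar\alpha=\langle S_1\cup S_2\cup S_3\rangle=\langle\langle S_1\rangle,\langle S_2\rangle,\langle S_3\rangle\rangle$. Each $\langle S_j\rangle$ is either a single point or the whole line, and because the three lines lie in a direct sum the dimensions add: writing $a_j\in\{0,1\}$ for the projective dimension of $\langle S_j\rangle$, one gets $\dim\bar\alpha=a_1+a_2+a_3+2$. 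Since $\dim\alpha=k+i$ projects to $\dim\bar\alpha=i+3$, this yields $i=a_1+a_2+a_3-1$, which already forces $i\in\{-1,0,1,2\}$ and matches each value of $i$ with the number of lines that are fully used.

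Finally I would translate each case back through the quotient. When all $a_j=0$ every $\bar D'$ equals the single plane $\langle S_1,S_2,S_3\rangle$, so $D'=\alpha$ for all $D$, which is case $a)$; when all $a_j=1$ we recover $\bar\alpha=\PG(5,q)$, i.e.\ $\alpha=\langle A,B\rangle$, which is case $d)$. In the mixed cases, each line with $a_j=0$ contributes a fixed point, which pulls back to a fixed $(k-3)$-space through $\pi_{ABC}$ contained in the corresponding $\pi$ and lying in every $D'$; spanning these fixed spaces yields the common $(k-2)$-space when two lines are fixed (case $b)$, with $\langle\rho_1,\rho_2\rangle$) and the common $(k-3)$-space when one line is fixed (case $c)$, with $\rho_1$). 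Using the $S_3$-symmetry among $A,B,C$, which acts as the full symmetric group on the three lines, I can relabel so that the fixed and free lines are exactly those named in $b)$ and $c)$, and the identities $\alpha=\langle\rho_1,\rho_2,\rho_3\rangle$ follow by pulling back $\bar\alpha=\langle\langle S_1\rangle,\langle S_2\rangle,\langle S_3\rangle\rangle$. The only genuine subtlety is the general-position argument for the trace points; once it is secured through the direct-sum decomposition, the dimension bookkeeping and the case analysis are routine.
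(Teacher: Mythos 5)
Your proof is correct, and it takes a genuinely different route from the paper's. The paper argues directly in $\PG(n,q)$: cases $i=-1$ and $i=2$ are read off from Lemma \ref{lemmahoespaceserbuiten}, while for $i=0$ and $i=1$ it exhibits two specific witnesses $D_1',D_2'$ (meeting in a $(k-2)$-space, respectively a $(k-3)$-space), identifies their intersection via Lemma \ref{lemmahoespaceserbuiten}, and deduces that every other $D'$ must pass through the resulting common subspace; the existence of the $(k-3)$-intersecting pair in case $i=1$ is secured by invoking the classification of $(k-1)$-intersecting families (Theorem \ref{basisEKRthm}). You instead quotient by $\pi_{ABC}$, reduce to three pairwise skew lines spanning $\PG(5,q)$, and exploit the direct-sum decomposition $V=W_1\oplus W_2\oplus W_3$ to show that each $\bar D'$ is exactly the span of its three trace points; the single identity $i=a_1+a_2+a_3-1$ then handles all four cases uniformly, with the common fixed subspaces $\rho_1$ (and $\langle\rho_1,\rho_2\rangle$) falling out of the lines with $a_j=0$. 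Your version buys uniformity, avoids the appeal to Theorem \ref{basisEKRthm} and the separate witness-existence arguments, and makes the ``up to relabelling'' clause transparent through the $S_3$-action on the three lines; the paper's version stays closer to the ambient geometry and to the notation reused in the later case-by-case sections. Both rest on the same essential input, namely Lemma \ref{lemmahoespaceserbuiten}, and your general-position step for the trace points is correctly justified by the direct sum.
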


\begin{proof}
For $i=-1$ and $i=2$, the corollary follows immediately from Lemma \ref{lemmahoespaceserbuiten}. Hence we start with the case that $\alpha$ is a $k$-space. Consider two elements of $\mathcal{S'}$, say $D_1$, $D_2$, meeting $\langle A, B \rangle$ in two different $(k-1)$-spaces $D'_1, D'_2$. These two elements of $\mathcal{S'}$ exist, as otherwise $\dim(\alpha)=k-1$. Since $D'_1$ and $D'_2$ span the $k$-space $\alpha$, they meet in a $(k-2)$-space. By Lemma \ref{lemmahoespaceserbuiten}, this $(k-2)$-space contains $\pi_{ABC}$, together with a $(k-3)$-space $\rho_1$ through $\pi_{ABC}$ in $\pi_{XY}$ and  a $(k-3)$-space $\rho_2$ through $\pi_{ABC}$ in $\pi_{YZ}$, with $\{X,Y,Z\}=\{A,B,C\}$. By Lemma \ref{lemmahoespaceserbuiten}, every other element of $\mathcal{S}'$ will meet $\langle A,B \rangle$ in a $(k-1)$-space through this $(k-2)$-space  $\pi_1 \cap \pi_2 = \langle \rho_1 , \rho_2 \rangle$, which proves the statement.

Suppose now that $\alpha$ is a $(k+1)$-space. Then, we consider two elements $D_3,D_4$ of $\mathcal{S}'$ meeting $\langle A, B \rangle$ in two $(k-1)$-spaces $D'_3, D'_4$ such that $\dim(D'_3\cap D'_4)=k-3$. These elements of $\mathcal{S}'$ exist as otherwise all elements of $\mathcal{S}'$ correspond to $(k-1)$-spaces pairwise intersecting in a $(k-2)$-space. But then, since these $(k-1)$-spaces span a $(k+1)$-space, they form a $(k-2)$-pencil (see Theorem \ref{basisEKRthm}). Using Lemma \ref{lemmahoespaceserbuiten}, and the proof above of the case $\dim(\alpha)=k$ or $i=0$, it follows that $\alpha$
 would be a $k$-space. Now, again by Lemma \ref{lemmahoespaceserbuiten}, we see that $D'_3\cap D'_4$ contains $\pi_{ABC}$ and a $(k-3)$-space $\rho_1$ through $\pi_{ABC}$ in $\pi_{XY}$, with $\{X,Y,Z\}=\{A,B,C\}$. Using dimension properties
 and the fact that $D'_3 \cap D'_4=\rho_1$, we see that every other element of $\mathcal{S}'$ will contain $\rho_1$, which proves the statement.
 \end{proof}\\

We distinguish between several cases depending on the dimension of $\alpha=\langle \,D\cap \langle A,B \rangle\,|D\in \mathcal{S}'\rangle$.

\subsection{$\alpha$ is a  $(k-1)$-space}\label{alphaplane}

\begin{prop}\label{prop5}\emph{[Using Notation \ref{notk}]} If $\mathcal{S}$ contains three $k$-spaces that meet in a $(k-4)$-space and $\dim(\alpha)=k-1$, then $\mathcal{S}$ is Example \ref{voorbeeldenalgemeen}$(v)$.
\end{prop}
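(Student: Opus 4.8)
The plan is to read the structure of $\mathcal{S}$ directly off Corollary \ref{lemmagevolg}$(a)$ and then close the argument using the maximality of $\mathcal{S}$. Set $\rho:=\langle A,B\rangle$, a $(k+2)$-space. Since (by the standing assumption of this section) the elements of $\mathcal{S}$ span at least a $(k+3)$-space, not all of them lie in $\rho$, so $\mathcal{S}'\neq\emptyset$ and $\alpha$ is genuinely a $(k-1)$-space; moreover, by Corollary \ref{lemmagevolg}$(a)$, this $(k-1)$-space equals $D\cap\rho$ for \emph{every} $D\in\mathcal{S}'$. In particular $\alpha\subset D$ and $D\not\subset\rho$ for each $D\in\mathcal{S}'$, so every element of $\mathcal{S}'$ is a $k$-space through $\alpha$ not contained in $\rho$, i.e.\ exactly a type $2$ space of Example \ref{voorbeeldenalgemeen}$(v)$ with respect to $\rho$ and $\alpha$.

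Next I would treat the elements of $\mathcal{S}$ lying in $\rho$. Let $F\in\mathcal{S}$ with $F\subset\rho$ and fix any $D\in\mathcal{S}'$. Because $F\subset\rho$ and $D\cap\rho=\alpha$, we have $F\cap D=F\cap(D\cap\rho)=F\cap\alpha$, and hence $\dim(F\cap\alpha)=\dim(F\cap D)\geq k-2$, the last inequality holding since $F,D\in\mathcal{S}$. Thus $F$ is a $k$-space of $\rho$ meeting $\alpha$ in at least a $(k-2)$-space, that is, a type $1$ space of Example \ref{voorbeeldenalgemeen}$(v)$ (in particular $A$, $B$, $C$ are of type $1$). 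Combining the two cases, every element of $\mathcal{S}$ is of type $1$ or type $2$, so $\mathcal{S}$ is contained in the family $\mathcal{T}$ of Example \ref{voorbeeldenalgemeen}$(v)$ determined by $\rho$ and $\alpha$. Finally I would invoke maximality: $\mathcal{T}$ itself is pairwise intersecting in at least a $(k-2)$-space (two type $1$ spaces meet in at least a $(k-2)$-space as both lie in the $(k+2)$-space $\rho$; a type $1$ space $F$ and a type $2$ space $D$ meet in $F\cap\alpha$ of dimension at least $k-2$; two type $2$ spaces share $\alpha$), so since $\mathcal{S}\subseteq\mathcal{T}$ is maximal, no element of $\mathcal{T}$ lies outside $\mathcal{S}$, giving $\mathcal{S}=\mathcal{T}$, which is precisely Example \ref{voorbeeldenalgemeen}$(v)$.

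The genuine content here is already carried by Corollary \ref{lemmagevolg}$(a)$: once one knows that every space of $\mathcal{S}'$ meets $\rho$ in the \emph{same} $(k-1)$-space $\alpha$, the remainder is a one-line intersection computation together with the routine maximality sweep. Accordingly, I do not expect a serious obstacle; the only points requiring care are verifying that $\mathcal{S}'\neq\emptyset$ (so that $\alpha$ is well defined as a $(k-1)$-space, which is exactly where the $(k+3)$-span hypothesis is used) and justifying the identity $F\cap D=F\cap\alpha$, which rests on $F\subset\rho$ and $D\cap\rho=\alpha$.
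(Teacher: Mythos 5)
Your proposal is correct and follows essentially the same route as the paper's proof: read off from Corollary \ref{lemmagevolg}$(a)$ that every element of $\mathcal{S}'$ meets $\langle A,B\rangle$ exactly in $\alpha$, deduce that every element of $\mathcal{S}$ inside $\langle A,B\rangle$ meets $\alpha$ in at least a $(k-2)$-space, check that the resulting family is admissible, and conclude by maximality. Your write-up is merely more explicit than the paper's (in particular the identity $F\cap D=F\cap\alpha$ and the verification that the target family is pairwise $(k-2)$-intersecting), but there is no substantive difference.
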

\begin{proof}
From Corollary \ref{lemmagevolg}, we have that for all $ D \in \mathcal{S}', D \cap \langle A,B \rangle = \alpha$, so all the $k$-spaces in $\mathcal{S}'$ meet $\langle A,B \rangle$ in $\alpha$. 
As a $k$-space of $\mathcal{S}$ in $\langle A,B \rangle$ needs to have at least a $(k-2)$-space in common with every $D\in \mathcal{S'}$, we find that every $k$-space of $\mathcal{S}$ in $\langle A,B \rangle$ meets $\alpha$ in at least a $(k-2)$-space. Note that the condition that every two $k$-spaces of $\mathcal{S}$ in $\langle A,B \rangle$ meet in at least a $(k-2)$-space is fulfilled. Hence, $\mathcal{S}$ is Example \ref{voorbeeldenalgemeen}$(v)$ with $\rho = \langle A,B \rangle$.
\end{proof}


\subsection{$\alpha$ is a $k$-space}\label{alphasolid}
If $\alpha$ is a $k$-space, we can suppose w.l.o.g., by  Corollary \ref{lemmagevolg}, that $\alpha = \langle \pi_{AB},P_{AC},P_{BC}\rangle$ with $P_{AC}$ and $P_{BC}$  points in $\pi_{AC}\setminus \pi_{ABC}$ and $\pi_{BC}\setminus \pi_{ABC}$, respectively. 

\begin{prop}\label{prop4}\emph{[Using Notation \ref{notk}]} If $\mathcal{S}$ contains three $k$-spaces that meet in a $(k-4)$-space and $\dim(\alpha)=k$, then 
$\mathcal{S}$ is Example \ref{voorbeeldenalgemeen}$(iv)$.
\end{prop}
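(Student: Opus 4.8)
The plan is to set $\rho=\langle A,B\rangle$, a $(k+2)$-space, and to exhibit inside it the $k$-space $\alpha$ together with a $(k-2)$-space $\pi$ playing the roles required by Example~\ref{voorbeeldenalgemeen}$(iv)$, and then to prove that $\mathcal{S}$ coincides with that example. By Corollary~\ref{lemmagevolg}$(b)$, every $D\in\mathcal{S}'$ contains a fixed $(k-2)$-space, which I call $\pi$, and meets $\rho$ in a $(k-1)$-space $D'\subseteq\alpha$ through $\pi$. Since $\dim\alpha=k$ strictly exceeds $\dim D'=k-1$, the traces $D'$ cannot all coincide (otherwise $\alpha$ would be their common $(k-1)$-space); hence at least two distinct hyperplanes $D_1',D_2'$ of $\alpha$ through $\pi$ occur among them, and being distinct hyperplanes of $\alpha$ both containing the $(k-2)$-space $\pi$, they satisfy $D_1'\cap D_2'=\pi$.

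First I would prove the inclusion $\mathcal{S}\subseteq$ Example~\ref{voorbeeldenalgemeen}$(iv)$ by splitting $\mathcal{S}$ into $\mathcal{S}'$ and the elements lying in $\rho$. Each $D\in\mathcal{S}'$ satisfies $D\not\subset\rho$ and contains the $(k-1)$-space $D'$ of $\alpha$ through $\pi$, so $D$ is of type~$3$. For an element $E\in\mathcal{S}$ with $E\subset\rho$, I use that $E\cap D=E\cap D'$ must be at least a $(k-2)$-space for every $D\in\mathcal{S}'$, and analyse $\dim(E\cap\alpha)\in\{k-2,k-1,k\}$, the lower bound following from Grassmann's identity for the two $k$-spaces $E,\alpha$ inside the $(k+2)$-space $\rho$. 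If $\dim(E\cap\alpha)=k$ then $E=\alpha\supseteq\pi$, of type~$2$. If $\dim(E\cap\alpha)=k-1$, then $E\cap\alpha$ is a hyperplane $\beta$ of $\alpha$, which is of type~$2$ when $\pi\subseteq\beta$ and of type~$1$ otherwise; no intersection condition is lost here, since any two hyperplanes of the $k$-space $\alpha$ meet in at least a $(k-2)$-space.

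The delicate case is $\dim(E\cap\alpha)=k-2$. Here $E\cap D'=(E\cap\alpha)\cap D'$, so $E\cap\alpha\subseteq D'$ must hold for every $D'$; intersecting over the two distinct traces gives $E\cap\alpha\subseteq D_1'\cap D_2'=\pi$, and as $\dim(E\cap\alpha)=k-2=\dim\pi$ this forces $E\cap\alpha=\pi$, so $\pi\subseteq E$ and $E$ is again of type~$2$. This settles $\mathcal{S}\subseteq$ Example~\ref{voorbeeldenalgemeen}$(iv)$. Finally I invoke maximality: Example~\ref{voorbeeldenalgemeen}$(iv)$, built from this $\rho,\alpha,\pi$, is a family of $k$-spaces pairwise meeting in at least a $(k-2)$-space and containing $\mathcal{S}$; since $\mathcal{S}$ is maximal it admits no proper extension of this kind, whence $\mathcal{S}$ equals Example~\ref{voorbeeldenalgemeen}$(iv)$.

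I expect the main obstacle to be precisely the subcase $\dim(E\cap\alpha)=k-2$, where one must genuinely exploit the existence of two distinct traces $D_1',D_2'$, guaranteed by $\dim\alpha=k$ via Corollary~\ref{lemmagevolg}$(b)$, to collapse $E\cap\alpha$ onto $\pi$; the remaining cases are automatic once the distinguished $(k-2)$-space $\pi$ has been correctly identified.
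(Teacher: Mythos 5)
Your proof is correct and follows essentially the same route as the paper: both extract the common $(k-2)$-space $\pi$ and two traces $D_1',D_2'$ spanning $\alpha$ from Corollary~\ref{lemmagevolg}$(b)$, show that every element of $\mathcal{S}$ inside $\langle A,B\rangle$ either contains $\pi$ or meets $\alpha$ in a $(k-1)$-space, and conclude by maximality. The only cosmetic difference is that in the key case you argue by containment ($E\cap\alpha\subseteq D_1'\cap D_2'=\pi$) where the paper derives the same dichotomy via Grassmann's identity applied to $\langle E\cap D_1',E\cap D_2'\rangle$.
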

\begin{proof}
Recall that $\alpha = \langle \pi_{AB},P_{AC},P_{BC}\rangle$. By Corollary \ref{lemmagevolg}, we know that all the $k$-spaces $D\in \mathcal{S}'$ have a $(k-1)$-space $D'$ in common with $\alpha$ and they contain the $(k-2)$-space $\pi=\langle \pi_{ABC},P_{AC}P_{BC}\rangle$. So every pair of $k$-spaces in $\mathcal{S'}$ meets in a $(k-2)$-space inside $\langle A,B \rangle$. 
Consider a $k$-space $E$ of $\mathcal{S}$ in $\langle A,B \rangle$, not having a $(k-1)$-space in common with $\alpha$, and let $D_1$ and $D_2$ be $k$-spaces of $\mathcal{S}'$ with $D'_1 \cap D'_2=\pi$, and so $\langle D_1',D_2'\rangle = \alpha$. If $E $ does not contain $\pi$, then
\begin{equation*}
    \dim(E \cap \alpha) \geq \dim\langle E \cap D'_1 , E \cap D'_2\rangle\geq k-2+k-2-\dim(E \cap \pi) \geq k-1.
\end{equation*}
This is a contradiction. Hence, every $k$-space of $\mathcal{S}\setminus \mathcal{S}'$ contains $\pi$ or has a $(k-1)$-space in common with $\alpha$. From the maximality of $\mathcal{S}$, it follows that $\mathcal{S}$ is Example \ref{voorbeeldenalgemeen}$(iv)$ with $\rho=\langle A,B \rangle$ and $\pi=\langle \pi_{ABC},P_{AC}P_{BC}\rangle$.
\end{proof}


\subsection{$\alpha$ is a $(k+1)$-space}
To understand the structure of these sets of $k$-spaces, we will first investigate the case $ k = 3 $ and then we will generalize our results to $k\geq 3$.

\subsubsection{$k=3$ and $\alpha$ is a $4$-space}\label{alpha_4}

Note that for $k=3$, the spaces $\pi_{AB}, \pi_{BC}$ and $\pi_{AC}$ are pairwise disjoint lines and $\pi_{ABC}$ is the empty space. By  Corollary \ref{lemmagevolg}, we can suppose w.l.o.g. that $\alpha = \langle P_{AB},\pi_{AC},\pi_{BC}\rangle$, with $P_{AB}$ a point in $\pi_{AB}\setminus \pi_{ABC}$. Hence, all planes $D'=D\cap \langle A,B \rangle$, $D\in \mathcal{S}'$, contain $P_{AB}$ and span the $4$-space $\alpha$.

From now on, let $\mathcal{L}$ be the set of lines $D\cap C$, $D \in \mathcal{S}'$.

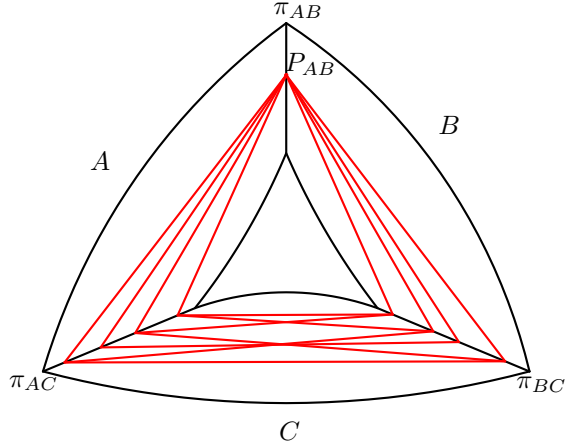
\begin{figure}[h] 
\centering
\definecolor{ffqqqq}{rgb}{1,0,0}
\begin{tikzpicture}[line cap=round,line join=round,>=triangle 45,x=1cm,y=1.05cm, scale=0.4]
\clip(-14.8148293340618,-4.4090286207433795) rectangle (12.907239857010698,10);
\draw [line width=0.8pt] (-3,0)-- (-8,-2);
\draw [line width=0.8pt] (3,0)-- (8,-2);
\draw [shift={(0,-8.75)},line width=0.8pt]  plot[domain=1.240498971965643:1.9010936816241504,variable=\t]({1*9.25*cos(\t r)+0*9.25*sin(\t r)},{0*9.25*cos(\t r)+1*9.25*sin(\t r)});
\draw [line width=0.8pt] (0,9)-- (0,4.89);
\draw [shift={(12.184695481335961,-8.270687622789788)},line width=0.8pt]  plot[domain=2.185212748834585:2.8403799855141356,variable=\t]({1*21.136306558546917*cos(\t r)+0*21.136306558546917*sin(\t r)},{0*21.136306558546917*cos(\t r)+1*21.136306558546917*sin(\t r)});
\draw [shift={(-19.076615999999994,13.2282)},line width=0.8pt]  plot[domain=5.594682347837613:5.871118964360154,variable=\t]({1*20.819290507878883*cos(\t r)+0*20.819290507878883*sin(\t r)},{0*20.819290507878883*cos(\t r)+1*20.819290507878883*sin(\t r)});
\draw [shift={(20.656043478260884,14.197173913043487)},line width=0.8pt]  plot[domain=3.564927750876754:3.818826858464239,variable=\t]({1*22.656028302053326*cos(\t r)+0*22.656028302053326*sin(\t r)},{0*22.656028302053326*cos(\t r)+1*22.656028302053326*sin(\t r)});
\draw [shift={(-9.667822445561141,-6.440234505862649)},line width=0.8pt]  plot[domain=0.2462183601117205:1.0113742127191454,variable=\t]({1*18.217234489211787*cos(\t r)+0*18.217234489211787*sin(\t r)},{0*18.217234489211787*cos(\t r)+1*18.217234489211787*sin(\t r)});
\draw [shift={(0,29.5)},line width=0.8pt]  plot[domain=4.463678991291166:4.961098969478212,variable=\t]({1*32.5*cos(\t r)+0*32.5*sin(\t r)},{0*32.5*cos(\t r)+1*32.5*sin(\t r)});
\draw [line width=0.8pt,color=ffqqqq] (0,7.358138923355905)-- (7.198373347153097,-1.6793493388612386);
\draw [line width=0.8pt,color=ffqqqq] (0,7.358138923355905)-- (-3.572078647048569,-0.2288314588194275);
\draw [line width=0.8pt,color=ffqqqq] (0,7.358138923355905)-- (-4.964881291150941,-0.7859525164603763);
\draw [line width=0.8pt,color=ffqqqq] (0,7.358138923355905)-- (-7.290586243828808,-1.7162344975315231);
\draw [line width=0.8pt,color=ffqqqq] (-7.290586243828808,-1.7162344975315231)-- (7.198373347153097,-1.6793493388612386);
\draw [line width=0.8pt,color=ffqqqq] (-4.964881291150941,-0.7859525164603763)-- (7.198373347153097,-1.6793493388612386);
\draw [line width=0.8pt,color=ffqqqq] (0,7.358138923355905)-- (4.834860911551517,-0.7339443646206067);
\draw [line width=0.8pt,color=ffqqqq] (4.834860911551517,-0.7339443646206067)-- (-7.290586243828808,-1.7162344975315231);
\draw [line width=0.8pt,color=ffqqqq] (0,7.358138923355905)-- (3.501864286544214,-0.2007457146176856);
\draw [line width=0.8pt,color=ffqqqq] (-3.572078647048569,-0.2288314588194275)-- (3.501864286544214,-0.2007457146176856);
\draw [line width=0.8pt,color=ffqqqq] (-6.102663732812033,-1.2410654931248133)-- (0,7.358138923355905);
\draw [line width=0.8pt,color=ffqqqq] (0,7.358138923355905)-- (5.679344476619753,-1.071737790647901);
\draw [line width=0.8pt,color=ffqqqq] (-4.964881291150941,-0.7859525164603763)-- (3.501864286544214,-0.2007457146176856);
\draw [line width=0.8pt,color=ffqqqq] (-3.572078647048569,-0.2288314588194275)-- (4.834860911551517,-0.7339443646206067);
\draw [line width=0.8pt,color=ffqqqq] (-6.102663732812033,-1.2410654931248133)-- (5.679344476619753,-1.071737790647901);
\begin{scriptsize}
	\draw[color=black] (-8.3,-2.3579324510360013) node {\normalsize{$\pi_{AC}$}};
	\draw[color=black] (8.4,-2.4) node {\normalsize{$\pi_{BC}$}};
	\draw[color=black] (0.40,9.4) node {\normalsize{$\pi_{AB}$}};
\draw[color=black] (-6.1164825675627545,4.651638940254569) node {\normalsize{$A$}};
\draw[color=black] (5.366586724901813,5.778043557499055) node {\normalsize{$B$}};
\draw[color=black] (0.1100318444275692,-3.890262740516124) node {\normalsize{$C$}};
\draw [color=ffqqqq, fill=ffqqqq] (0,7.358138923355905) circle (1.5pt);
\draw[color=black] (0.8,7.7) node {\normalsize{$P_{AB}$}};
\end{scriptsize}
\end{tikzpicture}

\caption{There are three solids $A,B,C$ in $\mathcal{S}$, with $A\cap B\cap C=\emptyset$ and $\dim(\alpha) = 4$} \label{alpha4figure}\end{figure}

\begin{prop}\label{lemma4}\emph{[Using Notation \ref{notk}]}
If $\mathcal{S}$ contains three solids such that there is no point contained in the three of them,  and if $\dim(\alpha)=4$, then a solid of $\mathcal{S}$ in $\langle A,B \rangle$ either
\begin{itemize}
\item [i)] is contained in $\alpha$, or
\item [ii)]contains $P_{AB}$ and a line $r$ of $C$, intersecting all lines of $\mathcal{L}$.
\end{itemize}
\end{prop}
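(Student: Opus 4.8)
The plan is to exploit that $\langle A,B\rangle$ is a $5$-space having $\alpha$ as a hyperplane, and that $C$ actually sits inside $\alpha$. First I would record the relevant geometry: since $\pi_{AC}=A\cap C$ and $\pi_{BC}=B\cap C$ are disjoint lines of the solid $C$ (their intersection lies in $\pi_{ABC}=\emptyset$), they span $C$; as both lie in $\alpha=\langle P_{AB},\pi_{AC},\pi_{BC}\rangle$, we get $C\subset\alpha$ and $\alpha=\langle P_{AB},C\rangle$ with $P_{AB}\notin C$. For every $D\in\mathcal{S}'$ the plane $D'=D\cap\langle A,B\rangle$ contains $P_{AB}$ and meets $C$ in the line $\ell_D:=D\cap C\in\mathcal{L}$, so $D'=\langle P_{AB},\ell_D\rangle$; since the $D'$ span $\alpha$ by definition of $\alpha$ and $P_{AB}\notin C$, the lines $\ell_D$ span $C$. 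Finally, for a solid $E\subset\langle A,B\rangle$ with $E_\alpha:=E\cap\alpha$ one has $E\cap D=E\cap D'=E_\alpha\cap D'$ and $E\cap C=E_\alpha\cap C$, because $D',C\subset\alpha$.

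Next I would split on $\dim(E\cap\alpha)$. As $\alpha$ is a hyperplane of the $5$-space $\langle A,B\rangle$, either $E\subset\alpha$, giving case $(i)$, or $E_\alpha$ is a plane; assume the latter. The first and main step is to show $P_{AB}\in E$. Suppose not. Since $E$ and $D$ meet in at least a line, $\ell_D':=E_\alpha\cap D'$ is a line for each $D\in\mathcal{S}'$; as $P_{AB}\notin E_\alpha\supseteq\ell_D'$, the space $\langle P_{AB},\ell_D'\rangle$ is a plane contained in $D'$ (because $P_{AB},\ell_D'\subset D'$), whence $\langle P_{AB},\ell_D'\rangle=D'$. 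Consequently $\ell_D\subset D'=\langle P_{AB},\ell_D'\rangle\subseteq\langle P_{AB},E_\alpha\rangle$, so every $\ell_D$ lies in the fixed plane $\langle P_{AB},E_\alpha\rangle\cap C$. This contradicts the fact that the lines $\ell_D$ span the solid $C$, so $P_{AB}\in E$.

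It remains to treat $P_{AB}\in E_\alpha$. Then $E_\alpha$ is a plane through $P_{AB}\notin C$, so $r:=E\cap C=E_\alpha\cap C$ is a genuine line of $C$ and $E_\alpha=\langle P_{AB},r\rangle$. For $D\in\mathcal{S}'$ both planes $E_\alpha,D'$ pass through $P_{AB}$ and $\langle E_\alpha,D'\rangle=\langle P_{AB},r,\ell_D\rangle$; since $P_{AB}\notin C\supseteq\langle r,\ell_D\rangle$, we have $\dim\langle E_\alpha,D'\rangle=1+\dim\langle r,\ell_D\rangle$. Hence $\dim(E_\alpha\cap D')\geq 1$ — that is, $E$ meets $D$ in at least a line — holds exactly when $\dim\langle r,\ell_D\rangle\leq 2$, i.e. exactly when $r$ and $\ell_D$ meet. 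Because $E\in\mathcal{S}$ must meet every element of $\mathcal{S}'$ in at least a $(k-2)$-space, the line $r=E\cap C$ meets every line of $\mathcal{L}$, which is precisely case $(ii)$.

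The technical heart is the dichotomy on whether $P_{AB}\in E$: the delicate point is the projection-type argument from $P_{AB}$ showing that, if $P_{AB}\notin E$, all the lines $\ell_D$ are forced into one plane of $C$, contradicting that they span $C$. Once $P_{AB}\in E$ is secured, the equivalence ``$E$ meets $D$ in a line $\iff$ $r$ meets $\ell_D$'' is routine Grassmann bookkeeping inside the $4$-space $\alpha$, using that every plane in play passes through $P_{AB}$ and that $C$ is a hyperplane of $\alpha$ avoiding $P_{AB}$.
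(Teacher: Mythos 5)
Your proof is correct. The overall skeleton matches the paper's: split according to whether $E\subset\alpha$, and in the remaining case show first that $P_{AB}\in E$ and then that $r=E\cap C$ must meet every line of $\mathcal{L}$. The genuine difference is in how the dichotomy is forced. The paper first invokes Theorem \ref{basisEKRthm} to produce two intersection planes $D_1',D_2'$ meeting exactly in the point $P_{AB}$; a solid $E$ avoiding $P_{AB}$ then contains disjoint lines $l_i\subset D_i'$ and hence equals $\langle l_1,l_2\rangle\subset\alpha$. You instead argue by projection from $P_{AB}$: if $P_{AB}\notin E$ and $E\not\subset\alpha$, then each $D'$ equals $\langle P_{AB},E\cap D'\rangle\subseteq\langle P_{AB},E\cap\alpha\rangle$, so every line of $\mathcal{L}$ is trapped in the plane $\langle P_{AB},E\cap\alpha\rangle\cap C$, contradicting that these lines span $C$ (which you correctly deduce from $\dim\alpha=4$ and $D'=\langle P_{AB},D\cap C\rangle$). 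Your route is self-contained: it avoids both the appeal to the classification of families pairwise meeting in codimension one and the paper's unargued side claim that $P_{AB}$ is the only point common to all the planes $D'$. The price is a slightly longer final step, where the paper simply observes that the two lines $E\cap D'$ and $r$ of the plane $E\cap\alpha$ must meet, while you carry out the equivalent Grassmann computation $\dim(E_\alpha\cap D')=3-\dim\langle r,\ell_D\rangle$; that computation does yield the sharper ``if and only if'' statement, though only one direction is needed here.
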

\begin{proof} Recall that all intersection planes $D\cap \langle A,B \rangle$ contain $P_{AB}$ and span the $(k+1)$-space $\alpha$. Hence, we can see that there exist solids $D_1,D_2 \in \mathcal{S}'$, such that their intersection planes $D'_1$ and $D'_2 $ with $\alpha$, meet exactly in the point $P_{AB}$. Indeed, by Theorem \ref{basisEKRthm}, if all the planes $D\cap \langle A,B \rangle $, $D\in \mathcal{S}'$, would pairwise intersect in a line, then these planes lie in a fixed solid or contain a fixed line. Neither possibility can occur since $\alpha$ is a $4$-space, and $P_{AB}$ is the only point contained in all intersection planes. 

 Suppose first that $E$ is a solid of $\mathcal{S}$ in $\langle A,B \rangle$, not containing $P_{AB}$. As $E$ needs to contain at least a line of every plane $D'=D\cap \langle A,B \rangle$, $D\in \mathcal{S}'$, $E$ contains at least
a line $l_1\subset D_1'\subset \alpha$ and a line $l_2\subset D_2'\subset \alpha$. Note that $l_1$ and $l_2$ are disjoint as they do not contain the point $P_{AB}$. Hence, $E=\langle l_1,l_2 \rangle \subset \alpha$. 


 So now we can suppose that $E$ contains the point $P_{AB}$
 and meets $\alpha$ in precisely the plane $\gamma$. The plane $\gamma$ is the span of $P_{AB}$ and the line $r=\gamma \cap C$. As $E\cap D$ is at least a line  of the plane $D'=D\cap \langle A,B \rangle$ for every $D\in \mathcal{S}'$, and since every two lines in the plane $\gamma$ meet each other, we have that $r$ has to intersect all the lines of $\mathcal{L}$. Hence, we find the second possibility.
\end{proof}\\

In the previous proposition, we proved that there are two types of solids of $\mathcal{S}$ contained in $\langle A,B \rangle$. One of them are the solids containing $P_{AB}$ and a line $r\subset C$, intersecting all lines of $\mathcal{L}$. The number of these solids depends on the number of lines $r$ meeting all lines of $\mathcal{L}$. \\

\noindent
\textbf{Case 1. There is a line $l\in \mathcal{L}$ that intersects all the lines of $\mathcal{L}$} 
\vspace{0.3cm}

Note that there cannot be two lines in $\mathcal{L}$ intersecting all the lines of $\mathcal{L}$, since then all lines of $\mathcal{L}$ would lie in a plane or go through a fixed point in $C$. This gives a contradiction as the lines of $\mathcal{L}$ span $C$ and at least two points of both $\pi_{AB}$ and $\pi_{BC}$ are covered by the lines of $\mathcal{L}$.\\

\begin{prop}\label{prop6k=3}
$\mathcal{S}$ is  Example \ref{voorbeeldenalgemeen}(vi) for $k=3$.
\end{prop}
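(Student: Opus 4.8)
The plan is to reconstruct, from the line $l$ and the pencil structure it induces, all the data $(\rho_1,\rho_2,\pi_A,\pi_B,P_{AB},\lambda_A,\lambda_B)$ defining Example \ref{voorbeeldenalgemeen}$(vi)$, and then to show that every element of $\mathcal{S}$ is one of the six types, after which maximality closes the argument. First I would set $a=l\cap\pi_{AC}$ and $a'=l\cap\pi_{BC}$; these are distinct, since $\pi_{AC}$ and $\pi_{BC}$ are disjoint lines in the solid $C$. Using the elementary fact that two transversals of the skew lines $\pi_{AC},\pi_{BC}$ meet if and only if they share their point on $\pi_{AC}$ or their point on $\pi_{BC}$, the hypothesis of Case 1 (that $l$ meets every line of $\mathcal{L}$) forces each $m\in\mathcal{L}$ to pass through $a$ or through $a'$. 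Thus $\mathcal{L}$ splits into two pencils lying in the planes $\pi_A:=\langle a,\pi_{BC}\rangle$ and $\pi_B:=\langle a',\pi_{AC}\rangle$ of $C$, with $\pi_A\cap\pi_B=l=:\lambda$. Since $\dim\alpha=4$ forces $\langle\mathcal{L}\rangle=C$ (otherwise $\alpha=\langle P_{AB},\langle\mathcal{L}\rangle\rangle$ would have dimension at most $3$), neither pencil is reduced to $l$ alone, so both genuinely occur among the spaces of $\mathcal{S}'$. I would record the dictionary $\rho_1=\langle A,B\rangle$, $\alpha=\langle P_{AB},C\rangle$ (so $C$ is of type $1$), $\lambda_A=a'$, $\lambda_B=a$, and note that $A=\langle\pi_{AB},\pi_{AC}\rangle$ and $B=\langle\pi_{AB},\pi_{BC}\rangle$ are then of type $4$ and type $3$ respectively.

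The key geometric step produces the second $(k+2)$-space $\rho_2$. Choose $D_1\in\mathcal{S}'$ with $m_1:=D_1\cap C$ a line through $a$, $m_1\neq l$, and $D_2\in\mathcal{S}'$ with $m_2:=D_2\cap C$ a line through $a'$, $m_2\neq l$; these exist by the above, and $m_1,m_2$ are disjoint, hence skew, so $\langle m_1,m_2\rangle=C$. Writing $D_i=\langle P_{AB},m_i,v_i\rangle$ with $v_i\notin\rho_1$, one computes
\begin{equation*}
\langle D_1,D_2\rangle=\langle C,P_{AB},v_1,v_2\rangle=\langle\alpha,v_1,v_2\rangle,
\end{equation*}
so that $\dim(D_1\cap D_2)=6-\dim\langle\alpha,v_1,v_2\rangle$. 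Since $D_1$ and $D_2$ meet in at least a line, this dimension is at least $5$, forcing $v_2\in\langle\alpha,v_1\rangle$; hence $D_1,D_2$ both lie in the $(k+2)$-space $\rho_2:=\langle\alpha,v_1\rangle$, for which $\rho_1\cap\rho_2=\alpha$. Fixing one $\pi_B$-pencil space and pairing it with every $\pi_A$-pencil space (and conversely) shows that all spaces of $\mathcal{S}'$ with $D\cap C\neq l$ lie in this single $\rho_2$; these are precisely the type-$5$ spaces (pencil through $a=\lambda_B$ in $\pi_A$) and type-$6$ spaces (pencil through $a'=\lambda_A$ in $\pi_B$). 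The remaining spaces of $\mathcal{S}'$ satisfy $D\cap C=l$, hence contain $\langle P_{AB},\lambda\rangle$ and are of type $2$ (or, if they happen to lie in $\rho_2$, fall under type $5$/$6$).

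It remains to place the spaces of $\mathcal{S}$ contained in $\rho_1$. By Proposition \ref{lemma4}, such a space either lies in $\alpha$, giving a type-$1$ space, or contains $P_{AB}$ together with a line $r\subset C$ meeting every line of $\mathcal{L}$. For the latter I would characterise $r$: meeting every line of the $a$-pencil forces $r\subset\pi_A$ or $a\in r$, and meeting every line of the $a'$-pencil forces $r\subset\pi_B$ or $a'\in r$; running through the four combinations shows that $r$ is a line of $\pi_A$ through $a'=\lambda_A$, or a line of $\pi_B$ through $a=\lambda_B$, or $r=l$, and in each case the solid is of type $1$, $3$ or $4$. This proves that $\mathcal{S}$ is contained in the family $\mathcal{T}$ of Example \ref{voorbeeldenalgemeen}$(vi)$ determined by the data above. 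Since $\mathcal{T}$ is a family of $k$-spaces pairwise intersecting in at least a $(k-2)$-space (indeed a maximal one, by the maximality lemma for Example \ref{voorbeeldenalgemeen}$(vi)$) and $\mathcal{S}$ is maximal, we conclude $\mathcal{S}=\mathcal{T}$.

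I expect the main obstacle to be the bookkeeping rather than any single hard estimate: the $\rho_2$-forcing computation is short and clean, but one must check carefully that the two pencils are matched consistently to the pairs $(\pi_A,\lambda_B)$ and $(\pi_B,\lambda_A)$, that $A,B,C$ themselves occupy the expected types, and that the boundary spaces with $D\cap C=l$ (and the degenerate line $r=l$) are absorbed into the types without double counting. Equally, verifying that the reconstructed configuration genuinely meets all the incidence requirements in the definition of Example \ref{voorbeeldenalgemeen}$(vi)$ — in particular $P_{AB}\notin\langle\pi_A,\pi_B\rangle=C$ and $\lambda_A\neq\lambda_B$ — is where most of the care is needed before the maximality argument can be invoked.
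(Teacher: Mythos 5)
Your proposal is correct and follows essentially the same route as the paper: split $\mathcal{L}$ into the two pencils through $l\cap\pi_{AC}$ and $l\cap\pi_{BC}$, use Proposition \ref{lemma4} to handle the solids inside $\langle A,B\rangle$, and show that the solids of $\mathcal{S}'$ whose trace on $C$ is not $l$ all lie in a single $5$-space $\rho_2$ through $\alpha$. Your dimension count $\dim\langle D_1,D_2\rangle=\dim\langle\alpha,v_1,v_2\rangle\le 5$ is just a reformulation of the paper's observation that two such solids meet in exactly a line and hence span a $5$-space through $\alpha$, and your explicit type-by-type matching only makes precise what the paper leaves implicit.
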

\begin{proof} Let $P_A=l \cap \pi_{AC}$, $P_{B}=l \cap \pi_{BC}$, $\pi_A=\langle \pi_{AC},l \rangle$ and $\pi_B=\langle \pi_{BC},l \rangle$. Since every line $m \neq l$ of $ \mathcal{L}$ intersects the lines $\pi_{AC}, \pi_{BC}$ and $l$, it follows that $m$ contains the point $P_A$ and is contained in $\pi_B$, or $m$ contains the point $P_B$ and is contained in $\pi_A$. Note that since $\dim(\alpha)=4$, there is at least one line $m_1\neq l$ in $\mathcal{L}$ through $P_A$ and there is at least one line $m_2\neq l$ in $\mathcal{L}$ through $P_B$.   As a consequence of  Proposition $\ref{lemma4}$, we have that a solid of $\mathcal{S}$ in $\langle A,B \rangle$, not contained in $\alpha$, contains $P_{AB}$ and it meets $C$ in a line $r$ that meets all lines of $\mathcal{L}$. Hence, $r$ is a line of the plane $\pi_A$ through $P_A$ or  in a line of $\pi_B$ through $P_B$. \\
Consider now the set $\mathcal{F}$ of solids of $\mathcal{S}'$, not through $\langle P_{AB},l\rangle$. We will prove that these solids lie in a $5$-space  that meets $\langle A,B \rangle$ in $\alpha$. Let $E_A,E_B\in \mathcal{F}$ be two solids through $m_1 \ni P_A$ and $m_2\ni P_B$ respectively. Since the planes $E_A\cap \alpha$ and $E_B\cap \alpha$ meet in precisely the point $P_{AB}$, the solids $E_A$ and $E_B$ have precisely a line in common, and so, they span a $5$-space $\rho_2$ through $\alpha$. Then every other solid $F\in \mathcal{F}$ is contained in $\rho_2$ as it meets $E_A\cap \alpha$, or $E_B\cap \alpha$, precisely in one point, namely $P_{AB}$, and so it must contain at least a point of $E_A$, or $E_B$ respectively, in $\rho_2\setminus \alpha$. This point, together with the plane $F\cap \alpha$, spans $F$ and so $F\subset \rho_2$.
Hence, $\mathcal{S}$ is Example \ref{voorbeeldenalgemeen}$(vi)$, with $\rho_1=\langle A,B \rangle$, $\lambda_A=P_A, \lambda_B=P_B$ and $\lambda=l$.
\end{proof}\\

\noindent \textbf{Case 2. For every line in $\mathcal{L}$, there exists another line in $\mathcal{L}$ disjoint to the given line} 
\vspace{0.3cm}

Depending on the structure of the set of lines $\mathcal{L}$,  we discuss the set of the solids of $\mathcal{S}$ in $\langle A,B \rangle$ not contained in $\alpha$. We have different possibilities for the line $r$ of $C$, meeting all lines of $\mathcal{L}$ (see Proposition $\ref{lemma4}$):

\begin{itemize}
    \item [$i)$] \textnormal{Suppose there are three pairwise disjoint lines in $\mathcal{L}$, then these three lines belong to a unique regulus $\mathcal{R}$.}
    \begin{itemize}
        \item [$a)$] \textnormal{If $\mathcal{L}$ is contained in $\mathcal{R}$, then $|\mathcal{L}|\leq q+1$ and $r$ is a line of the opposite regulus $\mathcal{R}^c$. Hence, there are $q+1$ possibilities for $r$.}
        \item [$b)$] \textnormal{If $\mathcal{L}$ is not
contained in $\mathcal{R}$, then there are exactly two lines, intersecting all the lines of $\mathcal{L}$, namely $\pi_{AC}$ and $\pi_{BC}$. Let $l\in\mathcal{L}\setminus \mathcal{R}$. If there were a third line $r$ meeting all lines of $\mathcal{L}$, then $r\in \mathcal{R}^c$. But then there would be three lines, namely $r$, $\pi_{AC}$ and $\pi_{BC}$,  in $\mathcal{R}^c$, all of them intersecting $l$. Hence, $l$ also has to lie in $\mathcal{R}$, a contradiction.
        In this case there are at most $2$ possibilities for $r$ and $|\mathcal{L}|\leq (q+1)^2$. \\
Note that in this case, $\mathcal{L}$ is not contained in any regulus. This follows since the three pairwise disjoint lines in $\mathcal{L}$ define a unique regulus. }
    \end{itemize}
    \item [$ii)$]\textnormal{Suppose there are no three pairwise disjoint lines in $\mathcal{L}$.}
    In this case, we can prove the following lemma.

\begin{lemma}\label{lemmaopsplitsing}
The set $\mathcal{L}$ is contained in the union of two point-pencils such that their vertices are contained either in $\pi_{AC}$ or in $\pi_{BC}$. 
\end{lemma}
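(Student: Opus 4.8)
The plan is to translate the geometric statement into a purely combinatorial one about transversals of two skew lines, and then recognise the hypotheses of subcase $ii)$ as precisely the condition that makes a König-type covering argument work. First I would record the structure of $\mathcal{L}$: by Lemma \ref{lemmahoespaceserbuiten} each $D'=D\cap\langle A,B\rangle$ meets $\pi_{AC}$ and $\pi_{BC}$ in a point, and since $\pi_{AC}$ and $\pi_{BC}$ are disjoint lines spanning the solid $C=\langle \pi_{AC},\pi_{BC}\rangle$, each line $l=D\cap C$ of $\mathcal{L}$ is the unique transversal $\langle a,b\rangle$ determined by its trace $a=l\cap\pi_{AC}$ on $\pi_{AC}$ and $b=l\cap\pi_{BC}$ on $\pi_{BC}$. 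Thus $\mathcal{L}$ embeds into $\pi_{AC}\times\pi_{BC}$. The key incidence fact to isolate here is that two transversals $\langle a,b\rangle$ and $\langle a',b'\rangle$ meet if and only if $a=a'$ or $b=b'$: if both coordinates differ then $\langle a,a'\rangle=\pi_{AC}$ and $\langle b,b'\rangle=\pi_{BC}$, so the two lines span $C$ and are therefore skew. A point-pencil whose vertex lies on $\pi_{AC}$ (respectively $\pi_{BC}$) is exactly the set of transversals through a fixed point of $\pi_{AC}$ (respectively $\pi_{BC}$), so the conclusion to be reached is that all of $\mathcal{L}$ passes through one of two fixed points of $\pi_{AC}\cup\pi_{BC}$.

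With this dictionary in hand, I would form the bipartite graph $G$ on the vertex set $\pi_{AC}\sqcup\pi_{BC}$, joining $a$ to $b$ whenever $\langle a,b\rangle\in\mathcal{L}$. By the incidence fact above, a set of pairwise disjoint lines of $\mathcal{L}$ corresponds to a matching of $G$, and a point-pencil with vertex in $\pi_{AC}\cup\pi_{BC}$ corresponds to the star at a single vertex of $G$. The hypothesis of subcase $ii)$, that there are no three pairwise disjoint lines, says exactly that the maximum matching of $G$ has size at most $2$, while Case $2$ guarantees a matching of size $2$ exists. König's theorem then yields a vertex cover of size (exactly) two, i.e.\ two points of $\pi_{AC}\cup\pi_{BC}$ such that every line of $\mathcal{L}$ runs through one of them; these are the vertices of the two desired point-pencils, which proves the statement.

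If one prefers to avoid invoking König's theorem and argue directly, the route is to fix disjoint lines $l_1=\langle a_1,b_1\rangle$ and $l_2=\langle a_2,b_2\rangle$ in $\mathcal{L}$ (which exist by Case $2$); every further line must meet $l_1$ or $l_2$, hence lie in row $a_1$ or $a_2$ or in column $b_1$ or $b_2$. The main obstacle, and the only genuinely delicate part, is ruling out a ``cross-shaped'' configuration that spreads over all four of these rows and columns. This is where the no-three-skew hypothesis is used twice: a line in row $a_1$ off the two distinguished columns together with a line in column $b_1$ off the two distinguished rows would, together with $l_2$, form a pairwise skew triple, and symmetrically for the opposite corner with $l_1$; a similar triple excludes the stray corner $\langle a_2,b_1\rangle$. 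Carrying out this short case analysis forces $\mathcal{L}$ into two rows, into two columns, or into one row and one column, which is again the assertion that $\mathcal{L}$ lies in the union of two point-pencils with vertices on $\pi_{AC}$ or $\pi_{BC}$. I expect the bookkeeping of these corner cases to be the only point requiring care, the underlying reason being nothing more than König's theorem specialised to matching number two.
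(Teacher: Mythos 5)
Your proposal is correct, and your primary route is a genuinely cleaner packaging of what the paper does by hand. The dictionary you set up is exactly right: by Lemma \ref{lemmahoespaceserbuiten} each line of $\mathcal{L}$ is a transversal of the skew lines $\pi_{AC}$ and $\pi_{BC}$, two such transversals meet if and only if they share a trace point (otherwise they span the solid $C$), so $\mathcal{L}$ is faithfully a set of edges of a bipartite graph on $\pi_{AC}\sqcup\pi_{BC}$, pairwise disjoint lines are matchings, and a point-pencil with vertex on $\pi_{AC}\cup\pi_{BC}$ is a star. The hypothesis of subcase $ii)$ is matching number at most $2$, and K\"onig's theorem immediately produces the two-vertex cover, i.e.\ the two point-pencils. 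The paper instead argues directly: it fixes two disjoint lines $l_1,l_2$, observes that every line of $\mathcal{L}$ must pass through one of the four points $P_1,P_2,Q_1,Q_2$, and then rules out the spread-over-four-pencils configuration by exhibiting, in two steps, a triple of pairwise disjoint lines --- which is precisely the corner-case bookkeeping you sketch in your fallback argument (and your verification that the relevant triples are pairwise skew is correct). In effect the paper reproves K\"onig for matching number two in this special setting; your version buys conceptual clarity and removes the ad hoc case split at the cost of invoking an external theorem, while the paper's version is self-contained. Either is acceptable here; both are correct.
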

\proof
\textcolor{black}{We can suppose that $\mathcal{L}$ contains at least two disjoint lines $l_1$, $l_2$, since the lines of $\mathcal{L}$ span
the solid $C$. Let $P_i=\pi_{AC} \cap l_i$ and $Q_i=\pi_{BC}\cap l_i$, for $i=1,2$. As there are no three pairwise disjoint lines in $\mathcal{L}$ we see that every line $l\in \mathcal{L}$ contains at least one of the points $P_i$ and $Q_i$, with $i=1,2$, and so $\mathcal{L}$ is contained in the union of $4$ point-pencils with vertices $P_1,P_2,Q_1,Q_2$. If $|\mathcal{L}|\leq 4$, then it is easy to see that $\mathcal{L}$ is contained in the union of two point-pencils.
 Suppose now that $|\mathcal{L}|\geq 5$ and
 that $\mathcal{L}$ is not contained in the union of two of these point-pencils. 
 Due to the symmetry, we can suppose w.l.o.g. that $\mathcal{L}$ contains a line $l_3\neq l_1, P_1 Q_2$ that contains $P_1$, a line $l_4\neq l_2, P_1 Q_2$ that contains $Q_2$ and a line $l_5\neq l_2, P_2 Q_1$ that contains $P_2$. Let $Q_3=l_3\cap \pi_{BC}$ and $P_4=l_4\cap \pi_{AC}$. Then $l_5$ contains the point $Q_3$ as otherwise $l_3,l_4$ and $l_5$ would be pairwise disjoint. So $l_5=P_2 Q_3$, but then we see that $l_1,l_4$ and $l_5$ are three pairwise disjoint lines, a contradiction. Hence, $\mathcal{L}$ is contained in the union of two point-pencils. }\endproof
    
By using the notations in the lemma above, since $\mathcal{L}$ contains no $3$ pairwise disjoint lines, we know that if $|\mathcal{L}|\geq 3$, then every line $l_0 \in \mathcal{L} \setminus \{l_1,l_2\}$ contains at least one of the points $P_1,P_2,Q_1,Q_2$. From Lemma \ref{lemmaopsplitsing}, we find the following possibilities for the set $\mathcal{L}$.
    \begin{itemize}
        \item [$a)$] \textnormal{If $\mathcal{L}$ only contains two lines $l_1,l_2$, then $l_1$ and $l_2$ are disjoint and we find $(q+1)^2$ possibilities for $r$, as every such line is defined by a point of $l_1$ and a point of $l_2$.}
        \item [$b)$] \textnormal{If $\mathcal{L}$ contains at least $3$ elements and is contained in the union of a line $l_0$ and a point-pencil through a point $P$, then $|\mathcal{L}|\leq q+2$. Let $P_0=l_0\cap \pi_{AC}$, $Q_0=l_0\cap \pi_{BC}$ and suppose w.l.o.g. that $P \in \pi_{AC}$.  A line $r$ that meets all lines of $\mathcal{L}$ is a line that contains $P$ and a point of $l_0$ or is a line that contains $Q_0$ and lies in the plane $\langle P, \pi_{BC} \rangle$. Hence, there are at most $2q+1$ possibilities for the line $r$.}
        \item [$c)$] \textnormal{If $\mathcal{L}$ contains at least $4$ elements and is contained in the union of two point-pencils through the points $P$ and $Q$ respectively such that $\mathcal{L}$ contains at least two lines through $P$ and at least two lines through $Q$, then $|\mathcal{L}|\leq 2q$. {Note that $|\mathcal{L}|< 2q+1$, as the line $PQ$ is not contained in $\mathcal{L}$. This follows  since this line meets all other lines of $\mathcal{L}$, and so, this situation is discussed in Case 1}. Therefore, a line $r$ that meets all lines of $\mathcal{L}$ is the line $\pi_{AC}$, the line $\pi_{BC}$ or the line $PQ$ if $PQ \neq \pi_{AC}$, $\pi_{BC}$. Hence, there are at most $3$ possibilities for the line $r$.}

    \end{itemize}
\end{itemize}

For every intersection
plane $D'$ in $\alpha$, there are at most $\qbin{3}{1}-\qbin{2}{1}=q^2$ ways to extend the plane to a solid $D\in \mathcal{S}'$, as this solid also has to meet several solids of $\mathcal{S}'$ in a point $Q \notin \langle A,B \rangle$. And since the number of planes $D'$ equals the number of lines in $\mathcal{L}$, there are at most $(q+1)\cdot q^2,(q+1)^2\cdot q^2, 2 \cdot q^2,  (q+2)\cdot q^2,2(q+1)\cdot q^2$ solids outside of $\langle A,B \rangle$, respectively, dependent on the five cases $ia), ib), iia), iib), iic)$ above. \\
For the solids inside $\langle A,B \rangle$, there are $\qbin{5}{1}$ solids in $\alpha$ and $(q+1) \cdot q^2,2 \cdot q^2,(q+1)^2 \cdot q^2, (2q+1) \cdot q^2, 3 \cdot q^2$ solids of the second type of Proposition \ref{lemma4}, respectively. We find these numbers by multiplying the number of possibilities for
the line $r$ and the number $q^2$ of $3$-spaces through a plane in $\langle A,B \rangle$, not contained in $\alpha$. So, in total, we have at most $\qbin{5}{1}+(q^2+2q+3 ) \cdot q^2 =O(2q^4)$ solids, using case $ib)$ or $iia)$.

\begin{remark}
Note that the number of elements of $\mathcal{S}$ in this case is smaller than $f(3,q)=3q^4+6q^3+5q^2+q+1$, and so we will not consider these maximal sets of solids in our classification result (Main Theorem \ref{overzicht2}).
\end{remark}

\subsubsection{General case $k>3$ and $\alpha$ is a $(k+1)$-space}
By  Corollary \ref{lemmagevolg}, we can suppose w.l.o.g., that $\alpha$ is spanned by $\pi_{AC}, \pi_{BC}$ and a point $P_{AB}$
of $\pi_{AB}$ outside of $\pi_{ABC}$, and that all $(k-1)$-spaces $D'=D\cap \langle A,B \rangle, D\in \mathcal{S}'$, contain $\langle P_{AB}, \pi_{ABC} \rangle$.
\begin{prop}\label{prop6kmeerdan3}
\emph{[Using Notation \ref{notk}]} If $\mathcal{S}$ contains three $k$-spaces that meet in a $(k-4)$-space and $\dim(\alpha)=k+1$, then a $k$-space of $\mathcal{S}$ in $\langle A,B \rangle$ is contained in $\alpha$ or  contains $\pi_{ABC}$. More specifically, if $|\mathcal{S}|> f(k,q)$, then $\mathcal{S}$ is Example \ref{voorbeeldenalgemeen}$(vi)$.
\end{prop}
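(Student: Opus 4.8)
The plan is to reduce the general case $k>3$ to the already-settled case $k=3$ by passing to the quotient geometry $\PG(n,q)/\pi_{ABC}$, after first isolating the two statements hidden in the proposition: $(1)$ every $E\in\mathcal{S}$ with $E\subseteq\langle A,B\rangle$ and $E\not\subseteq\alpha$ contains $\pi_{ABC}$ (in fact it contains $\rho_1:=\langle P_{AB},\pi_{ABC}\rangle$), and $(2)$ once $(1)$ is known, the whole configuration of elements through $\pi_{ABC}$ projects onto the $k=3$ picture of Subsection \ref{alpha_4}.

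First I would prove $(1)$ directly, imitating the first part of Proposition \ref{lemma4}. By the construction in the proof of Corollary \ref{lemmagevolg}$(c)$ there exist $D_1,D_2\in\mathcal{S}'$ with $D_1'\cap D_2'=\rho_1$ and $\langle D_1',D_2'\rangle=\alpha$. Let $E\subseteq\langle A,B\rangle$ with $E\not\subseteq\alpha$; since $E$ is a $k$-space and $\alpha$ a $(k+1)$-space in the $(k+2)$-space $\langle A,B\rangle$, the space $U:=E\cap\alpha$ has dimension $k-1$. As $C\subseteq\langle A,B\rangle$ forces $E\cap D_i=E\cap D_i'$, the intersection hypothesis gives $\dim(U\cap D_i')\ge k-2$ for $i=1,2$. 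Writing $W_i=U\cap D_i'$, Grassmann's identity inside $U$ yields $\dim(W_1\cap W_2)\ge(k-2)+(k-2)-(k-1)=k-3$, while $W_1\cap W_2\subseteq D_1'\cap D_2'=\rho_1$ forces $\dim(W_1\cap W_2)\le k-3$. Hence $W_1\cap W_2=\rho_1\subseteq E$, which proves $(1)$ and also the claim in the statement, since $\pi_{ABC}\subseteq\rho_1$.

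With $(1)$ in hand, every element of $\mathcal{S}$ either lies in $\alpha$ or contains $\pi_{ABC}$ (the elements of $\mathcal{S}'$ contain $\pi_{ABC}$ by Lemma \ref{lemmahoespaceserbuiten}). I would then quotient by the $(k-4)$-space $\pi_{ABC}$: the images $\bar A,\bar B,\bar C$ are three solids of $\PG(n,q)/\pi_{ABC}$ meeting in the empty space, $\bar\alpha$ is a $4$-space, the elements of $\mathcal{S}$ through $\pi_{ABC}$ correspond bijectively to solids, and the spaces $\bar D\cap\bar C$, $D\in\mathcal{S}'$, form a set $\overline{\mathcal L}$ of lines of $\bar C$. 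This is exactly the configuration of Subsection \ref{alpha_4}, so its dichotomy applies to $\overline{\mathcal L}$. In Case 1 (some line of $\overline{\mathcal L}$ meets all lines of $\overline{\mathcal L}$), the argument of Proposition \ref{prop6k=3}, carried out in the quotient, produces a second $(k+2)$-space $\rho_2\supseteq\alpha$ playing the role of the $5$-space there; the maximality of $\mathcal{S}$ then forces $\mathcal{S}$ to be exactly Example \ref{voorbeeldenalgemeen}$(vi)$ with $\rho_1=\langle A,B\rangle$ and $\rho_2$ as found. In Case 2, the $k=3$ count bounds the number of elements of $\mathcal{S}$ not lying in $\alpha$ by $(q^2+2q+3)q^2=q^4+2q^3+3q^2$; adding the at most $\theta_{k+1}=\qbin{k+2}{1}$ $k$-spaces of $\mathcal{S}$ inside $\alpha$ gives $|\mathcal{S}|\le\theta_{k+1}+q^4+2q^3+3q^2=f(k,q)$, so Case 2 is excluded by the hypothesis $|\mathcal{S}|>f(k,q)$.

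The main obstacle I anticipate is the bookkeeping in this reduction: the quotient is faithful only on the elements of $\mathcal{S}$ through $\pi_{ABC}$, so the $k$-spaces of $\mathcal{S}$ lying in $\alpha$ but not through $\pi_{ABC}$ must be counted separately, and one must check that replacing the term $\qbin{5}{1}$ (counting solids of $\bar\alpha$) by the full number $\theta_{k+1}$ of $k$-spaces in $\alpha$ makes the Case 2 total land exactly on $f(k,q)$ rather than on the larger $k=3$ bound $3q^4+\cdots$. A secondary point to verify is that the structural parts of Propositions \ref{lemma4} and \ref{prop6k=3} use only the pairwise intersection property, the span of $\alpha$, and Theorem \ref{basisEKRthm}, so they remain valid for the projected set in the quotient, global maximality entering only at the very end through the original $\mathcal{S}$.
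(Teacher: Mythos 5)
Your proposal is correct and follows essentially the same route as the paper: establish that every element of $\mathcal{S}$ in $\langle A,B\rangle$ either lies in $\alpha$ or contains $\pi_{ABC}$, then pass to the quotient by $\pi_{ABC}$ and invoke the $k=3$ analysis of Subsection \ref{alpha_4}, with Case 1 yielding Example \ref{voorbeeldenalgemeen}$(vi)$ and Case 2 bounded by $f(k,q)$. The only cosmetic differences are that you prove the dichotomy in contrapositive form via two elements $D_1,D_2$ with $D_1'\cap D_2'=\rho_1$ (the paper instead forces a $k$-space not through $\pi_{ABC}$ into $\alpha$ via its traces on $\pi_{AB},\pi_{AC},\pi_{BC}$), and your Case 2 bookkeeping splits by membership in $\alpha$ rather than by containment of $\pi_{ABC}$; both land on the same bound $f(k,q)$.
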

\begin{proof}
We suppose that $E$ is a $k$-space of $\mathcal{S}$ in $\langle A,B \rangle$, not through $\pi_{ABC}$. As $E$ contains at least a $(k-2)$-space of all the $(k-1)$-spaces $D'$, with $D\in \mathcal{S}'$, we find that $E$ contains a hyperplane $\tau_0$ of $\pi_{ABC}$, a $(k-4)$-space $\tau_1$ of $\alpha \cap \pi_{AB}$,  a $(k-3)$-space $\tau_2$ of  $ \pi_{AC}$  and a $(k-3)$-space $\tau_3$ of $ \pi_{BC}$. As $\tau_1\cap \tau_2=\tau_1\cap \tau_3=\tau_2\cap \tau_3=\tau_0$, and by the Grassmann
dimension property, we see that $E \subset \alpha$.

For the $k$-spaces through $\pi_{ABC}$, we can investigate the solids $E/ \pi_{ABC}$, $ E\in \mathcal{S}$, 
in the quotient space $\PG(n,q) / \pi_{ABC}$, and use the results in Case $1$ and Case $2$ of Subsection \ref{alpha_4}. These results imply that a $k$-space in $\langle A,B \rangle$ through $\pi_{ABC}$ is contained in $\alpha$ or contains $\langle P_{AB}, \pi_{ABC} \rangle$ and a line in $C\setminus \pi_{ABC}$ that meets all the $(k-2)$-spaces $D\cap C, D\in \mathcal{S}'$. Then there are two cases:
\begin{itemize}
    \item [-] \textsc{Case 1.} If there is a line $l\in C/ \pi_{ABC}$ meeting the subspaces $D\cap C$ for all $D\in \mathcal{S}'$, then there are $\theta_{n-k}+q^4+5q^3+q^2$ $k$-spaces of $\mathcal{S}$ that contain $\pi_{ABC}$.
    \item [-] \textsc{Case 2.} If there is no line $l\in C/ \pi_{ABC}$ meeting the subspaces $D\cap C$ for all $D\in \mathcal{S}'$, then there are at most $2q^4+3q^3+4q^2+q+1$ $k$-spaces of $\mathcal{S}$ that contain $\pi_{ABC}$.
\end{itemize}
{It is clear that two elements of $\mathcal{S}$ in $\alpha$ meet in at least a $(k-1)$-space. From the investigation of the quotient space $\PG(n,q)/\pi_{ABC}$ there follows that two elements of $\mathcal{S}$ through $\pi_{ABC}$, not in $\alpha$, meet in at least a $(k-2)$-space.} A $k$-space $E_1$ of $\mathcal{S}$ in $\alpha$ and a $k$-space $E_2$ of $\mathcal{S}$ not in $\alpha$, but through $\pi_{ABC}$, will also meet in a $(k-2)$-space. This follows since $E_2$ contains the $(k-3)$-space $\langle P_{AB}, \pi_{ABC} \rangle \subset \alpha$ and a line in $C\setminus \pi_{ABC} \subset \alpha$. Hence, $E_2$ meets $\alpha$ in a $(k-1)$-space. Since $E_1$ is contained in $\alpha$, it follows that $E_1$ and $E_2$ meet in at least a $(k-2)$-space.

 Now, as every element of $\mathcal{S}$, not through $\pi_{ABC}$, is contained in $\alpha$, there are $\theta_{k+1}-\theta_4$ elements of $\mathcal{S}$ not through $\pi_{ABC}$. Hence, in \textsc{Case 1}, $\mathcal{S}$ is Example \ref{voorbeeldenalgemeen}$(vi)$ and $|\mathcal{S}|=\theta_{n-k}+\theta_{k+1}+4q^3-q-1$. In \textsc{Case 2}, $|\mathcal{S}|\leq \theta_{k+1}+q^4+2q^3+3q^2$, which proves the proposition.
 \end{proof}\\


\subsection{$\alpha$ is a $(k+2)$-space}\label{sectionbla}
Here again, we first consider the case $k=3$. 
\subsubsection{$\alpha$ is a 5-space}\label{sectie5} 
We start with a lemma that will often be used in this subsection.

\begin{lemma}\label{hulplemmatje}\emph{[Using Notation \ref{notk}]}
If $\mathcal{S}$ contains three solids $A,B,C$, with $A\cap B\cap C=\emptyset$, then every two intersection planes $D'_1$ and $D'_2$, with $D'_i=D_i\cap \langle A,B \rangle, D_i \in \mathcal{S}', i=1,2$, share a point on $\pi_{AB}$, $\pi_{AC}$ or $\pi_{BC}$.
\end{lemma}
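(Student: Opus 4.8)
The plan is to encode each member of $\mathcal{S}'$ by three ``marker'' points lying on the three lines $\pi_{AB},\pi_{AC},\pi_{BC}$, and then to force two such triples to agree in at least one coordinate. For $k=3$ these three lines are pairwise disjoint and span the $5$-space $\langle A,B\rangle$: indeed $\langle\pi_{AC},\pi_{BC}\rangle=C$, while $\pi_{AB}\cap C=A\cap B\cap C=\emptyset$. By Lemma \ref{lemmahoespaceserbuiten}, for each $D\in\mathcal{S}'$ the plane $D'=D\cap\langle A,B\rangle$ meets each of the three lines in exactly one point, which I denote $U_D\in\pi_{AB}$, $V_D\in\pi_{AC}$, $W_D\in\pi_{BC}$. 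A preliminary step is to check that no line is a transversal to all three of $\pi_{AB},\pi_{AC},\pi_{BC}$: such a transversal would meet $C$ in the two distinct points lying on $\pi_{AC}$ and $\pi_{BC}$, hence be contained in $C$, yet it would also meet $\pi_{AB}$, which is disjoint from $C$ -- a contradiction. Therefore $U_D,V_D,W_D$ are never collinear and $D'=\langle U_D,V_D,W_D\rangle$.

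I would then reduce the assertion to showing that $D_1'\cap D_2'\neq\emptyset$ for $D_1,D_2\in\mathcal{S}'$. The key is that $\langle A,B\rangle=\pi_{AB}\oplus\pi_{AC}\oplus\pi_{BC}$ as a direct sum of the three underlying $2$-dimensional vector spaces, so every vector of $\langle A,B\rangle$ splits uniquely into three components. Writing $U_i:=U_{D_i}$, and similarly for $V_i,W_i$, a common point of $D_1'=\langle U_1,V_1,W_1\rangle$ and $D_2'=\langle U_2,V_2,W_2\rangle$ is represented by a nonzero vector whose $\pi_{AB}$-component is simultaneously a multiple of a representative of $U_1$ and of a representative of $U_2$; if this component is nonzero it forces $U_1=U_2$, and then $U_1$ is itself a point of $D_1'\cap D_2'$ lying on $\pi_{AB}$, and symmetrically for the $\pi_{AC}$- and $\pi_{BC}$-components. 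Since the common vector is nonzero, at least one component is nonzero, so a nonempty intersection automatically produces a shared point on one of $\pi_{AB},\pi_{AC},\pi_{BC}$, which is exactly the claimed conclusion.

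It remains to prove $D_1'\cap D_2'\neq\emptyset$, and here the intersection hypothesis on $\mathcal{S}$ is decisive. If instead $D_1'$ and $D_2'$ were disjoint planes of the $5$-space $\langle A,B\rangle$, then $\dim\langle D_1',D_2'\rangle=2+2-(-1)=5$, so $\langle D_1',D_2'\rangle=\langle A,B\rangle$; hence $\langle D_1,D_2\rangle\supseteq\langle A,B\rangle$, and since $D_1\not\subseteq\langle A,B\rangle$ we get $\dim\langle D_1,D_2\rangle\geq 6$. Grassmann's identity then gives $\dim(D_1\cap D_2)\leq 3+3-6=0$, contradicting the fact that $D_1,D_2\in\mathcal{S}$ meet in at least a $(k-2)$-space, i.e.\ a line. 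Thus $D_1'\cap D_2'\neq\emptyset$, completing the argument. I expect the main obstacle to be the bookkeeping of the first two paragraphs -- establishing the no-transversal fact so that each $D'$ is exactly the triangle on its three markers, and then tracking the direct-sum components to pin the shared point onto one of the lines -- whereas the final dimension count, though decisive, is short.
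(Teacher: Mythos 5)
Your proof is correct. Both your argument and the paper's rest on the same two facts: first, that $D_1'\cap D_2'\neq\emptyset$ (you get this by noting disjoint planes would span $\langle A,B\rangle$ and then applying Grassmann to $D_1,D_2$; the paper gets it more directly from $\dim(D_1\cap D_2)\geq 1$), and second, that the three pairwise disjoint lines $\pi_{AB},\pi_{AC},\pi_{BC}$ together span the $5$-space $\langle A,B\rangle$. Where you diverge is in how that second fact is exploited. The paper argues contrapositively: if the two planes shared no marker point, the six intersection points would be pairwise distinct, two on each line, hence would span all of $\langle A,B\rangle$; then $\langle D_1',D_2'\rangle$ would be a $5$-space and the planes would be disjoint, a contradiction. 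You instead upgrade the spanning statement to the vector-space direct sum $\pi_{AB}\oplus\pi_{AC}\oplus\pi_{BC}$ and read off, from the unique decomposition of a nonzero common vector, that some pair of markers must coincide. Your route costs you one extra (easy) preliminary, namely that no line is a transversal to all three of $\pi_{AB},\pi_{AC},\pi_{BC}$, which you need so that $D'=\langle U_D,V_D,W_D\rangle$; the paper's spanning argument never needs this. In exchange your coordinate argument is constructive rather than by contradiction and makes transparent exactly which of the three lines carries the shared point. Both proofs are short and correct; the difference is essentially one of packaging.
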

\begin{proof}
Consider two solids $D_1$ and $D_2$ in $\mathcal{S}'$, with corresponding intersection planes $D'_1$ and $D'_2$ in $\langle A,B \rangle$. Since $D_1$ and $D_2$ meet in at least a line, $D'_1$ and $D'_2$ have to meet in at least a point. If $D'_1$ and $D'_2$ do not meet in a point of $\pi_{AB}$, $\pi_{AC}$ or $\pi_{BC}$, then these planes define $6$ different intersection points $P_1,\dots, P_6$ on the lines $\pi_{AB}$, $\pi_{AC}$ and $\pi_{BC}$. As $\langle D'_1,D'_2\rangle =\langle P_1,\dots, P_6\rangle=\langle \pi_{AB},\pi_{AC},\pi_{BC}\rangle$, we find that $D'_1$ and $D'_2$ span a $5$-space, so these planes are disjoint, a contradiction.\end{proof}\\

If $\alpha$ is a $5$-space,  we distinguish two cases, depending on the planes $D'=D\cap \langle A,B \rangle$, $D \in \mathcal{S}'$. 
\begin{lemma} \label{alpha5space}\emph{[Using Notation \ref{notk}]}
If $\mathcal{S}$ contains three solids $A,B,C$, with $A\cap B\cap C=\emptyset$, and if $\dim(\alpha)=5$, then we have one of the following possibilities for the planes $D'=D\cap \langle A,B \rangle, D\in \mathcal{S}'$:
\begin{itemize}
    \item [i)] There are four possibilities for the planes $D'$: $\langle P_1,P_3,P_6\rangle,\langle P_1,P_4,P_5\rangle,\langle P_2,P_4,P_6\rangle$ and 
    \\$\langle P_2,P_3,P_5\rangle$, where $P_1,P_2 \in \pi_{AB}, P_3,P_4 \in \pi_{BC}$ and $P_5,P_6 \in \pi_{AC}$.
    \item  [ii)]There are three points $P\in \pi_{AB},Q\in \pi_{BC}$ and $R\in \pi_{AC}$ so that every plane $D'$ contains at least two of the three points of $\{P,Q,R\}$.
\end{itemize}
\end{lemma}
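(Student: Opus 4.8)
The plan is to translate the geometric statement into a purely combinatorial one about triples and then run a short forcing argument. By Lemma \ref{lemmahoespaceserbuiten}, each plane $D'=D\cap\langle A,B\rangle$ with $D\in\mathcal{S}'$ meets each of the three pairwise disjoint lines $\pi_{AB},\pi_{BC},\pi_{AC}$ in exactly one point, so I would encode $D'$ by the triple $(x,y,z)$ of these intersection points, with $x\in\pi_{AB}$, $y\in\pi_{BC}$, $z\in\pi_{AC}$. Two planes $D'_1,D'_2$ meet in a point of $\pi_{AB}$, $\pi_{BC}$ or $\pi_{AC}$ precisely when the corresponding triples agree in the first, second or third coordinate; hence Lemma \ref{hulplemmatje} says exactly that any two of these triples agree in at least one coordinate. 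Writing $\mathcal{T}$ for the resulting family of triples, the task becomes: classify families of triples over three alphabets, pairwise agreeing in at least one coordinate, whose planes span the full $5$-space $\alpha$.

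First I would record what the spanning hypothesis $\dim(\alpha)=5$ gives. If all triples shared a common first coordinate, i.e. all $D'$ passed through a fixed point $P\in\pi_{AB}$, then every $D'$ would lie in $\langle P,\pi_{BC},\pi_{AC}\rangle$, which is a $4$-space, contradicting $\dim(\alpha)=5$; the same holds for the other two coordinates. Thus each of the three coordinate projections of $\mathcal{T}$ is non-constant, so at least two points occur on each line. A preliminary observation is that $\mathcal{T}$ then contains two triples agreeing in exactly one coordinate: if every two triples agreed in at least two coordinates, an elementary argument shows they would all agree in two fixed coordinates, hence share a common coordinate, which we have just excluded.

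After relabelling the lines and their points, I may assume these two triples are $(P_1,P_3,R)$ and $(P_2,P_4,R)$ with $P_1\neq P_2$ on $\pi_{AB}$ and $P_3\neq P_4$ on $\pi_{BC}$. Using non-constancy of the third coordinate, I pick a triple with third coordinate $\neq R$; intersecting the agreement conditions forces it to be $(P_1,P_4,R')$, up to the symmetric relabelling. The heart of the proof is then a forcing computation: each agreement condition is a disjunction of three equalities, and combining the conditions coming from the three fixed triples rules out all but a few coordinate patterns for any further triple $(x,y,z)$. I would split into two cases. If some line carries at least three points of $\mathcal{T}$, choosing a triple with a new value on that line and running the forcing shows that every triple of $\mathcal{T}$ meets at least two of a fixed triple $\{P,Q,R\}$ of points, one on each line, which is exactly possibility $(ii)$. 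If instead every line carries exactly two points, then $\mathcal{T}$ lives on a $2\times 2\times 2$ grid on which pairwise agreement is equivalent to forbidding antipodal pairs; a direct inspection of the admissible configurations shows that either $\mathcal{T}$ is the distinguished four-element \emph{odd-weight} configuration $\langle P_1,P_3,P_6\rangle,\langle P_1,P_4,P_5\rangle,\langle P_2,P_4,P_6\rangle,\langle P_2,P_3,P_5\rangle$, which is possibility $(i)$, or it again admits a dominating triple $\{P,Q,R\}$ and we land in $(ii)$.

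The main obstacle is organisational rather than conceptual: making the case analysis exhaustive and verifying in each branch that the produced $\{P,Q,R\}$ is genuinely met in at least two coordinates by every triple. The forcing mechanism is what does the real work — in each branch one assumes a triple misses all but at most one of the three distinguished points and derives a triple that disagrees with one of the fixed triples in all three coordinates, contradicting Lemma \ref{hulplemmatje}. No further geometry is required beyond Lemmas \ref{lemmahoespaceserbuiten} and \ref{hulplemmatje} together with the $4$-space computation that encodes the spanning hypothesis $\dim(\alpha)=5$.
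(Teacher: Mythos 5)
Your proposal is correct and follows essentially the same route as the paper: both build the same three forced configurations (the paper's $D_1',D_2',D_3'$ are your $t_1,t_2,t_3$ up to relabelling the coordinates) and then split on whether the ``antipodal'' fourth plane is present, which is exactly the dichotomy between outcomes $(i)$ and $(ii)$. The only real difference is presentational: you encode the planes as triples and dispose of the degenerate case (every two planes sharing two of the distinguished points) by a purely combinatorial agreement argument, where the paper treats the corresponding case of planes pairwise meeting in lines geometrically via Theorem \ref{basisEKRthm}.
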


\begin{proof}
We prove the Lemma by construction and we start with a plane, we say $D_1'$, intersecting $\pi_{AB},\pi_{BC}$ and $\pi_{AC}$ in the points  $P,Q$ and $R'$ respectively.\\
\noindent
\emph{Case $(a)$: There exists a plane $D_2'$ such that $D_1'\cap D_2'$ is a point  \textnormal{(w.l.o.g. $P$,  see Lemma \ref{hulplemmatje})} and let $D_2'\cap \pi_{BC}$ be $Q'$ and $D_2'\cap \pi_{AC}$ be $R$.} In this case we know that there exists a third plane $D_3'$ intersecting $\pi_{AB}$ in a point $P'$ different from $P$ (as $\dim(\alpha)=5$). 
Then $D_3'$ needs at least a point of $D_2'$ and $D_1'$. This implies that $D_3'$ contains $Q$ and $R$ or $Q'$ and $R'$ (w.l.o.g. $Q$ and $R$) by Lemma \ref{hulplemmatje}. Now there are two possibilities:
\begin{itemize}
    \item [$i)$] There exists a plane $D_4'=\langle P',Q',R'\rangle$, and then, by construction, we cannot add another plane $D_i'$. (In the formulation of the lemma $P=P_1,P'=P_2,Q=P_3,Q'=P_4,R=P_5,R'=P_6$.)
    \item [$ii)$] There exists no plane $D_4'=\langle P',Q',R'\rangle$, then, by construction, we see that all the planes need to contain at least two of the three points $P,Q,R$ by Lemma \ref{hulplemmatje}.
\end{itemize}
\emph{Case $(b)$: all the planes $D_i'$ intersect pairwise in a line.} Then all these planes have to lie in a solid (contradiction since they span a $5$-space) or they go through a fixed line $l$. In this last case, $l$ cannot be one of the lines $\pi_{AB},\pi_{AC},\pi_{BC}$ and also, $l$ cannot intersect one of these lines, as otherwise all the planes $D_i'$ would contain the intersection point of this line and $l$ (which gives a contradiction since $\dim(\alpha)=5$). Consider now the disjoint
lines $l$ and $\pi_{AB}$. Then all the planes $D_i'$ would contain $l$ and a point of $\pi_{AB}$, but this implies that $\dim(\alpha)=3$ which also gives a contradiction. We  conclude that this case cannot happen.
\end{proof}

\noindent
\subsubsection*{\textbf{Case 1. There are four intersection planes $D'$}} \label{sectie4}

In this situation, using the notation from Lemma \ref{alpha5space}, there are four possibilities for the planes $D'=D\cap \langle A,B \rangle$, $D\in \mathcal{S}'$: 
$\langle P_1,P_3,P_6\rangle,\langle P_1,P_4,P_5\rangle,\langle P_2,P_4,P_6\rangle$ and 
    $\langle P_2,P_3,P_5\rangle$, where $P_1,P_2 \in \pi_{AB}, P_3,P_4 \in \pi_{BC}$ and $P_5,P_6 \in \pi_{AC}$. We show that the only solids of $\mathcal{S}$ in $\langle A,B \rangle$ are $A,B$ and $C$.

 \begin{figure}[h]
 \centering
	\definecolor{rvwvcq}{rgb}{0.08235294117647059,0.396078431372549,0.7529411764705882}
	\definecolor{ffqqqq}{rgb}{1,0,0}
	\definecolor{qqwuqq}{rgb}{0,0.39215686274509803,0}
	\definecolor{ffzztt}{rgb}{1,0.6,0.2}
	\begin{tikzpicture}[line cap=round,line join=round,>=triangle 45,x=1cm,y=1cm,scale=0.4]
	\clip(-14.8148293340618,-4.4090286207433795) rectangle (12.907239857010698,12.99016200957834);
	\draw [line width=0.8pt] (-3,0)-- (-8,-2);
	\draw [line width=0.8pt] (3,0)-- (8,-2);
	\draw [shift={(0,-8.75)},line width=0.8pt]  plot[domain=1.240498971965643:1.9010936816241504,variable=\t]({1*9.25*cos(\t r)+0*9.25*sin(\t r)},{0*9.25*cos(\t r)+1*9.25*sin(\t r)});
	\draw [line width=0.8pt] (0,9)-- (0,4.89);
	\draw [shift={(12.184695481335961,-8.270687622789788)},line width=0.8pt]  plot[domain=2.185212748834585:2.8403799855141356,variable=\t]({1*21.136306558546917*cos(\t r)+0*21.136306558546917*sin(\t r)},{0*21.136306558546917*cos(\t r)+1*21.136306558546917*sin(\t r)});
	\draw [shift={(-19.076615999999994,13.2282)},line width=0.8pt]  plot[domain=5.594682347837613:5.871118964360154,variable=\t]({1*20.819290507878883*cos(\t r)+0*20.819290507878883*sin(\t r)},{0*20.819290507878883*cos(\t r)+1*20.819290507878883*sin(\t r)});
	\draw [shift={(20.656043478260884,14.197173913043487)},line width=0.8pt]  plot[domain=3.564927750876754:3.818826858464239,variable=\t]({1*22.656028302053326*cos(\t r)+0*22.656028302053326*sin(\t r)},{0*22.656028302053326*cos(\t r)+1*22.656028302053326*sin(\t r)});
	\draw [shift={(-9.667822445561141,-6.440234505862649)},line width=0.8pt]  plot[domain=0.2462183601117205:1.0113742127191454,variable=\t]({1*18.217234489211787*cos(\t r)+0*18.217234489211787*sin(\t r)},{0*18.217234489211787*cos(\t r)+1*18.217234489211787*sin(\t r)});
	\draw [shift={(0,29.5)},line width=0.8pt]  plot[domain=4.463678991291166:4.961098969478212,variable=\t]({1*32.5*cos(\t r)+0*32.5*sin(\t r)},{0*32.5*cos(\t r)+1*32.5*sin(\t r)});
	\draw [line width=0.8pt,color=ffzztt] (0,9)-- (3.684301815982976,-0.27372072639319034);
	\draw [line width=0.8pt,color=ffzztt] (3.684301815982976,-0.27372072639319034)-- (-6.127184906123695,-1.250873962449478);
	\draw [line width=0.8pt,color=ffzztt] (-6.127184906123695,-1.250873962449478)-- (0,9);
	\draw [line width=0.8pt,color=qqwuqq] (0,9)-- (6.435577461551393,-1.3742309846205571);
	\draw [line width=0.8pt,color=qqwuqq] (6.435577461551393,-1.3742309846205571)-- (-3.451434474355194,-0.18057378974207766);
	\draw [line width=0.8pt,color=qqwuqq] (-3.451434474355194,-0.18057378974207766)-- (0,9);
	\draw [line width=0.8pt,color=ffqqqq] (0,6.54462447756822)-- (3.684301815982976,-0.27372072639319034);
	\draw [line width=0.8pt,color=ffqqqq] (3.684301815982976,-0.27372072639319034)-- (-3.451434474355194,-0.18057378974207766);
	\draw [line width=0.8pt,color=ffqqqq] (-3.451434474355194,-0.18057378974207766)-- (0,6.54462447756822);
	\draw [shift={(-1.0364662961582551,-0.8721952841998019)},line width=0.8pt,color=rvwvcq]  plot[domain=-0.06708770849682377:1.4319501194002715,variable=\t]({1*7.488890289063171*cos(\t r)+0*7.488890289063171*sin(\t r)},{0*7.488890289063171*cos(\t r)+1*7.488890289063171*sin(\t r)});
	\draw [shift={(2.1611714382086733,-1.4597376420489008)},line width=0.8pt,color=rvwvcq]  plot[domain=1.8345074236957246:3.1163983355940768,variable=\t]({1*8.290987572496439*cos(\t r)+0*8.290987572496439*sin(\t r)},{0*8.290987572496439*cos(\t r)+1*8.290987572496439*sin(\t r)});
	\draw [line width=0.8pt,color=rvwvcq] (-6.127184906123695,-1.250873962449478)-- (6.435577461551393,-1.3742309846205571);
	\begin{scriptsize}
	\draw[color=black] (-6.1164825675627545,4.651638940254569) node {\normalsize{$A$}};
	\draw[color=black] (5.366586724901813,5.778043557499055) node {\normalsize{$B$}};
	\draw[color=black] (0.1100318444275692,-3.890262740516124) node {\normalsize{$C$}};
	\draw [fill=black] (0,9) circle (1pt);
	\draw[color=black] (0.31905496444732935,9.4) node {\normalsize{$P_1$}};
	\draw [fill=black] (3.684301815982976,-0.27372072639319034) circle (1pt);
	\draw[color=black] (4.0211589876375715,0.1) node {\normalsize{$P_3$}};
	\draw [fill=black] (-6.127184906123695,-1.250873962449478) circle (1pt);
	\draw[color=black] (-6.60,-0.8) node {\normalsize{$P_6$}};
	\draw [fill=black] (6.435577461551393,-1.3742309846205571) circle (1pt);
	\draw[color=black] (6.80,-0.85) node {\normalsize {$P_4$}};
	\draw [fill=black] (-3.451434474355194,-0.18057378974207766) circle (1pt);
	\draw[color=black] (-3.725,0.2) node {\normalsize{$P_5$}};
	\draw [fill=black] (0,6.54462447756822) circle (1pt);
	\draw[color=black] (0.32905496444732935,7.0) node {\normalsize{$P_2$}};
	\draw[color=black] (-8,-2.5) node {\normalsize{$\pi_{AC}$}};
\draw[color=black] (8,-2.5) node {\normalsize {$\pi_{BC}$}};
\draw[color=black] (0.03892044182375051, 10.25) node {\normalsize{$\pi_{AB}$}};
	\end{scriptsize}
	\end{tikzpicture}

\caption{There are three elements $A,B,C$ in $\mathcal{S}$ with $A\cap B\cap C=\emptyset$ and $\dim(\alpha) = 5$} 	\label{alpha5}
	\end{figure}
	
	\newpage
	
\begin{prop}\emph{[Using Notation \ref{notk}]}
If $\mathcal{S}$ contains three solids $A,B,C$, with $A\cap B\cap C=\emptyset$, $\dim(\alpha)=5$, and so that there are exactly four intersection planes $D'$, see Lemma \ref{alpha5space}$(i)$,  then the only solids of $\mathcal{S}$ in $\langle A,B \rangle$ are $A,B$ and $C$.
\end{prop}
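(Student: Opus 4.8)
The plan is to reduce the statement to a configuration problem about a single solid meeting four fixed planes, and then to classify the solutions by an incidence and dimension count. Since $k=3$, the space $\pi_{ABC}$ is empty and $\pi_{AB},\pi_{AC},\pi_{BC}$ are three pairwise skew lines spanning the $5$-space $\langle A,B\rangle$, with $A=\langle\pi_{AB},\pi_{AC}\rangle$, $B=\langle\pi_{AB},\pi_{BC}\rangle$ and $C=\langle\pi_{AC},\pi_{BC}\rangle$. Let $E$ be a solid of $\mathcal{S}$ contained in $\langle A,B\rangle$. As $E$ meets every $D\in\mathcal{S}'$ in at least a line and $E\subseteq\langle A,B\rangle$, one has $E\cap D=E\cap D'$, and by Lemma \ref{alpha5space}$(i)$ the planes $D'$ attain exactly the four values $\Pi_1=\langle P_1,P_3,P_6\rangle$, $\Pi_2=\langle P_1,P_4,P_5\rangle$, $\Pi_3=\langle P_2,P_4,P_6\rangle$, $\Pi_4=\langle P_2,P_3,P_5\rangle$; hence $E$ meets each $\Pi_i$ in at least a line. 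The six points $P_1,\dots,P_6$ are precisely the pairwise intersections of the four planes, each $P_j$ lying on exactly two of the $\Pi_i$ and each $\Pi_i$ carrying exactly three of the $P_j$. Moreover the three solids $A,B,C$ are exactly the spans of two of the lines $\pi_{AB},\pi_{AC},\pi_{BC}$, equivalently the spans of the four points $P_j$ lying on two of these three lines.

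Next I would show that $E$ meets each $\Pi_i$ in exactly a line. Two distinct $\Pi_i$ meet in a point and so span a $4$-space, whence $E$ can contain at most one of them; a short computation, using that $\pi_{AB},\pi_{AC},\pi_{BC}$ are skew and span $\langle A,B\rangle$, rules out $\Pi_i\subseteq E$ altogether, so each $l_i:=E\cap\Pi_i$ is a line. These four lines span $E$: if not, they lie in a common plane, hence pairwise meet, forcing all six $P_j$ into that plane and contradicting that they span the $5$-space. Record that $l_i\cap l_j\neq\emptyset$ if and only if the point $\Pi_i\cap\Pi_j$ lies in $E$. Since a line of the plane $\Pi_i$ passes through at most two of its three non-collinear special points, each $l_i$ contains at most two of the $P_j$, so in the meeting pattern of the four lines every index has ``degree'' at most two; and by Theorem \ref{basisEKRthm} applied with $k=1$ the four lines cannot pairwise meet, as they are neither coplanar nor concurrent (no point lies on all four $\Pi_i$).

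The heart of the argument is to prove that $E$ contains exactly four of the six points $P_j$. It contains at most four, because any five of the $P_j$ include two full axis lines together with an external point and so span a $4$-space. To obtain at least four I would eliminate the configurations with three or fewer: whenever some $l_i$ misses a special point of $\Pi_i$, the freedom in $l_i$ forces an extra direction transverse to the span of the points already lying in $E$, and summing these contributions yields $\dim\langle l_1,l_2,l_3,l_4\rangle\geq 4$ in projective dimension, contradicting that $E$ is a solid; running this over the few meeting patterns of maximum degree at most two (a triangle with an isolated index, a path, a matching, and the smaller cases) discards all of them. Consequently $E$ contains four of the $P_j$, which must be the four lying on two of the lines $\pi_{AB},\pi_{AC},\pi_{BC}$; these four points recover the two axis lines, so $E$ contains two of them and therefore equals $A$, $B$ or $C$. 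The main obstacle is exactly this dimension count: organising the bookkeeping over the possible intersection patterns of the four lines $l_i$ and verifying that too few forced incidences always overfill the solid.
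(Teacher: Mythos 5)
Your overall strategy (classify a putative solid $E$ by which of the six points $P_1,\dots,P_6$ it contains, using the meeting pattern of the four lines $l_i=E\cap\Pi_i$) is genuinely different from the paper's argument, and several of your preliminary observations are correct: the correspondence between $l_i\cap l_j\neq\emptyset$ and $\Pi_i\cap\Pi_j\in E$, the degree-at-most-two bound, and the fact that any five of the $P_j$ span a $4$-space. However, the proof as written has a genuine gap exactly where you locate "the heart of the argument": the claim that $E$ must contain at least four of the $P_j$ is never actually proved. The mechanism you describe (``the freedom in $l_i$ forces an extra direction transverse to the span of the points already lying in $E$, and summing these contributions yields $\dim\langle l_1,l_2,l_3,l_4\rangle\geq 4$'') is not a valid argument on its own: four lines with a path-shaped meeting pattern can perfectly well fit inside a solid as an abstract configuration, so no formal dimension count over the incidence graph can exclude these cases. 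What actually kills, say, the path $1$--$2$--$3$--$4$ (i.e.\ $P_1,P_4,P_2\in E$, $P_3,P_5,P_6\notin E$) is the specific position of the points: $P_1,P_2\in\pi_{AB}$ forces $\pi_{AB}\subset E$, hence $\langle \pi_{AB},P_4\rangle\subset E$, and then one checks that $\langle E,\Pi_4\rangle$ is the whole $5$-space, so $E\cap\Pi_4$ is only a point. Each of the remaining low-edge configurations needs its own such computation, and none of them is supplied. A secondary, smaller issue: you assert that four points of $E$ ``must be the four lying on two of the lines,'' but four-point subsets such as $\{P_1,P_2,P_3,P_5\}$ also span a solid; this is in fact excluded by your own degree bound (a $2$-regular meeting graph on four vertices is a $4$-cycle, and the three $4$-cycles correspond exactly to the three axis pairs), but that link is not made explicit. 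Similarly, the claim that $\Pi_i\subseteq E$ is impossible is asserted via ``a short computation'' that again requires the same kind of case-specific span argument.

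For comparison, the paper avoids all of this bookkeeping with a containment trick: if $P_1\notin E$, then $E$ must meet each of the planes $\langle P_1,\pi_{AC}\rangle$ and $\langle P_1,\pi_{BC}\rangle$ in a line, these two lines are disjoint and so span $E$, whence $E\subset\langle P_1,C\rangle$; if also $P_2\notin E$ then $E\subseteq\langle P_1,C\rangle\cap\langle P_2,C\rangle=C$. Iterating this forces $P_2,P_5\in E$ together with a further point of $P_1P_6$, giving a plane of $A$ inside $E$, and the required points on $P_1P_3$ and $P_1P_4$ then produce a line of $E$ disjoint from that plane, which is impossible in a solid. If you want to keep your route, you would need to carry out the exhaustive elimination of the meeting patterns with at most three edges using the concrete geometry of the three pairwise skew lines, not just dimension arithmetic on the incidence graph.
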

\begin{proof}
Let $P_1,\dots, P_6$ be 
the intersection points of $D\cap \langle A,B \rangle$, $D\in \mathcal{S}'$, with the lines $\pi_{AB},\pi_{AC},\pi_{BC}$, and let $E$ be a solid in $\langle A,B \rangle$ different from $A,B,C$. The solid $E$ cannot contain all the points $P_1,\dots, P_6$, by its dimension so we can suppose that $P_1 \notin E$. We will first show that $E$ contains the point $P_2$.
As $E$ has a line in common with every plane intersection $D'=D\cap \langle A,B \rangle$, with $D\in \mathcal{S}'$, $E$ has at least a point in common with every line of these planes $D'$. This implies that $E$ has at least a point in common with $P_1P_3,P_1P_4, P_1P_5, $ and $P_1P_6$ or equivalently, a line $l_A$ in common with $\langle P_1,\pi_{AC} \rangle$ and a line $l_B$ in common
with $\langle P_1,\pi_{BC} \rangle$. 
Hence, $E=\langle l_A,l_B\rangle$ and so $E\subset \langle P_1,C \rangle$. If $P_2 \notin E$ then we find by symmetry that $E\subset \langle P_2,C \rangle$, and so that $E\subseteq \langle P_1,C \rangle \cap \langle P_2,C \rangle$ and  $E= C$, a contradiction. Then $P_2 \in E$; furthermore $E$ cannot contain $P_2,\dots, P_6$, by the dimension, and so we can suppose that $P_6 \notin E$. Then, by the previous arguments and symmetry, we know that $P_5$ lies in $E$. In $A$, the solid $E$ needs an extra point $P$ of $P_1P_6$ since $E$ shares a line with $\langle P_1,P_3,P_6\rangle$. This gives that $E$ contains the plane $\gamma = \langle P,P_2,P_5 \rangle$ of $A$. As $E$ also needs at least a point of each line $P_1P_3,P_1P_4$, $E$ needs at least one extra line, disjoint to $\gamma$. This gives the contradiction, again by the dimension, and so $E$ cannot be different from $A, B, C$.
\end{proof}\\

There are at most $4\cdot \bigl (\qbin{3}{1}- \qbin{2}{1} \bigr)$ solids in $\mathcal{S}'$. The first factor of this number follows since every solid in $\mathcal{S}'$ meets $\langle A,B \rangle$ in one of the four intersection planes. The second factor follows as each of these intersection planes is contained in at most $\qbin{3}{1}- \qbin{2}{1}$ solids of $\mathcal{S}'$: any two solids, intersecting $\langle A,B\rangle$ in different intersection planes, have to intersect in at least a point $Q$ outside of $\langle A,B\rangle$. There are only $3$ solids, $A,B,C$, in $\langle A,B\rangle$.

\noindent
\subsubsection*{\textbf{Case 2. Every intersection plane $D'$ contains at least two of the points $P,Q,R$}}\label{sectionbla}

\vspace{0.3cm}

Note that in this situation we have at least the red, green and blue plane (see Figure \ref{figuur52}) as intersection planes $D'=D\cap \langle A,B \rangle, D\in \mathcal{S}'$. In the following proposition, we prove how the solids in $\langle A,B\rangle$ lie with respect to the points $P,Q,R$.

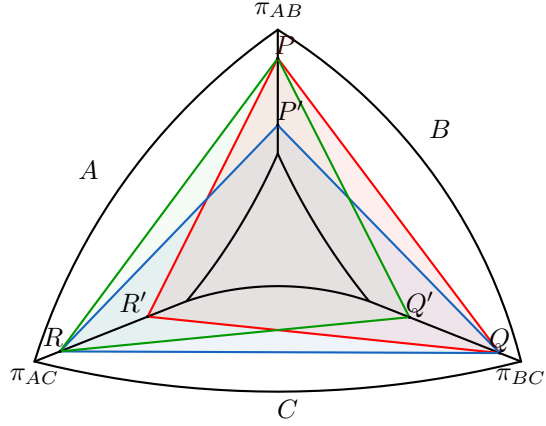
\begin{figure}[h!]
\centering
\definecolor{qqzzqq}{rgb}{0,0.6,0}
\definecolor{rvwvcq}{rgb}{0.08235294117647059,0.396078431372549,0.7529411764705882}
\definecolor{ffqqqq}{rgb}{1,0,0}
\definecolor{ttqqqq}{rgb}{0.2,0,0}
\begin{tikzpicture}[line cap=round,line join=round,>=triangle 45,x=1cm,y=1cm, scale=0.4]
\clip(-12.618909221655894,-5) rectangle (12.981195715718668,12.318870368998288);
\fill[line width=0.8pt,color=ffqqqq,fill=ffqqqq,fill opacity=0.07] (7.297741915189411,-1.7190967660757643) -- (-4.278288438424419,-0.5113153753697677) -- (0,8.052186212769172) -- cycle;
\fill[line width=0.8pt,color=rvwvcq,fill=rvwvcq,fill opacity=0.07] (7.297741915189411,-1.7190967660757643) -- (-7.161978419875809,-1.6647913679503237) -- (0,5.8335104515300324) -- cycle;
\fill[line width=0.8pt,color=qqzzqq,fill=qqzzqq,fill opacity=0.04] (-7.161978419875809,-1.6647913679503237) -- (4.32577570981604,-0.530310283926416) -- (0,8.052186212769172) -- cycle;
\draw [line width=0.8pt] (-3,0)-- (-8,-2);
\draw [line width=0.8pt] (3,0)-- (8,-2);
\draw [shift={(0,-8.75)},line width=0.8pt]  plot[domain=1.240498971965643:1.9010936816241504,variable=\t]({1*9.25*cos(\t r)+0*9.25*sin(\t r)},{0*9.25*cos(\t r)+1*9.25*sin(\t r)});
\draw [line width=0.8pt] (0,9)-- (0,4.89);
\draw [shift={(12.184695481335961,-8.270687622789788)},line width=0.8pt]  plot[domain=2.185212748834585:2.8403799855141356,variable=\t]({1*21.136306558546917*cos(\t r)+0*21.136306558546917*sin(\t r)},{0*21.136306558546917*cos(\t r)+1*21.136306558546917*sin(\t r)});
\draw [shift={(-19.076615999999994,13.2282)},line width=0.8pt]  plot[domain=5.594682347837613:5.871118964360154,variable=\t]({1*20.819290507878883*cos(\t r)+0*20.819290507878883*sin(\t r)},{0*20.819290507878883*cos(\t r)+1*20.819290507878883*sin(\t r)});
\draw [shift={(20.656043478260884,14.197173913043487)},line width=0.8pt]  plot[domain=3.564927750876754:3.818826858464239,variable=\t]({1*22.656028302053326*cos(\t r)+0*22.656028302053326*sin(\t r)},{0*22.656028302053326*cos(\t r)+1*22.656028302053326*sin(\t r)});
\draw [shift={(-9.667822445561141,-6.440234505862649)},line width=0.8pt]  plot[domain=0.2462183601117205:1.0113742127191454,variable=\t]({1*18.217234489211787*cos(\t r)+0*18.217234489211787*sin(\t r)},{0*18.217234489211787*cos(\t r)+1*18.217234489211787*sin(\t r)});
\draw [shift={(0,29.5)},line width=0.8pt]  plot[domain=4.463678991291166:4.961098969478212,variable=\t]({1*32.5*cos(\t r)+0*32.5*sin(\t r)},{0*32.5*cos(\t r)+1*32.5*sin(\t r)});
\draw [line width=0.8pt,color=ffqqqq] (7.297741915189411,-1.7190967660757643)-- (-4.278288438424419,-0.5113153753697677);
\draw [line width=0.8pt,color=ffqqqq] (-4.278288438424419,-0.5113153753697677)-- (0,8.052186212769172);
\draw [line width=0.8pt,color=ffqqqq] (0,8.052186212769172)-- (7.297741915189411,-1.7190967660757643);
\draw [line width=0.8pt,color=rvwvcq] (7.297741915189411,-1.7190967660757643)-- (-7.161978419875809,-1.6647913679503237);
\draw [line width=0.8pt,color=rvwvcq] (-7.161978419875809,-1.6647913679503237)-- (0,5.8335104515300324);
\draw [line width=0.8pt,color=rvwvcq] (0,5.8335104515300324)-- (7.297741915189411,-1.7190967660757643);
\draw [line width=0.8pt,color=qqzzqq] (-7.161978419875809,-1.6647913679503237)-- (4.32577570981604,-0.530310283926416);
\draw [line width=0.8pt,color=qqzzqq] (4.32577570981604,-0.530310283926416)-- (0,8.052186212769172);
\draw [line width=0.8pt,color=qqzzqq] (0,8.052186212769172)-- (-7.161978419875809,-1.6647913679503237);
\begin{scriptsize}
\draw[color=black] (-8,-2.5) node {\normalsize{$\pi_{AC}$}};
\draw[color=black] (8,-2.5) node {\normalsize{$\pi_{BC}$}};
\draw[color=black] (0.03892044182375051,9.659303911615472) node {\normalsize{$\pi_{AB}$}};
\draw[color=black] (-6.218882987312253,4.340170996849841) node {\normalsize{$A$}};
\draw[color=black] (5.329608795547827,5.705509926843159) node {\normalsize{$B$}};
\draw[color=black] (0.3233660522390234,-3.567416972694787) node {\normalsize{$C$}};
\draw [fill=black] (-7.161978419875809,-1.6647913679503237) circle (1pt);
\draw[color=black] (-7.385109990014873,-1.178073845206482) node {\normalsize{$R$}};
\draw [fill=ttqqqq] (0,8.052186212769172) circle (1pt);
\draw[color=ttqqqq] (0.23803236911444153,8.49307690891285) node {\normalsize{$P$}};
\draw [fill=black] (7.297741915189411,-1.7190967660757643) circle (1pt);
\draw[color=black] (7.3,-1.291852089372592) node {\normalsize{$Q$}};
\draw [fill=black] (-4.278288438424419,-0.5113153753697677) circle (1pt);
\draw[color=black] (-4.739765813152835,-0.01184684250385698) node {\normalsize{$R'$}};
\draw [fill=black] (0,5.8335104515300324) circle (1pt);
\draw[color=black] (0.40869973536360527,6.331290269756762) node {\normalsize{$P'$}};
\draw [fill=black] (4.32577570981604,-0.530310283926416) circle (1pt);
\draw[color=black] (4.675383891592699,-0.09718052562843926) node {\normalsize{$Q'$}};
\end{scriptsize}
\end{tikzpicture}
 \caption{There are three elements $A,B,C$ in $\mathcal{S}$ with $A\cap B\cap C=\emptyset$ and $\dim(\alpha) = 5$} \label{figuur52}
\end{figure}

\begin{prop}\emph{[Using Notation \ref{notk}]}
If $\mathcal{S}$ contains three solids $A,B,C$, with $A\cap B\cap C=\emptyset$, $\dim(\alpha)=5$, and so that every intersection plane $D'$ contains at least two of the points $P,Q,R$, see Lemma \ref{alpha5space}$(ii)$, then all the solids of $\mathcal{S}$ in $\langle A,B \rangle$, also contain at least two of the points $P,Q,R$.
\end{prop}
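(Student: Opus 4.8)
The plan is to work inside the $5$-space $\langle A,B\rangle$ and to collapse the triangle $\tau=\langle P,Q,R\rangle$ by projecting to the quotient $\langle A,B\rangle/\tau\cong\PG(2,q)$. By Lemma~\ref{alpha5space}$(ii)$ and the remark preceding the proposition I may fix three intersection planes $D_1',D_2',D_3'$ (the red, green and blue planes), with $\langle P,Q\rangle\subseteq D_1'$, $\langle P,R\rangle\subseteq D_2'$ and $\langle Q,R\rangle\subseteq D_3'$. Using that $\pi_{AB},\pi_{BC},\pi_{AC}$ are pairwise disjoint lines spanning $\langle A,B\rangle$, a routine application of Grassmann's identity records three facts that drive the argument: $D_1'\cap D_2'=\{P\}$, $D_1'\cap D_3'=\{Q\}$, $D_2'\cap D_3'=\{R\}$; each $D_i'$ meets $\tau$ in exactly the corresponding side of the triangle $PQR$; and $\langle D_1',D_2',D_3'\rangle=\langle A,B\rangle$. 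In particular, in the quotient the three planes map to three points $\overline{D_1'},\overline{D_2'},\overline{D_3'}$ that are \emph{non-collinear}, since the $D_i'$ span $\langle A,B\rangle$.

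Now let $E$ be a solid of $\mathcal{S}$ contained in $\langle A,B\rangle$. As $E$ meets every element of $\mathcal{S}$ in at least a line and $E\subseteq\langle A,B\rangle$, we have that $E\cap D_i'=E\cap D_i$ contains a line $\ell_i$. Since $\ell_i$ and the side of $\tau$ lying in $D_i'$ are two lines of the plane $D_i'$, they meet; hence $E$ contains a point of each of the three sides $\langle P,Q\rangle,\langle P,R\rangle,\langle Q,R\rangle$. These three points lie in $E\cap\tau$, and they cannot reduce to a single point, as no point lies on all three sides of the triangle; therefore $\dim(E\cap\tau)\geq 1$. If $\dim(E\cap\tau)=2$ then $\tau\subseteq E$ and $E\supseteq\{P,Q,R\}$; and if $E\cap\tau$ equals one of the sides, then $E$ contains its two endpoints. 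In both situations $E$ already contains two of $P,Q,R$.

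It remains to exclude the case where $m:=E\cap\tau$ is a line of $\tau$ that is not a side, and this is the crux of the proof. The naive attempt — arguing that the vertices $P=D_1'\cap D_2'$, $Q=D_1'\cap D_3'$, $R=D_2'\cap D_3'$ are forced into $E$ because $\ell_1,\ell_2,\ell_3$ meet pairwise — fails, since three lines of the solid $E$ may be pairwise skew. The plan is to bypass this by projecting from $\tau$. Because $m$ is not a side, $\ell_i\cap\tau=\ell_i\cap(D_i'\cap\tau)$ is a single point, so $\ell_i\not\subseteq\tau$ and its image is the point $\overline{D_i'}$; thus $\overline{E}$ contains all three non-collinear points $\overline{D_1'},\overline{D_2'},\overline{D_3'}$. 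On the other hand, from $\dim E=3$ and $\dim(E\cap\tau)=1$ the image $\overline{E}$ is a \emph{line} of $\PG(2,q)$, which cannot contain three non-collinear points. This contradiction rules out the remaining case, so every solid of $\mathcal{S}$ in $\langle A,B\rangle$ contains at least two of $P,Q,R$, as claimed.
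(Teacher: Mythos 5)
Your proof is correct, and it takes a genuinely different route from the paper's. The paper argues directly inside $A$ and $B$: assuming $P\notin E$, it uses the three lines $D_i'\cap A\in\{PR,PR',P'R\}$ and $D_i'\cap B\in\{PQ,PQ',P'Q\}$ to show, via a case distinction on whether $E\cap A$ and $E\cap B$ are lines or planes, that $E\cap A$ must be a line through $R$ and $E\cap B$ a line through $Q$. Your argument instead collapses the triangle plane $\tau=\langle P,Q,R\rangle$ and turns the whole question into a dimension count in the quotient $\langle A,B\rangle/\tau\cong\PG(2,q)$: the only real work is checking that each $D_i'$ meets $\tau$ exactly in a side (which needs $D_i'\neq\tau$, guaranteed because $R'\neq R$, $Q'\neq Q$, $P'\neq P$ for the red, green and blue planes) and that the three $D_i'$ together with $\tau$ span $\langle A,B\rangle$ (which needs those same inequalities), after which the non-side case dies because a line of $\PG(2,q)$ cannot carry three non-collinear points. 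You correctly identify and avoid the trap that $\ell_1,\ell_2,\ell_3$ need not pairwise meet inside $E$. What each approach buys: the paper's proof yields the finer structural information that $E$ meets $A$ and $B$ in specific lines through the missing vertices' opposite points, whereas your quotient argument is shorter, avoids the two-case dimension analysis, and only uses the conclusion actually needed for the subsequent count of solids in $\langle A,B\rangle$. Both proofs rely on exactly the same input (the three guaranteed intersection planes from the remark before the proposition), so neither is more general than the other.
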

\begin{proof}
Let $E$ be a solid of $\mathcal{S}$ in $\langle A, B \rangle$, different from $A,B$ and $C$. Suppose $P\notin E$, then we have to prove that $E$ contains the points $R$ and $Q$. We find that $E\cap A$ and $E\cap B$ are subspaces that meet the lines $PR$, $PR'$, $P'R$ and $PQ,PQ', P'Q$, respectively, as $E$ meets every intersection plane $D'$ in at least a line. Hence, $E$ meets $A$ in a line $l_{AE}$ through $R$ and a point of $PR'$, or $E$ has a plane $\gamma_{AE}$ in common with $A$. By symmetry, $E$ meets $B$ in a line $l_{BE}$ through $Q$ and a point of $PQ'$, or $E$ has a plane $\gamma_{BE}$ in common with $B$. 
\begin{itemize}
    \item [$a)$] If $\dim(A\cap E)=\dim(B\cap E)=2$, then the planes $\gamma_{AE}$ and $\gamma_{BE}$ meet in a point of $\pi_{AB}$ as they cannot contain the line $\pi_{AB}$ since $P\notin E$. Hence, $E$ contains two planes meeting in a point, which gives a contradiction since $\dim(E)=3$.
    \item [$b)$] If $\dim(A\cap E)=2$ and $\dim(B\cap E)=1$, then $\gamma_{AE}\cap \pi_{AB}=l_{BE} \cap \pi_{AB}$. First note that $l_{BE} \cap \pi_{AB}$ is not empty by the dimension of $E$. Now, if  $\gamma_{AE}\cap \pi_{AB}\not =l_{BE} \cap \pi_{AB}$, then $\pi_{AB}\subset E$, which gives a contradiction as $P\notin E$. Since $l_{BE}$ can only meet $\pi_{AB}$ in the point $P$, we find a contradiction, again as $P\notin E$.
\end{itemize}
Hence we know
that $E$ contains a line $l_{AE} \subset A$ through $R$ and a line $l_{BE} \subset B$ through $Q$, which proves the proposition.
\end{proof}
\\

There are at most $\bigl (3 \cdot \qbin{2}{1}-2 \bigr ) \bigl (\qbin{3}{1}- \qbin{2}{1} \bigr )$ solids not in $\langle A,B\rangle$. This follows as two solids $D_1,D_2$, intersecting $\langle A,B\rangle$ in the intersection planes $D'_1$ and $D'_2$ meeting in a point, then $D_1$ and $D_2$ have to intersect in at least a point not in $\langle A,B\rangle$. And there are at most $3 \cdot \qbin{2}{1}-2$ intersection planes $D'$. There are at most $\qbin{3}{1}+3q \qbin{3}{1}$ solids in $\langle A,B\rangle$, namely all the solids through the plane $\langle P,Q,R \rangle$ and all solids through precisely two of the three points $P,Q,R$ in $\langle A,B \rangle$.

\begin{remark}
Note that if $\mathcal{S}$ contains three elements $A,B,C,$ with $A\cap B\cap C=\emptyset$, and if $\dim(\alpha)=5$, then the number of elements of $\mathcal{S}$ is at most $f(3,q)=3q^4+6q^3+5q^2+q+1$, and so we will not consider these maximal sets of solids in our classification.
\end{remark}
\subsubsection{General case $k>3$ and $\alpha$ is a $(k+2)$-space}
In this case we prove that all the $k$-spaces of $\mathcal{S}$ contain $\pi_{ABC}$. This implies that we can investigate this case by considering the quotient space of $\pi_{ABC}$ and use the previous results for $k=3$. 
\begin{prop}\emph{[Using Notation \ref{notk}]} If $\mathcal{S}$ contains three $k$-spaces $A,B,C$,  with $\dim(A\cap B\cap C)=k-4$,  and if $\dim(\alpha)=k+2$, then every $k$-space in $\mathcal{S}$ contains $\pi_{ABC}$.
\end{prop}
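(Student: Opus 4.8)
The plan is to split $\mathcal{S}$ into the members lying outside $\langle A,B\rangle$ and those lying inside, dispatch the former immediately, and rule out the latter by a projection argument. Since $\dim(\alpha)=k+2$, Corollary \ref{lemmagevolg}$(d)$ gives $\alpha=\langle A,B\rangle$, so that the spaces $D'=D\cap\langle A,B\rangle$, $D\in\mathcal{S}'$, span $\langle A,B\rangle$; by Lemma \ref{lemmahoespaceserbuiten} each such $D'$ is a $(k-1)$-space containing $\pi_{ABC}$, hence every element of $\mathcal{S}'$ already contains $\pi_{ABC}$. It therefore remains to consider a $k$-space $E\in\mathcal{S}$ with $E\subset\langle A,B\rangle$, and I would argue by contradiction, assuming $\pi_{ABC}\not\subset E$.

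The core step is to project from $\pi_{ABC}$ onto the quotient $\Sigma:=\langle A,B\rangle/\pi_{ABC}$, which is a $5$-dimensional projective space; write $\bar X$ for the image of a subspace $X\subseteq\langle A,B\rangle$. Since every $D'$ contains $\pi_{ABC}$, each $\bar{D'}$ is a plane, and because the $D'$ span $\langle A,B\rangle$ the planes $\bar{D'}$ span $\Sigma$. As $E\subset\langle A,B\rangle$ we have $E\cap D=E\cap D'$, so the defining intersection property of $\mathcal{S}$ forces $\dim(E\cap D')\geq k-2$ for every $D\in\mathcal{S}'$. Setting $\mu:=E\cap\pi_{ABC}$ and using $\pi_{ABC}\subseteq D'$, one gets $E\cap D'\cap\pi_{ABC}=\mu$, and the standard projection-dimension identity $\dim\bar S=\dim S-\dim(S\cap\pi_{ABC})-1$ yields
\begin{equation*}
\dim\overline{E\cap D'}=\dim(E\cap D')-\dim(\mu)-1\geq (k-2)-\dim(\mu)-1 .
\end{equation*}

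The contradiction then comes from a dimension count. The assumption $\pi_{ABC}\not\subset E$ means $\dim(\mu)\leq k-5$, so $\dim\overline{E\cap D'}\geq 2$; but $\overline{E\cap D'}\subseteq\bar{D'}$ and $\dim\bar{D'}=2$. If $\dim(\mu)<k-5$ this is already impossible, so necessarily $\dim(\mu)=k-5$, whence $\overline{E\cap D'}=\bar{D'}\subseteq\bar E$ for every $D\in\mathcal{S}'$. Since $\dim\bar E=\dim(E)-\dim(\mu)-1=4$, the space $\bar E$ is a hyperplane of $\Sigma$, so all the planes $\bar{D'}$ lie in a hyperplane and cannot span $\Sigma$, contradicting the previous paragraph. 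Hence $\pi_{ABC}\subset E$, and every element of $\mathcal{S}$ contains $\pi_{ABC}$; this is precisely what permits passing to $\PG(n,q)/\pi_{ABC}$ and invoking the $k=3$ results of Subsubsection \ref{sectie5}. The main obstacle is the careful bookkeeping of projective dimensions under projection from $\pi_{ABC}$, and in particular handling uniformly the two a priori possible values $k-5$ and $k-6$ of $\dim(E\cap\pi_{ABC})$; the single displayed inequality disposes of both at once.
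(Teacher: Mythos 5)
Your proof is correct, and for the in-$\langle A,B\rangle$ case it takes a genuinely different (and arguably tighter) route than the paper. The paper argues directly inside $\langle A,B\rangle$: a $k$-space $E$ not through $\pi_{ABC}$ would have to meet $\pi_{ABC}$ in a $(k-5)$-space and each of $\pi_{AB}\setminus\pi_{ABC}$, $\pi_{AC}\setminus\pi_{ABC}$, $\pi_{BC}\setminus\pi_{ABC}$ in a line, forcing $\dim(E)\geq k+1$; this uses the finer conclusion of Lemma \ref{lemmahoespaceserbuiten} (the three $(k-3)$-spaces in $\pi_{AB},\pi_{AC},\pi_{BC}$) and, as stated, glosses over the a priori possibility $\dim(E\cap\pi_{ABC})=k-6$. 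You instead pass to the quotient $\Sigma=\langle A,B\rangle/\pi_{ABC}\cong\PG(5,q)$ and use only two facts: each $D'$ is a $(k-1)$-space through $\pi_{ABC}$, and the $D'$ span $\alpha=\langle A,B\rangle$ (Corollary \ref{lemmagevolg}$(d)$). The inequality $\dim\overline{E\cap D'}\geq (k-2)-\dim(\mu)-1$ forces $\dim(\mu)=k-5$ and $\bar{D'}\subseteq\bar{E}$ for all $D\in\mathcal{S}'$, so the planes $\bar{D'}$ would all lie in the hyperplane $\bar{E}$ of $\Sigma$, contradicting spanning. This handles both candidate values of $\dim(E\cap\pi_{ABC})$ uniformly and does not need the intersections with $\pi_{AB},\pi_{AC},\pi_{BC}$ at all; the trade-off is that it leans on the global hypothesis $\dim(\alpha)=k+2$ rather than on the structure of a single $D'$. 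Both proofs dispatch the elements of $\mathcal{S}'$ identically via Lemma \ref{lemmahoespaceserbuiten}.
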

\begin{proof}
By Lemma \ref{lemmahoespaceserbuiten}, we know that all the $k$-spaces of $\mathcal{S}$ outside of $\langle A,B \rangle$ contain $\pi_{ABC}$. It is also clear that $A,B$ and $C$ contain $\pi_{ABC}$.\\
Suppose that there is a $k$-space $E$ in $\langle A,B \rangle$, not through $\pi_{ABC}$. As $E$ has to meet all the $(k-1)$-spaces $D'_i$ in at least a $(k-2)$-space, $E$ has to meet $\pi_{ABC}$ in a $(k-5)$-space,  $\pi_{AB} \setminus \pi_{ABC}$ in a line,  $\pi_{BC} \setminus \pi_{ABC}$ in a line and  $\pi_{AC} \setminus \pi_{ABC}$ in a line. This would imply
that $\dim(E)=k+1$, which gives a contradiction.
\end{proof}
 \\

Clearly, the previous proposition implies that in order to have an estimate of the number of $k$-spaces in and outside of $\langle A, B \rangle$, we can use the results for $k=3$ in Section \ref{sectie5}: $|\mathcal{S}|\leq 4\cdot \bigl (\qbin{3}{1}- \qbin{2}{1} \bigr) +3$ or $|\mathcal{S}|\leq \bigl (3 \cdot \qbin{2}{1} -2 \bigr ) \bigr (\qbin{3}{1}- \qbin{2}{1} \bigr ) + \qbin{3}{1}\bigr (3q^2+1 \bigr )$. In both cases, $|\mathcal{S}|<\theta_{k+1}+q^4+2q^3+3q^2=f(k,q)$.
\\

To conclude this section we give a theorem which summarizes the cases studied in this section.

\begin{prop}\label{overzicht1}\emph{[Using Notation \ref{notk}]}
In the projective space $\PG(n,q)$, with $n \geq k+2$ and $k\geq3$, let $\mathcal{S}$ be a maximal set of $k$-spaces pairwise intersecting in at least a $(k-2)$-space such that $\mathcal{S}$ contains three $k$-spaces
$A,B,C$,  with $\dim(A\cap B\cap C)=k-4$,  and such that $|\mathcal{S}|\geq f(k,q)$. Then we have one of the following possibilities:
\begin{itemize}
    \item [i)] there are no $k$-spaces of $\mathcal{S}$ outside of $\langle A,B \rangle$ and $\mathcal{S}$ is Example \ref{voorbeeldenalgemeen}$(x)$, 
    \item [ii)]  $\dim(\alpha)=k-1$ and $\mathcal{S}$ is Example \ref{voorbeeldenalgemeen}$(v)$, 
    \item [iii)]  $\dim(\alpha)=k$ and $\mathcal{S}$ is Example \ref{voorbeeldenalgemeen}$(iv)$, 

    \item [iv)]  $\dim(\alpha)=k+1$ and $\mathcal{S}$ is Example \ref{voorbeeldenalgemeen}$(vi)$. 
    

\end{itemize}
\end{prop}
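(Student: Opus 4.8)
The plan is to argue by a case distinction on $\dim(\alpha)$, preceded by the degenerate situation $\mathcal{S}'=\emptyset$, and then to invoke the structural propositions already proved in this section together with the counting bounds that accompany them. First I would record the dimension range of $\alpha$: since $A$ and $B$ meet in the $(k-2)$-space $\pi_{AB}$, the span $\langle A,B\rangle$ is a $(k+2)$-space, and by Lemma \ref{lemmahoespaceserbuiten} every $D\in\mathcal{S}'$ meets $\langle A,B\rangle$ in a $(k-1)$-space. Hence $\alpha$ is spanned by $(k-1)$-spaces lying inside the $(k+2)$-space $\langle A,B\rangle$, so as soon as $\mathcal{S}'\neq\emptyset$ we have $k-1\le\dim(\alpha)\le k+2$. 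These four values, together with the case $\mathcal{S}'=\emptyset$, exhaust all possibilities.

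If $\mathcal{S}'=\emptyset$, every element of $\mathcal{S}$ is contained in the $(k+2)$-space $\langle A,B\rangle$. As any two $k$-spaces of a $(k+2)$-space meet in at least a $(k-2)$-space by Grassmann's dimension property, the maximality of $\mathcal{S}$ forces it to consist of all $k$-spaces in $\langle A,B\rangle$, which is Example \ref{voorbeeldenalgemeen}$(x)$; this yields case $(i)$. For $\mathcal{S}'\neq\emptyset$ I would then treat the admissible dimensions in turn: $\dim(\alpha)=k-1$ is precisely Proposition \ref{prop5}, giving Example \ref{voorbeeldenalgemeen}$(v)$ (case $(ii)$); $\dim(\alpha)=k$ is Proposition \ref{prop4}, giving Example \ref{voorbeeldenalgemeen}$(iv)$ (case $(iii)$); and $\dim(\alpha)=k+1$ is covered by Proposition \ref{prop6kmeerdan3}, together with Proposition \ref{prop6k=3} in the base case $k=3$, which under the size hypothesis produces Example \ref{voorbeeldenalgemeen}$(vi)$ (case $(iv)$).

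The remaining task is to discard the two configurations that do not appear in the list, namely $\dim(\alpha)=k+2$ and the secondary sub-case of $\dim(\alpha)=k+1$ (the situation in which the line set $\mathcal{L}$ carries no line meeting all the others). For these I would rely on the explicit counting carried out for $\dim(\alpha)=k+2$ in Subsection \ref{sectionbla} and in that secondary sub-case: in every such configuration one bounds $|\mathcal{S}|$ by $\theta_{k+1}+q^4+2q^3+3q^2$, and separately by a quantity strictly below $3q^4+6q^3+5q^2+q+1$ when $k=3$. In both regimes this is at most $f(k,q)$, so the hypothesis on $|\mathcal{S}|$ removes these cases and leaves exactly the four listed structures.

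I expect the main obstacle to be quantitative rather than conceptual: assembling the various size estimates so that each discarded configuration is provably below the piecewise threshold $f(k,q)=\max\{3q^4+6q^3+5q^2+q+1,\theta_{k+1}+q^4+2q^3+3q^2\}$, and in particular checking the boundary where a Case 2 family could a priori reach $f(k,q)$ exactly. This is where the interplay between the two branches of $f$ (the $k=3$ branch versus the general branch) must be handled carefully, since the estimate that excludes a configuration in one regime is not the one that excludes it in the other.
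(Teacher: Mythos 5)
Your proposal is correct and follows essentially the same route as the paper: the paper states this proposition without a separate proof precisely because it is a summary of the section, whose subsections are organized by the same case distinction on $\dim(\alpha)\in\{k-1,k,k+1,k+2\}$ (plus the degenerate case where all elements lie in $\langle A,B\rangle$), with Propositions \ref{prop5}, \ref{prop4}, \ref{prop6k=3}/\ref{prop6kmeerdan3} supplying cases $(ii)$--$(iv)$ and the counting remarks discarding $\dim(\alpha)=k+2$ and the residual sub-case of $\dim(\alpha)=k+1$. Your observation about the boundary $|\mathcal{S}|=f(k,q)$ versus the strict inequality used elsewhere is a fair reading of a looseness already present in the paper's own statements, not a gap in your argument.
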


\section{Every three elements of $\mathcal{S}$ meet in at least a $(k-3)$-space }\label{noconfi}
Throughout this section we suppose that every three elements of $\mathcal{S}$ meet in at least a $(k-3)$-space. Moreover, to avoid trivial cases, we can suppose that there exist two $k$-spaces in $\mathcal{S}$ intersecting in precisely a $(k-2)$-space. We can find those two $k$-spaces as otherwise all subspaces would pairwise intersect in a $(k-1)$-space and the classification in this case is known: all the $k$-spaces go through a fixed $(k-1)$-space or all the $k$-spaces lie in a $(k+1)$-dimensional space, see Theorem \ref{basisEKRthm}. We also suppose that $\mathcal{S}$ is not a $(k-2)$- or a $(k-3)$-pencil as in this case either $\mathcal{S}$ is Example \ref{voorbeeldenalgemeen}$(i)$ or we can investigate the quotient space and use the known Erd{\H{o}}s-Ko-Rado results \cite{EKRplanes}. We begin this section with a useful lemma.

\begin{lemma}\label{lemmaonZZ}
Let $\mathcal{S}$ be a maximal set of $k$-spaces in $\PG(n,q)$ pairwise intersecting in at least a $(k-2)$-space such that for every $X,Y,Z \in \mathcal{S}$, $\dim(X\cap Y \cap Z) \geq k-3$, and such that there is no point contained in all elements of $\mathcal{S}$. Then there exist three elements $A,B,C$ of $\mathcal{S}$  such that 
\begin{itemize}
    \item [$a)$] $\pi=A \cap B\cap C$ is a $(k-3)$-space, 
    \item [$b)$] at least two of the three subspaces $\pi_{AB}=A\cap B,\pi_{BC}=B\cap C,\pi_{AC}=A\cap C$ have dimension $k-2$, and at most one of them has dimension $k-1$.
    \item [$c)$] $\zeta=\langle \pi_{AB},\pi_{BC},\pi_{AC}\rangle$ has dimension $k$ or $k+1$.
\end{itemize}
Every $k$-space in $\mathcal{S}$ not through $\pi$ meets the space $\zeta=\langle \pi_{AB},\pi_{BC},\pi_{AC}\rangle$ in at least a $(k-1)$-space.

\end{lemma}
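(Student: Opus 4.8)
The plan is to extract $A,B,C$ from the standing hypotheses and then verify (a)--(c) and the meeting property in order. The section assumes two elements of $\mathcal{S}$ meet in precisely a $(k-2)$-space; I call them $A,B$ and put $\pi_{AB}=A\cap B$. Because no point lies in every member of $\mathcal{S}$, not all members can contain $\pi_{AB}$ (a fixed point of $\pi_{AB}$ would otherwise be common to all of $\mathcal{S}$), so I choose $C\in\mathcal{S}$ with $\pi_{AB}\not\subseteq C$. Then $A\cap B\cap C=\pi_{AB}\cap C$ is a proper subspace of the $(k-2)$-space $\pi_{AB}$, while the triple hypothesis gives $\dim(A\cap B\cap C)\ge k-3$; hence $\pi:=A\cap B\cap C$ is exactly a $(k-3)$-space, which is (a).

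For (b) I would show that $\pi_{AC}=A\cap C$ and $\pi_{BC}=B\cap C$ cannot both be $(k-1)$-spaces. Both lie in the $k$-space $C$ and satisfy $\pi_{AC}\cap\pi_{BC}=A\cap B\cap C=\pi$; were both hyperplanes of $C$ they would be distinct (a shared $(k-1)$-space would sit in $A\cap B=\pi_{AB}$, which is only $(k-2)$-dimensional) and hence meet in a $(k-2)$-space, contradicting that they meet in the $(k-3)$-space $\pi$. Since $\pi_{AB}$ is already a $(k-2)$-space, at most one of the three intersections is a $(k-1)$-space and at least two are $(k-2)$-spaces. For (c) I pass to $\PG(n,q)/\pi$, where $A,B,C$ become planes with empty common intersection and $\pi_{AB},\pi_{AC},\pi_{BC}$ become their pairwise intersections (points, at most one a line, by (b)); writing $\overline{\zeta}$ for the span of these three, a short incidence check shows the two point-intersections are distinct and the third is not contained in the line they span (each failure would put a point into the empty triple intersection), so $2\le\dim\overline{\zeta}\le 3$ and therefore $\dim\zeta\in\{k,k+1\}$.

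The meeting property is the core of the statement and I would settle it with one span computation. Relabel so that $\pi_{AB}$ and $\pi_{AC}$ are the two $(k-2)$-dimensional intersections, fix $D\in\mathcal{S}$ with $\pi\not\subseteq D$, and set $U=D\cap\pi_{AB}$, $V=D\cap\pi_{AC}$, $W=D\cap\pi_{BC}$, all lying in $D\cap\zeta$. The triple hypothesis together with $\pi\not\subseteq D$ forces $\dim U=\dim V=k-3$ (neither can reach $k-2$, as that would put $\pi$ inside $D$), and $U\cap V=D\cap\pi$ has dimension at most $k-4$, so $\dim\langle U,V\rangle\ge 2(k-3)-(k-4)=k-2$. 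Finally $W\not\subseteq\langle U,V\rangle$: since $\langle U,V\rangle\subseteq\langle\pi_{AB},\pi_{AC}\rangle\subseteq A$, a containment $W\subseteq\langle U,V\rangle$ would give $W\subseteq A\cap\pi_{BC}\subseteq A\cap B\cap C=\pi$, hence $W\subseteq D\cap\pi$ of dimension $\le k-4$, contradicting $\dim W\ge k-3$. Thus $\dim(D\cap\zeta)\ge\dim\langle U,V,W\rangle\ge k-1$.

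The only step where I anticipate real friction is (c): one has to exclude the degenerate configurations in which the three pairwise intersections collapse onto a single line (or point), and the cleanest route I see is the quotient-by-$\pi$ picture, where any such collinearity immediately forces a common point of the three planes and contradicts $A\cap B\cap C=\pi$. By contrast the final meeting property, which at first glance seems to need a case split on $\dim(D\cap A)$, dissolves into the single observation $W\not\subseteq\langle U,V\rangle$; the rest is routine Grassmann bookkeeping.
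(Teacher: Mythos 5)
Your proof is correct, and while it follows the same overall skeleton as the paper's (produce $A,B,C$, pin down the dimensions of the pairwise intersections and of $\zeta$, then bound $\dim(D\cap\zeta)$ by spanning the three traces $D\cap\pi_{XY}$), the execution of the final step is genuinely different and cleaner. The paper splits the meeting property into cases according to whether $\dim(\pi_{BC})$ is $k-2$ or $k-1$, and in the latter case further according to $\dim(G\cap\pi_{BC})$, arguing in each case that the three traces are ``linearly independent'' spaces pairwise meeting in $G\cap\pi$. Your single observation that $\langle U,V\rangle\subseteq\langle\pi_{AB},\pi_{AC}\rangle\subseteq A$ while $A\cap\pi_{BC}=\pi$, so that $W\subseteq\langle U,V\rangle$ would force $W\subseteq D\cap\pi$ of dimension at most $k-4$, absorbs all of these cases at once and makes the dimension count $\dim\langle U,V,W\rangle\geq (k-2)+1$ immediate. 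Your construction of $A,B,C$ is also more explicit than the paper's appeal to ``otherwise $\mathcal{S}$ is a $(k-2)$-pencil''; note that both arguments rely on the section's standing assumption that some pair of elements meets in \emph{exactly} a $(k-2)$-space (without it the lemma fails, e.g.\ for all $k$-spaces in a fixed $(k+1)$-space), and you are right to flag that dependence explicitly. The quotient-by-$\pi$ verification of (c) is an equivalent reformulation of the paper's direct argument that $\dim\zeta=k-1$ would force $\zeta\subseteq A\cap B\cap C$.
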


\proof If every three $k$-spaces in $ \mathcal{S}$ meet (at least) in a $(k-2)$-space,
then $\mathcal{S}$ is a $(k-2)$-pencil, and so there is a point contained in all the $k$-spaces of $\mathcal{S}$. Therefore, there exist three elements $A,B,C \in \mathcal{S}$ such that $\pi=A\cap B\cap C$ is a $(k-3)$-space.
Let $\pi_{AB}=A\cap B$, $\pi_{BC}=B\cap C $ and $\pi_{AC}=A\cap C$, and let $\zeta=\langle \pi_{AB},\pi_{BC},\pi_{AC}\rangle$. Note that at least two of the three subspaces $\pi_{AB},\pi_{BC},\pi_{AC}$ have dimension $k-2$. Otherwise, if, for example, $\dim(\pi_{AB})=\dim(\pi_{AC})=k-1$, then the $k$-space $A$ contains two $(k-1)$-spaces, $\pi_{AB}$ and $\pi_{AC}$, meeting in a $(k-3)$-space, a contradiction. W.l.o.g. we can suppose that $\dim(\pi_{AB})=\dim(\pi_{AC})=k-2$ and $\dim(\pi_{BC})\in \{k-1,k-2\}.$ This also implies that the dimension of $\zeta$ is at most $k+1$. 
On the other hand, note that $\zeta$ has at least dimension $k$. Otherwise, if $\zeta=\langle \pi_{AB},\pi_{BC},\pi_{AC}\rangle$ is a
$(k-1)$-space.  then $\zeta=\langle \pi_{AB},\pi_{AC}\rangle$ and so $\zeta \subset A$. By the same argument, $\zeta \subset B$, and $\zeta \subset C$. Hence $\zeta\subset A  \cap B \cap C=\pi$, a contradiction.\\
\textcolor{black}{\textsc{Case 1.} Suppose that $\pi_{AB},\pi_{AC}$ and $\pi_{BC}$ are $(k-2)$-spaces. Then, $\zeta$ is a $k$-space and} consider a $k$-space $G$ in $\mathcal{S}$ not through $\pi$. This $k$-space exists since there is no point contained in all elements of $\mathcal{S}$, and hence not all elements of $\mathcal{S}$ contain $\pi$.  Then $G$ meets $\pi$ in a $(k-4)$-space $\pi_G$ and it contains at least a $(k-3)$-space of $\pi_{AB},\pi_{BC}$ and $\pi_{AC}$. This follows since any three elements of $\mathcal{S}$ meet in at least  a $(k-3)$-space  and $\pi \nsubseteq G$. 
Since the three subspaces $G\cap \pi_{AB},G\cap \pi_{BC}$ and $G\cap \pi_{AC}$ have dimension at least $k-3$, since they pairwise meet in the $(k-4)$-space $\pi_G$, and since
$\pi_{AB},\pi_{AC}$ and $\pi_{BC}$ span at least a $k$-space, $G$ contains the subspace $\langle G\cap \pi_{AB},G\cap \pi_{BC},G\cap \pi_{AC}\rangle $, with at least dimension  $k-1$, in $\zeta$.\\
\textcolor{black}{\textsc{Case 2.}
Suppose that  $\dim(\pi_{AB})=\dim(\pi_{AC})=k-2$ and $\dim(\pi_{BC})=k-1$. They meet in  the $(k-3)$-space $\pi$. Now, $\zeta$ is a $(k+1)$-space and consider a $k$-space $G$ not through $\pi$. As before $G$ meets $\pi$ in a $(k-4)$-space; the spaces $G \cap \pi_{AB}$ and $G \cap \pi_{AC}$ are $(k-3)$-spaces  otherwise $G$ goes through $\pi$ and finally  $\dim(G \cap \pi_{BC})\in\{k-3,k-2\}$.\\
\textsl{Case 2a.} $\dim(G \cap \pi_{BC})=k-3$. Then $G \cap \pi_{AC}$ and $G \cap \pi_{BC}$ cannot be contained in $\pi_{AB}$ otherwise $\dim(G \cap \pi)=k-3$. Hence, the latter and $G \cap \pi_{AB}$ are linearly independent spaces (i.e. the span of two of them not meet the other space) $(k-3)$-spaces pairwise intersecting in $G \cap \pi$. Therefore
$$\dim\langle\pi_{AB} \cap G, \pi_{AC} \cap G, \pi_{BC} \cap G\rangle=k-1.$$
\textsl{Case 2b.} $\dim(G \cap \pi_{BC})=k-2$. Note that $G \cap \pi_{BC}$ cannot meet $\pi_{AB}$ in a $k-3$ space, otherwise $G$ goes through $\pi$. Then, again $G \cap \pi_{XY}$ with $\{X,Y\}\subset \{A,B,C\}$ are linearly independent spaces $(k-3)$-spaces pairwise intersecting in $G \cap \pi$ and $$\dim\langle\pi_{AB} \cap G, \pi_{AC} \cap G, \pi_{BC} \cap G\rangle=
k.$$
Hence, the $k$-space $G$ is inside of $\zeta$.\\
So, in any case, we get that  a $k$-space not through $\pi$ meets $\zeta$ in at least a $(k-1)$-space. 
}
\endproof

\begin{theorem}\label{thmnoconfigalgemeenk}
Let $\mathcal{S}$ be a maximal set of $k$-spaces pairwise intersecting in at least a $(k-2)$-space in $\PG(n,q)$.  If for every three elements $X,Y,Z$ of $\mathcal{S}$: $\dim(X\cap Y\cap Z)\geq k-3$, and if there is no point contained in all elements of $\mathcal{S}$, then $\mathcal{S}$ is one of the following examples:
\begin{itemize}
    \item [(i)] Example \ref{voorbeeldenalgemeen}$(ii)$: Star.
    \item[(ii)] Example \ref{voorbeeldenalgemeen}$(iii)$: Generalized Hilton-Milner example.
\end{itemize}
\end{theorem}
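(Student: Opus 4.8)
The plan is to invoke Lemma~\ref{lemmaonZZ} to fix three $k$-spaces $A,B,C\in\mathcal{S}$ together with their pairwise intersections $\pi_{AB},\pi_{AC},\pi_{BC}$, the $(k-3)$-space $\pi=A\cap B\cap C$, and the space $\zeta=\langle\pi_{AB},\pi_{AC},\pi_{BC}\rangle$, whose dimension is $k$ or $k+1$. Everything reduces to one key property, the \emph{trace property}: every element of $\mathcal{S}$ meets $\zeta$ in at least a $(k-1)$-space. Lemma~\ref{lemmaonZZ} already supplies this for the elements of $\mathcal{S}$ not through $\pi$, so the genuine work is to establish it for the elements through $\pi$. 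For such a $G$ I would argue by contradiction from $\dim(G\cap\zeta)\le k-2$, using careful dimension bookkeeping of the forced intersections of $G$ with $A,B,C$ (whose traces on $\zeta$ already meet only in $\pi$) and with elements outside $\pi$ supplied by the no-common-point hypothesis; the conclusion is that some pair of elements would meet in less than a $(k-2)$-space, or some triple in less than a $(k-3)$-space, contradicting the standing assumptions.

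Once the trace property is available I would split on $\dim\zeta$. If $\dim\zeta=k$, then every element of $\mathcal{S}$ meets the $k$-space $\zeta$ in a hyperplane, so $\mathcal{S}$ is contained in the family of all $k$-spaces meeting $\zeta$ in at least a $(k-1)$-space. Since any two members of that family share a common hyperplane-intersection of $\zeta$, hence meet in at least a $(k-2)$-space, maximality of $\mathcal{S}$ forces equality, and $\mathcal{S}$ is the Star of Example~\ref{voorbeeldenalgemeen}$(ii)$.

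If $\dim\zeta=k+1$, I would first use maximality to show that \emph{every} hyperplane $H$ of $\zeta$ lies in $\mathcal{S}$: by the trace property each $Z\in\mathcal{S}$ satisfies $\dim(H\cap Z)\ge\dim\bigl(H\cap(Z\cap\zeta)\bigr)\ge k+(k-1)-(k+1)=k-2$, so $H$ is compatible with all of $\mathcal{S}$. Calling the elements contained in $\zeta$ \emph{type~1} and those meeting $\zeta$ in exactly a $(k-1)$-space \emph{type~2}, for two type-2 elements $X,Y$ with traces $m_X,m_Y$ and any hyperplane $H$ of $\zeta$ one has $X\cap Y\cap H=(m_X\cap m_Y)\cap H$; applying the triple condition $\dim(X\cap Y\cap H)\ge k-3$ to an $H$ not containing $m_X\cap m_Y$ forces $\dim(m_X\cap m_Y)\ge k-2$. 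Hence the type-2 traces are $(k-1)$-spaces of $\zeta\cong\PG(k+1,q)$ pairwise meeting in a $(k-2)$-space, and Theorem~\ref{basisEKRthm}, applied inside $\zeta$, shows they either all pass through a common $(k-2)$-space $\pi'$ or all lie in a common hyperplane $H_0$ of $\zeta$. In the first case $\mathcal{S}$ is the generalized Hilton--Milner Example~\ref{voorbeeldenalgemeen}$(iii)$ with $\nu=\zeta$; in the second every element of $\mathcal{S}$ meets the $k$-space $H_0$ in at least a $(k-1)$-space, so by the previous paragraph $\mathcal{S}$ is again a Star. In each case maximality pins down $\mathcal{S}$ exactly.

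The main obstacle is precisely the trace property for the elements through $\pi$: unlike the elements covered by Lemma~\ref{lemmaonZZ}, these contain $\pi$ and can a priori escape $\zeta$, and excluding this needs the delicate dimension count indicated above, together with a careful use of the no-common-point hypothesis to guarantee enough elements with sufficiently independent traces. Once this is settled, the dichotomy between the Star and the generalized Hilton--Milner example, and the reduction of the $(k+1)$-dimensional case to Theorem~\ref{basisEKRthm}, are comparatively routine.
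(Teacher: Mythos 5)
Your argument hinges entirely on the claimed ``trace property'' that \emph{every} element of $\mathcal{S}$ meets $\zeta$ in at least a $(k-1)$-space, and you correctly identify the elements through $\pi$ as the obstacle --- but that property is false, so no dimension bookkeeping can deliver it. The Generalized Hilton--Milner example (Example \ref{voorbeeldenalgemeen}$(iii)$), which is one of the two configurations the theorem must produce, is itself a counterexample. Take $A,B$ of type $2$ with $A\cap B=\pi_0$ (the distinguished $(k-2)$-space of that example) and $C$ of type $1$ not through $\pi_0$. Then $\pi=A\cap B\cap C=\pi_0\cap C$ is a $(k-3)$-space, all of $\pi_{AB},\pi_{AC},\pi_{BC}$ are $(k-2)$-spaces, and $\zeta=\langle \pi_{AB},\pi_{AC},\pi_{BC}\rangle$ is a $k$-dimensional hyperplane of $\nu$ containing $\pi_0$; this triple satisfies every conclusion of Lemma \ref{lemmaonZZ}. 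Now any type-$2$ element $F$ whose $(k-1)$-dimensional trace $F\cap\nu$ on $\nu$ is not contained in $\zeta$ satisfies $F\cap\zeta=\pi_0$, a $(k-2)$-space, yet $F$ contains $\pi$ and belongs to $\mathcal{S}$, and $\mathcal{S}$ satisfies all hypotheses of the theorem. Your $\dim\zeta=k$ branch, which concludes ``Star'' outright, would therefore misclassify this maximal family (indeed the Generalized Hilton--Milner family is maximal and is not contained in any Star), and the configuration it represents is never recovered by your $\dim\zeta=k+1$ branch.

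The paper avoids this by splitting the case $\dim\zeta=k$ further: if some $D\in\mathcal{S}$ not through $\pi$ meets each of $A,B,C$ in exactly a $(k-2)$-space, then (and only then) the elements through $\pi$ are forced to meet $\zeta$ in a $(k-1)$-space and one obtains the Star; otherwise every element not through $\pi$ meets one of $A,B,C$ in a $(k-1)$-space, and a separate argument produces the $(k+1)$-space $\nu=\langle\zeta,C\rangle$ of the Generalized Hilton--Milner example. Your idea of handling $\dim\zeta=k+1$ via Theorem \ref{basisEKRthm} applied to the traces inside $\zeta$ is not unreasonable (the paper's Case 2 reaches the same example by more elementary means), but it cannot rescue the argument, because the missing configuration lives in the $\dim\zeta=k$ case. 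To repair the proof you would need to replace the trace property by the paper's dichotomy (or an equivalent one) rather than try to prove it.
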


\begin{proof}
From Lemma \ref{lemmaonZZ}, it follows that we can suppose that there are three $k$-spaces $A,B,C$ with $\dim(A\cap B\cap C)=k-3$, $\dim(\pi_{AB})=\dim(\pi_{AC})=k-2$ and $\dim(\pi_{BC})\in \{k-1,k-2\}$

\textsc{Case 1.} \emph{$\dim(\pi_{BC})=k-2$.}
In this case we know, again from Lemma \ref{lemmaonZZ}, that $\zeta=\langle \pi_{AB},\pi_{AC}, \pi_{BC}\rangle$ has dimension $k$ and that any element of $\mathcal{S}$, not through $\pi=A\cap B\cap C$, meets $\zeta$ in at least a $(k-1)$-space. 

\emph{Case 1.1. Suppose that there exists a $k$-space $D$, not containing $\pi$, with $\dim(D\cap A)=\dim(D\cap B)=\dim(D\cap C)=k-2$}. Let $\pi_{AD}, \pi_{BD}$ and $\pi_{CD}$ be these $(k-2)$-spaces. Note that each of them contains the $(k-4)$-space $\pi_D=D\cap \pi$ and that they are contained in $\zeta$. We prove that all elements of $\mathcal{S}$ meet $\zeta$ in at least a $(k-1)$-space. From Lemma \ref{lemmaonZZ}, it follows that we only have to check that all elements of $\mathcal{S}$ through $\pi$ have this property. 
 Let $E$ be a $k$-space in $\mathcal{S}$ through $\pi$. Then $E$ contains a $(k-3)$-space of  $\pi_{AD}, \pi_{BD}$ and $\pi_{CD}$. At least two of these $(k-3)$-spaces are different, { since $\pi$ is not contained in $D$,} and span together with $\pi$ at least a $(k-1)$-space contained in the $k$-space $\zeta$. Hence every $k$-space of $\mathcal{S}$ meets $\zeta$ in at least a $(k-1)$-space. Then $\mathcal{S}$ is Example \ref{voorbeeldenalgemeen}$(ii)$.
 
\emph{Case 1.2. There exists no $k$-space $D$ in $\mathcal{S}$, not containing $\pi$, with $\dim(D\cap A)=\dim(D\cap B)=\dim(D\cap C)=k-2$}. 

 In this case we will prove that if not every $k$-space of $\mathcal{S}$ meets $\zeta$ in a $(k-1)$-space, that then $\mathcal{S}$ is the second example described in the theorem. 
Let $D$ be a $k$-space of $\mathcal{S}$ not containing $\pi$ and meeting $A,B$ or $C$ in a $(k-1)$-space. W.l.o.g. we can suppose that $C\cap D$ is the $(k-1)$-space $\pi_{CD}$ and that $A\cap D$ and $B\cap D$ are $(k-2)$-spaces ($\pi_{AD}$ and $\pi_{BD}$ respectively). Note that these subspaces $\pi_{AD}, \pi_{BD}, \pi_{CD}$ contain the
$(k-4)$-space $\pi_D=D\cap \pi$ and that $\pi_{AD}, \pi_{BD}\subset \zeta$. {This follows since $D$ meets $\pi_{AB}, \pi_{AC},\pi_{BC}$ in a $(k-3)$-space, and $D\cap \pi_{AB}$ and $D \cap \pi_{AC}$ span $\pi_{AD}$. The same argument holds for the space $B$.} Suppose that $\mathcal{S}$ is not a Star, and so, suppose that $F \in \mathcal{S}$ is a $k$-space that meets $\zeta$ in (at most) a $(k-2)$-space. As every $k$-space in $\mathcal{S}$, not containing $\pi$, meets $\zeta$ in a $(k-1)$-space (Lemma \ref{lemmaonZZ}), we see that $F$ contains $\pi$. Now, since every three elements of $\mathcal{S}$ meet in a $(k-3)$-space, $F$ also contains a $(k-3)$-space of the two $(k-2)$-spaces $\pi_{AD}$ and $\pi_{BD}$ in $\zeta$ 
($\pi_{ADF}$, $\pi_{BDF}$ respectively). 
As $F$ has no $(k-1)$-space in common with $\zeta$, and since $\pi_{AD},\pi_{BD}\subset \zeta$, $\pi_{CD}\nsubseteq \zeta$, 
we find that $\pi_{ADF}=\pi_{BDF}=\pi_{AB}\cap D$ and that $\pi_{CDF}\nsubseteq \zeta$. Hence, $F\cap \zeta=\pi_{AB}$ and $C\cap F =\langle \pi_{CDF}, \pi\rangle$.
Let $\nu=\langle \zeta,C\rangle$. Then we prove that every $k$-space in $\mathcal{S}$ is contained in $\nu$ or contains $\pi_{AB}$ and meets $\nu$ in a $(k-1)$-space. 
Every $k$-space in $\mathcal{S}$ containing $\pi_{AB}$ must contain at least a $(k-2)$-space of $C$. Hence, this $k$-space meets $\nu$ in at least a $(k-1)$-space. 
Consider now a $k$-space $E\in \mathcal{S}$ not through $\pi_{AB}$. From the arguments above it follows that, if $\pi\subset E$, then $E\subset \nu$. Indeed, if $\pi \not \subseteq E$, then, by Lemma \ref{lemmaonZZ}, $E$ contains a $(k-1)$-space in $\zeta$ and a point in $C \setminus \zeta$ as otherwise we have \emph{Case $1.1$}, and so $\mathcal{S}$ would be a Star, a contradiction. Hence, $E\subset \nu$.

\textsc{Case 2.} \emph{For every three $k$-spaces $ X,Y,Z \in \mathcal{S}$, we have that $ \dim(X \cap Y \cap Z) \geq k-2$ or two of these spaces meet in a $(k-1)$-space.}
Since we suppose that there is no point contained in all elements of $\mathcal{S}$, we see that not every three elements meet in a $(k-2)$-space.
Recall that $A\cap B =\pi_{AB}$ is a $(k-2)$-space. Hence, every other element of $\mathcal{S}$ contains $\pi_{AB}$ or meets $A$ or $B$ in a $(k-1)$-space. Note that the elements of $\mathcal{S}$, not through $\pi_{AB}$, are contained in $\langle A,B \rangle$. By Example \ref{voorbeeldenalgemeen}$(x)$, we may suppose that not all elements of $\mathcal{S}$ are contained in $\langle A,B \rangle$. Hence, let $D \in  \mathcal{S}$ be a $k$-space not contained in $\langle A,B \rangle$. 

If $D\cap A=D\cap B=\pi_{AB}$ then, by symmetry, it follows that every element of $\mathcal{S}$, not through $\pi_{AB}$, meets two of the three elements $A,B,D$ in a $(k-1)$-space. This is a contradiction since a $k$-space cannot contain two $(k-1)$-spaces, meeting in a $(k-3)$-space.\\ 
Hence, every $k$-space in $\mathcal{S}$, not in $\langle A,B \rangle$, meets $A$ or $B$ in a $(k-1)$-space through $\pi_{AB}$. W.l.o.g. we suppose that $B\cap D=\pi_{BD}$ is a $(k-1)$-space, and so $A\cap D=\pi_{AD}=\pi_{AB}$. Consider now an element $E\in \mathcal{S}$ not through $\pi_{AB}$. Then, $E\subset \langle A,B \rangle$, and  since both $A,B$ and $A,D$ meet in a $(k-2)$-space, $E$ contains a $(k-1)$-space in $A$ or $E$ contains a $(k-1)$-space in both $D$ and $B$.
Note that $E$ cannot contain a $(k-1)$-space of $D$, since $E\subset \langle A,B \rangle$, but $D\cap \langle A,B \rangle$ is a $(k-1)$-space through $\pi_{AB}\nsupseteq E$.
Hence, $E$ must contain a $(k-1)$-space of $A$ and a $(k-2)$-space of   $B \cap D$ and so every element of $\mathcal{S}$, not through $\pi_{AB}$, is contained in $\nu=\langle A,\pi_{BD} \rangle$.\\
To conclude this proof, we show that every element of $\mathcal{S}$, through $\pi_{AB}$, meets $\nu=\langle A,\pi_{BD} \rangle$ in at least a $(k-1)$-space, which proves that $\mathcal{S}$ is the Generalized Hilton-Milner example.
So, consider a $k$-space $F\in \mathcal{S}$, $\pi_{AB}\subset F$. Then $F$ must contain a $(k-2)$-space $\pi_{EF}$ of $E$. Hence, $F$ contains the $(k-1)$-space $\langle \pi_{EF},\pi_{AB}\rangle \subset \langle A, \pi_{BD} \rangle$.
\end{proof}

\section{There is at least a point contained in all $k$-spaces of $\mathcal{S}$}\label{sectionsolidpointk}

To classify all maximal sets of $k$-spaces pairwise intersecting in at least a $(k-2)$-space, we also have to investigate the families of $k$-spaces such that there is a subspace contained in all its elements.\\
More precisely, in this section we will consider a set $\mathcal{S}$ of $k$-spaces of $\PG(n,q)$ such that there is at least a point contained in all elements of $\mathcal{S}$. So, let $g$, with $0 \leq g\leq k-3$, be the dimension of the maximal subspace $\gamma$ contained in all elements of $\mathcal{S}$. In the quotient space of $\PG(n,q)$ with respect to $\gamma$, the set $\mathcal{S}$ of $k$-spaces corresponds to a set $\mathcal{T}$ of $(k-g-1)$-spaces in $\PG(n-g-1,q)$ that pairwise intersect in at least a $(k-g-3)$-space, and so that there is no point contained in all elements of $\mathcal{T}$. 
Since we are interested in sets $\mathcal{S}$ of $k$-spaces with $|\mathcal{S}|>f(k,q)$, this corresponds with sets $\mathcal{T}$ of $(k-g-1)$-spaces with $|\mathcal{T}|>f(k,q)$.\\
Now, if $k-g-1>2$, we can use Theorem \ref{overzicht1} and Theorem \ref{thmnoconfigalgemeenk} for the sets $\mathcal{T}$ in $\PG(n-g-1,q)$. For each example we show that it can be extended to one of the examples discussed in the previous sections. 

\begin{enumerate}
    \item $\mathcal{T}$ is the set of $k'$-spaces of Theorem \ref{overzicht1}(i), so that $\mathcal{T}$ is Example \ref{voorbeeldenalgemeen}$(x)$ : There exists a $(k'+2)$-space $\rho'$ such that $\mathcal{T}$ is the set of all $k'$-spaces in $\rho$. Then $\mathcal{S}$ can be extended to Example \ref{voorbeeldenalgemeen}$(x)$ in $\PG(n,q)$, with $\rho=\langle \rho', \gamma \rangle$.
    \item $\mathcal{T}$ is the set of $k'$-spaces of Theorem \ref{overzicht1}(ii), so that $\mathcal{T}$ is Example \ref{voorbeeldenalgemeen}$(v)$ : There are a $(k'+2)$-space $\rho'$, and a $(k'-1)$-space $\alpha' \subset \rho'$  so that $\mathcal{T}$ contains all $k'$-spaces in $\rho'$ that meets $\alpha'$ in at least a $(k'-2)$-space, and all $k'$-spaces in $\PG(n-g-1,q)$ through $\alpha'$. Then $\mathcal{S}$ can be extended to Example \ref{voorbeeldenalgemeen}$(v)$ in $\PG(n,q)$, with $\rho=\langle \rho', \gamma \rangle$ and $\alpha=\langle \alpha', \gamma \rangle$.
    \item $\mathcal{T}$ is the set of $k'$-spaces of Theorem \ref{overzicht1}(iii), so that $\mathcal{T}$ is Example \ref{voorbeeldenalgemeen}$(iv)$ : There are a $(k'+2)$-space $\rho'$, a $k'$-space $\alpha' \subset \rho'$ and a $(k'-2)$-space $\pi'\subset \alpha'$ so that $\mathcal{T}$ contains all $k'$-spaces in $\rho'$ that meets $\alpha'$ in at least a $(k'-1)$-space,  all $k'$-spaces in $\rho'$ through $\pi'$, and all $k'$-spaces in $\PG(n-g-1,q)$ that contain a $(k'-1)$-space of $\alpha'$ through $\pi'$. Then $\mathcal{S}$ can be extended to Example \ref{voorbeeldenalgemeen}$(iv)$ in $\PG(n,q)$, with $\pi=\langle \pi', \gamma \rangle$, $\rho=\langle \rho', \gamma \rangle$ and $\alpha=\langle \alpha', \gamma \rangle$.
    \item $\mathcal{T}$ is the set of $k'$-spaces of Theorem \ref{overzicht1}(iv). Since we suppose  that $|\mathcal{S}|=|\mathcal{T}|>f(k,q)$, we know that $\mathcal{T}$ is Example \ref{voorbeeldenalgemeen}$(vi)$: There are two $(k'+2)$-spaces $\rho'_1,\rho'_2$ intersecting in a $(k'+1)$-space $\alpha'=\rho'_1\cap \rho'_2$. There are two $(k'-1)$-spaces $\pi'_A,\pi'_B\subset \alpha'$, with $\pi'_A\cap \pi'_B$ the $(k'-2)$-space $l'$, there is a point $P'\in \alpha'\setminus \langle \pi'_A,\pi'_B \rangle$, and let $P'_A, P'_B \subset l'$ be two different $(k'-3)$-spaces. Then $\mathcal{T}$ contains 
    \begin{itemize}
        \item [$\circ$] all $k'$-spaces in $\alpha'$,
        \item [$\circ$]  all $k'$-spaces through $\langle P',l' \rangle$,
        \item [$\circ$]  all $k'$-spaces in $\rho'_1$ through $P'$ and a $(k'-2)$-space in $\pi'_A$ through $P'_A$,
        \item [$\circ$] all $k'$-spaces in $\rho'_1$ through $P'$ and a $(k'-2)$-space in $\pi'_B$ through $P'_B$,
        \item [$\circ$] all $k'$-spaces in $\rho'_2$ through $P'$ and a $(k'-2)$-space in $\pi'_A$ through $P'_B$,
        \item [$\circ$] all $k'$-spaces in $\rho'_2$ through $P'$ and a $(k'-2)$-space in $\pi'_B$ through $P'_A$.
    \end{itemize}

    Then $\mathcal{S}$ can be extended to Example \ref{voorbeeldenalgemeen}$(vi)$ in $\PG(n,q)$, with $P_A=\langle P_A', \gamma \rangle$, $P_B=\langle P_B', \gamma \rangle$, $\pi_A=\langle \pi_A', \gamma \rangle$, $\pi_B=\langle \pi_B', \gamma \rangle$, $l=\langle l', \gamma \rangle$, $\alpha=\langle \alpha', \gamma \rangle$, $\rho_1=\langle \rho'_1, \gamma \rangle$ and $\rho_2=\langle \rho'_2, \gamma \rangle$.
    \item  If $\mathcal{T}$ is the set of $k'$-spaces of Proposition \ref{overzicht1}(iv), then $\mathcal{S}$ can be extended to a set $\mathcal{S}'$ of $k$-spaces pairwise intersecting in a $(k-2)$-space such that  $\mathcal{S}'$ contains three $k$-spaces that meet in a $(k-4)$-space with $\dim(\alpha)=k+2$. Hence, $|\mathcal{S}'|<f(k,q)$ and  so these sets $\mathcal{T}$ do not lead to large examples of $\mathcal{S}$.
    \item  $\mathcal{T}$ is the set of $k'$-spaces of Theorem \ref{thmnoconfigalgemeenk}(i): There exists a $k'$-space $\zeta'$ such that  $\mathcal{T}$ is the set of all $k'$-spaces that have a $(k'-1)$-space in common with $\zeta'$. Then $\mathcal{S}$ can be extended to example $(i)$ in Theorem \ref{thmnoconfigalgemeenk} with $\zeta=\langle \zeta', \gamma \rangle$.
    \item  $\mathcal{T}$ is the set of $k'$-spaces of Theorem \ref{thmnoconfigalgemeenk}(ii): There exists a $(k'+1)$-space $\nu'$ and a $(k'-2)$-space $\pi'\subset \nu$ such that $\mathcal{T}$ consists of all $k'$-spaces in $\nu'$, together with all $k'$-spaces through $\pi'$ that intersect $\nu'$ in at least a $(k'-1)$-space. Then $\mathcal{S}$ can be extended to example $(ii)$ in Theorem \ref{thmnoconfigalgemeenk} with $\nu=\langle \nu', \gamma \rangle$, $\pi=\langle \pi', \gamma \rangle$.
\end{enumerate}

If $k-g-1=2$, the set $\mathcal{T}$ is a set of planes in $\PG(n-k+2,q)$ pairwise intersecting in at least a point, i.e. an Erd\H{o}s-Ko-Rado set of planes. In \cite[Section 6]{kneser}, Blokhuis \emph{et al.} classified the maximal Erd{\H{o}}s-Ko-Rado sets $\mathcal{T}$ of planes in $\PG(5,q)$ with $|\mathcal{T}|\geq 3q^4+3q^3+2q^2+q+1$.  In \cite{EKRplanes}, M. De Boeck generalized these results and classified the largest examples of sets of planes pairwise intersecting in at least a point in $\PG(n,q), n\geq 5$. Below we retrace the examples in \cite{kneser}  and \cite{EKRplanes} with size at least $f(k,q)$ and such that there is no point contained in all their elements. For each example, we show that it can be extended to one of the examples discussed in the previous sections, or that it gives rise to a new maximal example.

\begin{enumerate}[a)]
    \item  $\mathcal{T}$ is the set of planes of Example $II$ in \cite{EKRplanes}: Consider a 3-space $\sigma$ and a point $P_0\in \sigma$. Let $\mathcal{T}$ be the set of all planes that either are contained in $\sigma$ or else intersect $\sigma$ in a line through $P_0$. Then $\mathcal{S}$ can be extended to example $(ii)$ in Theorem \ref{thmnoconfigalgemeenk}, with $\zeta$ the $(k+1)$-space spanned by $\sigma$ and $\gamma$, and $\pi_{AB}=\langle \gamma, P_0\rangle$.
    \item $\mathcal{T}$ is the set of planes of Example $III$ in \cite{EKRplanes}: Consider a plane $\pi$, then $\mathcal{T}$ is the set of planes meeting $\pi$ in at least a line. Then $\mathcal{S}$ can be extended to example $(i)$ in Theorem \ref{thmnoconfigalgemeenk} with $\zeta$ the $k$-space spanned by $\pi$ and $\gamma$.
    \item $\mathcal{T}$ is the set of planes of Example $IV$ in \cite{EKRplanes}: Consider a 4-space $\tau$, a plane $\delta \subset \tau$ and a point $P_0\in \delta$. Then $\mathcal{T}$ is the set containing the planes in $\tau$ intersecting $\delta$ in a line, the planes intersecting $\delta$ in a line through $P_0$ and the planes in $\tau$ through $P_0$.  Then we can refer to Subsection \ref{alphasolid} and so $\mathcal{S}$ can be extended to Example \ref{voorbeeldenalgemeen}$(iv)$, with $\rho=\langle \gamma,\tau \rangle$, $\alpha=\langle \gamma,\delta \rangle$ and $\pi=\langle \gamma,P_0 \rangle$.
    \item $\mathcal{T}$ is the set of planes of Example $V$ in \cite{EKRplanes}: Consider a 4-space $\tau$, and a line $l \subset \tau$. Then $\mathcal{T}$ is the set containing the planes through  $l$ and all planes in $\tau$ containing a point of $l$.  Then we can refer to Subsection \ref{alphaplane} and $\mathcal{S}$ can be extended to Example \ref{voorbeeldenalgemeen}$(v)$, with $\rho=\langle \gamma,\tau \rangle$ and $\alpha=\langle \gamma,l \rangle$.
    
    \item $\mathcal{T}$ is the set of planes of Example $VI$ in \cite{EKRplanes}: Let $\tau_1$ and $\tau_2$ be two $4$-spaces such that $\sigma=\tau_1 \cap \tau_2$ is a 3-space. Let $\pi_1$ and $\pi_2$ be two planes in $\sigma$ with intersection line $l_0$ and let $P_1$ and $P_2$ be two different points on $l_0$. Then $\mathcal{T}$ is the set of planes through $l_0$, the planes in $\sigma$, the planes in $\tau_1$ containing a line through $P_1$ in $\pi_1$ or a line through $P_2$ in $\pi_2$, and the planes in $\tau_2$ containing a line
through $P_1$ in $\pi_2$ or a line through $P_2$ in $\pi_1$.  Then by using Section \ref{alpha_4}, Case $1$, $\mathcal{S}$ can be extended to Example \ref{voorbeeldenalgemeen}$(vi)$ with $\rho_i=\langle \gamma,\tau_i \rangle$, $\alpha=\langle \gamma,\sigma \rangle$, $\pi_A=\langle \gamma,\pi_1 \rangle$, $\pi_B=\langle \gamma,\pi_2 \rangle$, $\lambda=\langle \gamma,l_0 \rangle$, $\lambda_A=\langle \gamma,P_1 \rangle$, $\lambda_B=\langle \gamma,P_2 \rangle$ and $P_{AB}$ a point in $\gamma$.

 \item $\mathcal{T}$ is the set of planes of Example $VII$ in \cite{EKRplanes}: Let $\rho$ be a $5$-space. Consider a line $l\subset \rho$ and a $3$-space $\sigma\subset \rho$
disjoint to $l$. Choose three points $P_1$, $P_2$, $P_3$ on $l$ and choose four non-coplanar
points $Q_1$, $Q_2$, $Q_3$, $Q_4$ in $\sigma$. Denote $l_1= Q_1Q_2$, $\bar{l}_1= Q_3Q_4$, $l_2= Q_1Q_3$, $\bar{l}_2= Q_2Q_4$, $l_3= Q_1Q_4$, and $\bar{l}_3= Q_2Q_3$. Then $\mathcal{T}$ is the set containing all planes through $l$ and all planes through $P_i$ in $\langle l,l_i \rangle$ or in $\langle l, \bar{l}_i \rangle$, $i=1,2,3$. Note that this set $\mathcal{S}$ is the set described in Example \ref{voorbeeldenalgemeen}$(ix)$. We can prove the following lemma.
\begin{lemma}\label{maxmaarten7}
The set $\mathcal{S}$ of $k$-spaces described in Example \ref{voorbeeldenalgemeen}$(ix)$ is  a maximal set of $k$-spaces pairwise intersecting in at least a $(k-2)$-space.
\end{lemma}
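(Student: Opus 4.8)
The plan is to separate the statement into two parts: first that $\mathcal{S}$ is a legitimate family (any two of its $k$-spaces meet in at least a $(k-2)$-space), and then that it is maximal. For the first part I would pass to the quotient geometry $\PG(n,q)/\gamma$. Since every element of $\mathcal{S}$ contains the $(k-3)$-space $\gamma$, a $k$-space $E\in\mathcal{S}$ corresponds to a plane $\bar E=E/\gamma$, and for $E,F\in\mathcal{S}$ one has $E\cap F=\langle\gamma,\bar E\cap\bar F\rangle$; hence $\dim(E\cap F)\ge k-2$ is equivalent to $\bar E\cap\bar F\neq\emptyset$. By construction the planes $\bar E$, $E\in\mathcal{S}$, are precisely the planes $\mathcal{T}$ of Example $VII$ in \cite{EKRplanes}, which is an Erd\H{o}s-Ko-Rado set of planes, so its members pairwise meet in at least a point. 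This yields the pairwise $(k-2)$-intersection property at once.

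For maximality, suppose $E\notin\mathcal{S}$ is a $k$-space meeting every element of $\mathcal{S}$ in at least a $(k-2)$-space. The heart of the argument is to prove that $\gamma\subseteq E$; once this is done the proof concludes quickly, since then $\gamma\subseteq E\cap F$ for every $F\in\mathcal{S}$, so in the quotient $\bar E$ meets every plane of $\mathcal{T}$ in at least a point. As $\mathcal{T}$ is a maximal Erd\H{o}s-Ko-Rado set of planes (\cite{EKRplanes}, see also \cite{kneser}), this forces $\bar E\in\mathcal{T}$, that is, $E\in\mathcal{S}$, a contradiction. Alternatively, if one prefers a self-contained argument, one checks directly that any plane meeting all planes of Example $VII$ in a point already belongs to it.

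To prove $\gamma\subseteq E$ I would exploit the type $0$ spaces, which form the complete pencil of all $k$-spaces through the $(k-1)$-space $L=\langle\gamma,l\rangle$. Running over all $M\supseteq L$ and using $E\cap M\supseteq E\cap L$, a short dimension count shows $\dim(E\cap L)\ge k-2$: indeed, if $\dim(E\cap L)=d\le k-3$, then the image of $E$ in $\PG(n,q)/L$ is a subspace of projective dimension $k-1-d$, and unless this image fills the whole quotient (i.e. $\langle E,L\rangle=\PG(n,q)$) one can choose $M\supseteq L$ with $E\cap M=E\cap L$ of dimension $d<k-2$, a contradiction. In the remaining degenerate spanning case, which can occur only for small $n$, I would instead use that all type $1,2,3$ spaces lie in the $(k+3)$-space $\langle\gamma,\rho\rangle$, so that $E$ is forced to meet $\langle\gamma,\rho\rangle$ in a large subspace, again giving $\dim(E\cap L)\ge k-2$. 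If $L\subseteq E$ we are done; otherwise $E\cap L$ is a hyperplane of $L$, and I would pin down that this hyperplane contains $\gamma$ by testing $E$ against suitable type $i$ spaces through the $(k-2)$-spaces $\langle\gamma,P_i\rangle$ and applying Grassmann's dimension property inside $\langle\gamma,\rho\rangle$.

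The main obstacle is exactly this containment $\gamma\subseteq E$: the type $0$ pencil alone only controls $E\cap L$ up to a hyperplane and leaves the degenerate spanning configuration open, so the delicate point is to combine the global pencil (type $0$) with the local configuration of the six families of type $i$ spaces in $\langle\gamma,\rho\rangle$ to force $\gamma$ into $E$. I expect the remaining verifications, namely the pairwise intersection check in the quotient and the final appeal to the planar classification, to be routine once this containment is established.
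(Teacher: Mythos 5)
Your overall architecture matches the paper's: the case $\gamma\subseteq E$ is handled by passing to the quotient and invoking the maximality of Example $VII$ of \cite{EKRplanes} as an Erd\H{o}s--Ko--Rado set of planes, and everything then hinges on excluding the case $\gamma\nsubseteq E$. Your use of the type-$0$ pencil to get $\dim(E\cap\langle\gamma,l\rangle)\geq k-2$ is sound (and in fact more explicit than the paper about the degenerate spanning case). However, the step you defer --- ``pin down that this hyperplane contains $\gamma$ by testing $E$ against suitable type $i$ spaces \dots and applying Grassmann's dimension property'' --- is not a routine verification: it is the entire content of the lemma, and you explicitly leave it open (``the main obstacle is exactly this containment''). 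As written, the proposal therefore has a genuine gap at its core, and nothing in your sketch engages with the specific feature of Example $(ix)$ that makes the argument work, namely the tetrahedron $Q_1,Q_2,Q_3,Q_4$.

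For comparison, here is how the paper closes that gap. Assume $\gamma\nsubseteq E$. The type-$0$ pencil forces $E\cap\langle\gamma,l\rangle$ to be a $(k-2)$-space not through $\gamma$, whence $\dim(E\cap\gamma)=k-4$. Next, for each of the six edges $l_i,\bar l_i$ of the tetrahedron, $\mathcal{S}$ contains \emph{all} $k$-spaces through $\langle\gamma,P_i\rangle$ inside the $(k+1)$-space $\langle\gamma,l,l_i\rangle$ (resp.\ $\langle\gamma,l,\bar l_i\rangle$); since the common intersection of all those $k$-spaces is exactly $\langle\gamma,P_i\rangle\supseteq\gamma$, the space $E$ must meet each of these six $(k+1)$-spaces in a $(k-1)$-space (a $(k-2)$-dimensional intersection would force $E\supseteq\langle\gamma,P_i\rangle\supseteq\gamma$). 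Passing to $\PG(n,q)/\gamma$, the space $E'=\langle\gamma,E\rangle/\gamma$ is then a solid through $l'$ containing a point of each of the six lines $Q_i'Q_j'$, $1\leq i<j\leq 4$. Since $l'$ is disjoint from the solid $\sigma'=\langle Q_1',\dots,Q_4'\rangle$, the intersection $E'\cap\sigma'$ is at most a line, and no line or point meets all six edges of a tetrahedron: this is the contradiction. Note that the conclusion is not reached by a local dimension count showing that the hyperplane $E\cap\langle\gamma,l\rangle$ contains $\gamma$; it is the non-coplanarity of $Q_1,\dots,Q_4$, a global feature of the configuration, that kills the case $\gamma\nsubseteq E$. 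Your sketch cannot be completed as stated without importing essentially this argument.
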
 
\begin{proof}
We have to prove that there exists no $k$-space $E$ in $\PG(n,q)$,  with $\gamma\nsubseteq E$ and so that $E$ meets all elements of $\mathcal{S}$ in at least a $(k-2)$-space. Suppose there exists such a $k$-space $E$. As $\mathcal{S}$ contains all $k$-spaces through the $(k-1)$-space $\langle \gamma,l \rangle$, $E$ contains a $(k-2)$-space $\pi_0$ of $\langle \gamma,l \rangle$, not through $\gamma$. Hence, $\dim(E \cap \gamma)=g-1=k-4$. As $\mathcal{S}$ contains all $k$-spaces through  $\langle \gamma, P_i \rangle$ in the $(k+1)$-space $\langle \gamma,l, l_i \rangle$ (or $\langle \gamma,l, \bar{l}_i \rangle$), $E$ contains a $(k-1)$-space of each of those $(k+1)$-spaces. Consider now the quotient space $\PG(n,q)/ \gamma$, and let $E'=\langle \gamma,E \rangle / \gamma$, $Q_i'=\langle Q_i, \gamma \rangle / \gamma$, $P_i'=\langle P_i, \gamma \rangle / \gamma$, and $l'=\langle l, \gamma \rangle / \gamma$. Then $E'$ is a solid in $\PG(n,q)/ \gamma$ through $l'$ that contains a point of each of the lines $Q'_iQ'_j$, $1\leq i<j\leq 4 $, but this gives a contradiction as $\dim(E')=3$. 
\end{proof}

\item $\mathcal{T}$ is the set of planes of Example $VIII$ in $\PG(n-k+2,q)$ in \cite{EKRplanes}: Consider two solids $\sigma_1$ and $\sigma_2$, intersecting in a line $l$. Take the points $P_1$ and $P_2$ on $l$.  Then $\mathcal{T}$ is the set containing all planes through $l$, all planes through $P_1$ that contain a line in $\sigma_1$ and a line in $\sigma_2$, and all planes through $P_2$ in $\sigma_1$ of $\sigma_2$. Note that this set $\mathcal{S}$ is the set described in Example\ref{voorbeeldenalgemeen}$(viii)$. We  we can prove that the set $\mathcal{S}$ of $k$-spaces is not extendable.

\begin{lemma}\label{maxmaarten8}
The set $\mathcal{S}$ of $k$-spaces described in Example \ref{voorbeeldenalgemeen}$(vii)$ is a maximal set of $k$-spaces pairwise intersecting in at least a $(k-2)$-space.
\end{lemma}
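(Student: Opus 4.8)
The plan is to follow the strategy of the proof of Lemma~\ref{maxmaarten7}: pass to the quotient space $\PG(n,q)/\gamma$ and derive a contradiction from the fact that the image of any space extending $\mathcal{S}$ would be a solid forced to meet every plane of De~Boeck's Example~$VIII$ in at least a line. First I would reduce to an extending $k$-space $E$ with $\gamma\nsubseteq E$: if $\gamma\subseteq E$, then $E/\gamma$ is a plane meeting every plane of the planar configuration $\mathcal{T}$ in at least a point, so the maximality of Example~$VIII$ as an Erd\H{o}s--Ko--Rado set of planes forces $E/\gamma\in\mathcal{T}$, i.e.\ $E\in\mathcal{S}$. So assume $\gamma\nsubseteq E$ and that $E$ meets every element of $\mathcal{S}$ in at least a $(k-2)$-space.

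Using the type~$1$ elements (all $k$-spaces through the $(k-1)$-space $\langle\gamma,l\rangle$) I would argue, exactly as in the quotient--pencil computation of Lemma~\ref{maxmaarten7}, that $E$ contains a $(k-2)$-space $\pi_0$ of $\langle\gamma,l\rangle$ not through $\gamma$, whence $\dim(E\cap\gamma)=k-4$ and $\langle E,\gamma\rangle$ is a $(k+1)$-space. Writing $E'=\langle E,\gamma\rangle/\gamma$, $l'=\langle\gamma,l\rangle/\gamma$, $\sigma_i'=\langle\gamma,\sigma_i\rangle/\gamma$ and $P_i'=\langle\gamma,P_i\rangle/\gamma$, the space $E'$ is a solid containing $l'$. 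A Grassmann computation shows that for every $D\in\mathcal{S}$ the hypothesis $\dim(E\cap D)\ge k-2$ descends to $\dim(E'\cap D')\ge 1$; that is, $E'$ meets every plane $D'$ of Example~$VIII$ in at least a line. All of $l',\sigma_1',\sigma_2',P_1',P_2'$ lives in $\rho'=\langle\sigma_1',\sigma_2'\rangle$, and since $\sigma_1'\cap\sigma_2'=l'$ we have $\dim\rho'=5$.

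Next I would extract the local structure of $E'$ from the type~$3$ planes. Setting $W_i=E'\cap\sigma_i'\supseteq l'$, the requirement that $E'$ meet every plane of $\sigma_i'$ through $P_2'$ in a line forces $\dim W_i\ge 2$ (if $W_i=l'$, a plane of $\sigma_i'$ through $P_2'$ avoiding $l'$ would meet $W_i$ in only the point $P_2'$), while $W_i=\sigma_i'$ is impossible since it would give $E'\cap\sigma_{3-i}'=l'$ and break the type~$3$ condition on the other solid. Hence each $W_i$ is a plane of $\sigma_i'$ through $l'$, and since $W_1\cap W_2\subseteq\sigma_1'\cap\sigma_2'=l'$ one gets $E'=\langle W_1,W_2\rangle$. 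The contradiction then comes from the type~$2$ planes: choose a line $a\subset\sigma_1'$ through $P_1'$ with $a\nsubseteq W_1$ and a line $b\subset\sigma_2'$ through $P_1'$ with $b\nsubseteq W_2$; then $\pi=\langle a,b\rangle$ is a genuine type~$2$ plane, but $\langle E',\pi\rangle=\langle\langle W_1,a\rangle,\langle W_2,b\rangle\rangle=\langle\sigma_1',\sigma_2'\rangle=\rho'$, so $\dim(E'\cap\pi)=3+2-5=0$ and $E'$ meets $\pi$ in only the point $P_1'$, contradicting that it must meet $\pi$ in a line.

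The main obstacle is the bookkeeping in the quotient: correctly verifying that the $(k-2)$-meeting hypothesis on $E$ descends to the line condition on the solid $E'$, and then organising the two families of planes so that type~$3$ pins $E'$ down to $\langle W_1,W_2\rangle$ while type~$2$ produces a plane met in only a point. Once this dimension count inside the $5$-space $\rho'$ is set up, the contradiction is immediate.
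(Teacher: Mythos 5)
Your proposal is correct and follows essentially the same route as the paper's proof: quotient by $\gamma$, use the type~1 pencil to force $\dim(E\cap\gamma)=k-4$ so that $E'=\langle E,\gamma\rangle/\gamma$ is a solid through $l'$ meeting each $\sigma_i'$ in a plane through $l'$, and then exhibit a type~2 plane $\langle a,b\rangle$ met by $E'$ in only the point $P_1'$ via the dimension count in the $5$-space $\langle\sigma_1',\sigma_2'\rangle$. The only differences are presentational: you make explicit the reduction of the case $\gamma\subseteq E$ to De~Boeck's maximality of Example~$VIII$ and the descent of the $(k-2)$-intersection hypothesis to the line condition on $E'$, both of which the paper leaves implicit.
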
 
\begin{proof}
We have to prove that there exist no $k$-space $E$ in $\PG(n,q)$,  with $\gamma\nsubseteq E$ and so that $E$ meets all elements of $\mathcal{S}$ in at least a $(k-2)$-space. Suppose there exists such a $k$-space $E$. As $\mathcal{S}$ contains all $k$-spaces through the $(k-1)$-space $\langle \gamma,l \rangle$, $E$ contains a $(k-2)$-space $\pi_0$ of  $\langle \gamma,l \rangle$, not through $\gamma$. Hence $\dim( \gamma \cap E)=k-4$. As $\mathcal{S}$ contains all $k$-spaces through $\langle \gamma, P_2 \rangle$ in the $(k+1)$-space $\langle \gamma,\sigma_1 \rangle$ (or $\langle \gamma,\sigma_2 \rangle$), $E$ contains a $(k-1)$-space of each of those $(k+1)$-spaces. These two $(k-1)$-spaces, $\alpha_1$ and $\alpha_2$ respectively, span $E$ and meet in a $(k-2)$-space $\pi_0$. Then we show that there exists a $k$-space $A \in \mathcal{S}$, containing $\gamma$, that meets $E$ in precisely a $(k-3)$-space. Consider the quotient space $\PG(n,q)/ \gamma$, and let $E'=\langle \gamma,E \rangle / \gamma$, $\sigma_i'=\langle \sigma_i, \gamma \rangle / \gamma$, $P_i'=\langle P_i, \gamma \rangle / \gamma$, $A'=\langle A, \gamma \rangle / \gamma$ and $l'=\langle l, \gamma \rangle / \gamma=\langle \pi_0, \gamma \rangle / \gamma$. Then $E'$ is a solid in $\PG(n,q)/ \gamma$ through $l'$ that contains planes $\alpha_1'$, $\alpha_2'$ in $\sigma_1'$ and $\sigma_2'$ respectively. Note that $\alpha_1' \cap \alpha_2'=l'$. Let $l_1\in \sigma_1'$ and $l_2 \in \sigma_2'$ be two lines containing $P'_1$ so that $ l_1 \cap \alpha'_1=l_2  \cap \alpha'_2=P_1'$, and let $A'$ be the plane spanned by $l_1$ and $l_2$. Then $E' \cap A'$ is a point in $\PG(n,q)/ \gamma$. Since $\gamma\subseteq A$ and $\gamma\nsubseteq E$ we find that $E \cap A$ is a $(k-3)$-space of $\langle \gamma, P_1 \rangle$ in $\PG(n,q)$, and so these elements of $\mathcal{S}$ meet in a $(k-3)$-space, a contradiction. 
\end{proof}

\item $\mathcal{T}$ is the set of planes of Example $IX$ in $\PG(n-k+2,q)$ in \cite{EKRplanes}: Let $l$ be a line and $\sigma$ a solid skew to $l$. Denote $\langle l, \sigma \rangle$ by  $\rho$. Let $P_1$ and $P_2$ be two points on $l$ and let $\mathcal{R}_1$ and $\mathcal{R}_2$ be a regulus and its opposite regulus in $\sigma$.
Then $\mathcal{T}$ is the set containing all planes through $l$, all planes through $P_1$ in the solid generated by $l$ and a line of $\mathcal{R}_1$, and all planes through $P_2$ in the solid generated by $l$ and a line of $\mathcal{R}_2$. Note that this set $\mathcal{S}$ is the set described in Example \ref{voorbeeldenalgemeen}$(viii)$. We can prove the following lemma. 

\begin{lemma}\label{maxmaarten9}
The set $\mathcal{S}$ of $k$-spaces  described in Example \ref{voorbeeldenalgemeen}$(viii)$  is a maximal set of $k$-spaces pairwise intersecting in at least a $(k-2)$-space.
\end{lemma}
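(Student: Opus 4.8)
The plan is to follow the template of Lemmas \ref{maxmaarten7} and \ref{maxmaarten8}: maximality among the $k$-spaces that contain $\gamma$ is inherited from the maximality of the corresponding Erd\H{o}s--Ko--Rado set of planes (Example $IX$ of \cite{EKRplanes}), so it suffices to rule out a $k$-space $E$ with $\gamma\nsubseteq E$ meeting every element of $\mathcal{S}$ in at least a $(k-2)$-space. First I would exploit that $\mathcal{S}$ contains all $k$-spaces through the $(k-1)$-space $\langle\gamma,l\rangle$: exactly as in Lemma \ref{maxmaarten8}, this forces $E$ to contain a $(k-2)$-space $\pi_0\subseteq\langle\gamma,l\rangle$ not through $\gamma$, whence $\dim(E\cap\gamma)=k-4$. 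Passing to the quotient $\PG(n,q)/\gamma$ and writing $E'=\langle\gamma,E\rangle/\gamma$, this makes $E'$ a solid containing the line $l'=\langle\gamma,l\rangle/\gamma$, sitting inside the $5$-space $\rho'=\langle l',\sigma'\rangle$, where $\sigma'=\langle\gamma,\sigma\rangle/\gamma$ is the solid carrying the opposite reguli $\mathcal{R}_1',\mathcal{R}_2'$ and $P_1',P_2'$ are the images of $\langle\gamma,P_1\rangle,\langle\gamma,P_2\rangle$.

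The core step is to turn the remaining incidences into a statement about transversals of a regulus. For each line $m\in\mathcal{R}_1$, the set $\mathcal{S}$ contains all $k$-spaces through $\langle\gamma,P_1\rangle$ inside the $(k+1)$-space $V_m=\langle\gamma,l,m\rangle$; since $\langle\gamma,P_1\rangle\nsubseteq E$ (as $\gamma\nsubseteq E$), a short dimension count shows that $E$ meeting all of them in a $(k-2)$-space forces $\dim(E\cap V_m)\ge k-1$: otherwise $E\cap V_m$ would be a $(k-2)$-space not containing $\langle\gamma,P_1\rangle$, hence missed (in dimension $k-2$) by the $q^2$ members of the family that do not contain the $(k-1)$-space $\langle E\cap V_m,\gamma,P_1\rangle$. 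In the quotient this says precisely that $E'$ meets the solid $\langle l',m'\rangle$ in at least a plane through $l'$, so projecting from $l'$ into $\rho'/l'\cong\PG(3,q)$ the line $\overline{E'}=E'/l'$ meets the line $\overline{m}=\langle l',m'\rangle/l'$. Running this over all $m\in\mathcal{R}_1$ (type $2$) and all $m\in\mathcal{R}_2$ (type $3$) shows that $\overline{E'}$ meets every line of the two opposite reguli $\overline{\mathcal{R}_1},\overline{\mathcal{R}_2}$ in $\PG(3,q)$; and since $\overline{E'}$ meets two skew lines of $\overline{\mathcal{R}_1}$ in two distinct points, $\overline{E'}$ genuinely lies in $\PG(3,q)$.

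I would then close with the classical regulus obstruction: a line meeting all $q+1$ lines of a regulus belongs to its opposite regulus, so $\overline{E'}\in\overline{\mathcal{R}_2}$; but a line of $\overline{\mathcal{R}_2}$ is skew to the other $q$ lines of $\overline{\mathcal{R}_2}$, contradicting that $\overline{E'}$ must meet all of $\overline{\mathcal{R}_2}$ (here $q\ge2$). Hence no such $E$ exists and $\mathcal{S}$ is maximal. The main obstacle I expect is the quotient bookkeeping — verifying $\dim(E\cap\gamma)=k-4$, checking that the forced $(k-1)$-space in $V_m$ really corresponds to a plane of $E'$ through $l'$ meeting $\langle l',m'\rangle$, and confirming that $\overline{E'}$ is a bona fide line of the relevant $\PG(3,q)$; once these are in place the regulus step is immediate. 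Equivalently, the contradiction can be phrased in the style of Lemma \ref{maxmaarten8} without naming reguli: one exhibits a single line $m\in\mathcal{R}_1\cup\mathcal{R}_2$ whose solid $\langle l',m'\rangle$ meets $E'$ only in $l'$, takes a $k$-space $A\in\mathcal{S}$ of type $2$ or $3$ inside $V_m$ with $E'\cap A'$ a single point, and concludes $\dim(E\cap A)\le k-3$, contradicting that $E$ meets $A$ in a $(k-2)$-space.
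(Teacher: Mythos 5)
Your proposal is correct and follows essentially the same route as the paper: reduce to a putative $k$-space $E$ with $\gamma\nsubseteq E$, use the type-1 elements to get $\dim(E\cap\gamma)=k-4$, use the type-2 and type-3 elements to force $E$ to meet each $(k+1)$-space $\langle\gamma,l,m\rangle$ in a $(k-1)$-space, and then derive the contradiction in the quotient $\PG(n,q)/\gamma$ from the fact that the solid $E'$ through $l'$ would have to be transversal to all $2(q+1)$ lines of the two opposite reguli. Your projection from $l'$ is the same computation as the paper's intersection $E'\cap\sigma'$, and you merely make explicit the dimension count and the regulus obstruction that the paper leaves terse.
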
 
\begin{proof}
We have to prove that there exists no $k$-space $E$ in $\PG(n,q)$,  with $\gamma\nsubseteq E$, and so that $E$ meets all elements of $\mathcal{S}$ in at least a $(k-2)$-space. Suppose there exists such a $k$-space $E$. Let $\mathcal{R}_1=\{l_1,l_2,\dots, l_{q+1}\}$ and $\mathcal{R}_2=\{\bar{l}_1,\bar{l}_2,\dots, \bar{l}_{q+1}\}$. As $\mathcal{S}$ contains all $k$-spaces through the $(k-1)$-space $\langle \gamma,l \rangle$, $E$ contains a $(k-2)$-space $\pi_0$ of  $\langle \gamma,l \rangle$, not through $\gamma$. Hence, $\dim(\gamma\cap E)=k-4$. As $\mathcal{S}$ contains all $k$-spaces through $\langle \gamma, P_i \rangle$ in the $(k+1)$-spaces $\langle \gamma,l, l_i \rangle$ (or $\langle \gamma,l, \bar{l}_i \rangle$), $E$ contains a $(k-1)$-space of each of those $(k+1)$-spaces.  \textcolor{black}{ Consider now the quotient space $\PG(n,q)/ \gamma$, and let $E'=\langle \gamma,E \rangle / \gamma$, $l_i'=\langle l_i, \gamma \rangle / \gamma$, $\bar{l_i'}=\langle \bar{l}_i, \gamma \rangle / \gamma$, $P_i'=\langle P_i, \gamma \rangle / \gamma$, and $l'=\langle l, \gamma \rangle / \gamma=\langle \pi_0, \gamma \rangle / \gamma$. Then $E'$ is a solid in $\PG(n,q)/ \gamma$ through $l'$ that contains a point of each of the lines $l'_i$ and $\bar{l}'_i$, $1\leq i\leq q+1 $, but this gives a contradiction as $\dim(E')=3$. }
\end{proof}

\end{enumerate}

We see that example $(f),(g)$ and $(h)$ give rise to maximal examples of sets $\mathcal{S}$ of $k$-spaces pairwise intersecting in at least a $(k-2)$-space, described in Example \ref{voorbeeldenalgemeen}$(ix),(vii), (viii)$ respectively. From \cite{EKRplanes}, it follows that the number of elements in $\mathcal{S}$ equals $\theta_{n-k}+6q^2$, $\theta_{n-k}+q^4+2q^3+3q^2$ and $\theta_{n-k}+2q^3+2q^2$ respectively.\\

Finally, if $k-g-1=1$, then $g=k-2$ and so, there is a $(k-2)$-space contained in all solids of $\mathcal{S}$. This case gives rise to Example \ref{voorbeeldenalgemeen}$(i)$.






\section{Main Theorem}

By collecting the results from Propositions \ref{overzicht1}, Theorem \ref{thmnoconfigalgemeenk} and Section \ref{sectionsolidpointk}, we find the following result.

\begin{mtheorem}\label{overzicht2}
Let $\mathcal{S}$ be a maximal set of $k$-spaces pairwise intersecting in at least a $(k-2)$-space in $\PG(n,q)$, $n\geq 2k$, $k\geq3$ . 
Let \begin{align*}
    f(k,q)=\begin{cases}
    3q^4+6q^3+5q^2+q+1 \ \text{    if } k=3, q\geq 2 \text{ or } k=4, q=2 \\
    \theta_{k+1}+q^4+2q^3+3q^2 \ \ \text{    else. }
    \end{cases}
\end{align*}
If $|\mathcal{S}|> f(k,q)$, then $\mathcal{S}$ is one of the  families described in Example \ref{voorbeeldenalgemeen}. 
 Note that for $n>2k+1$, the examples $(i)-(ix)$ are stated in decreasing order of the sizes.
\end{mtheorem}
\begin{proof}
\begin{itemize}
    \item [-] If there is no point contained in all elements of $\mathcal{S}$ and $\mathcal{S}$ contains three $k$-spaces $A,B,C$ with $\dim(A\cap B\cap C)=k-4$, then we distinguished the possibilities for $\mathcal{S}$ depending on the dimension of $\alpha=\langle D\cap \langle A,B \rangle \,|\, D\in \mathcal{S'} \rangle$, where $\mathcal{S'}=\{D \in \mathcal{S} \,|\, D \not \subset \langle A,B \rangle\}$, see Section \ref{confi}. By Proposition \ref{overzicht1}, it follows that $\mathcal{S}$ is one of the examples $ (iv),(v),(vi),(x)$ in Example \ref{voorbeeldenalgemeen}.
    \item [-] If there is no point contained in all elements of $\mathcal{S}$ and if for every three elements $A,B,C$ in $\mathcal{S}$, we have that $\dim(A\cap B\cap C)\geq k-3$, then the only possibilities for $\mathcal{S}$ are described in Example \ref{voorbeeldenalgemeen} $(ii)$ and $(iii)$, see Theorem \ref{thmnoconfigalgemeenk}. 
    \item [-] If there is at least a point contained in all $k$-spaces of $\mathcal{S}$, then we refer to Section \ref{sectionsolidpointk}. Let $\gamma$ be the maximal subspace contained in all $k$-spaces of $\mathcal{S}$, with $\dim(\gamma)=g$. Then $\mathcal{T}=\{D/\gamma \,|\,D \in \mathcal{S}\}$ is a set of  $(k-g-1)$-spaces of $\PG(n-g-1,q) \simeq \PG(n,q)/\gamma$  pairwise intersecting in at least a $(k-g-3)$-space. 
    The only examples of sets $\mathcal{T}$ that give rise to maximal examples of sets of $k$-spaces are described in Section \ref{sectionsolidpointk} in the examples $(f),(g),(h)$ and when $g=k-3$. They correspond to Example \ref{voorbeeldenalgemeen}$(i),(ix),(vii),(viii)$.
\end{itemize}
\end{proof}

\section*{Acknowledgements}

The work of Ago-Erik Riet was partially supported by Estonian Research Council through research grants PSG114 and IUT20-57.\\
The work of Giovanni Longobardi is supported by the Research Project of MIUR (Italian Office for University and Research) "Strutture geometriche, Combinatoria e loro Applicazioni", 2012.\\
Both would like to thank Ghent University for their hospitality, as part of the work was carried out there.\\
The research of Jozefien D’haeseleer is supported by the FWO (Research Foundation Flanders).

\end{document}